\newcommand{\nucleo}{\mathbf R}
\newcommand{\gen}{\mathbf{g}}
\newcommand{\Gb}{\mathbf G}
\newcommand{\Hb}{\mathbf H}
\newcommand{\Bb}{\mathbf B}
\newcommand{\verma}{M}
\numberwithin{equation}{section}
\newtheorem{theorem}{Theorem}[section]
\newtheorem{lemma}[theorem]{Lemma}
\newtheorem{coro}[theorem]{Corollary}
\newtheorem{prop}[theorem]{Proposition}
\theoremstyle{definition}
\newtheorem{definition}[theorem]{Definition}
\theoremstyle{remark}
\newtheorem{remark}[theorem]{Remark}
\newcommand{\pf}{\begin{proof}}
\newcommand{\epf}{\end{proof}}
\newcommand{\spl}{\mathfrak{sl}}
\newcommand{\ku}{ \Bbbk}
\newcommand{\fp}{\mathbb F_p}
\newcommand{\kut}{ \ku^{\times}}
\newcommand{\x}{\mathtt{x}}
\newcommand{\yt}{\mathtt{y}}
\newcommand{\wtoba}{\widetilde{\toba}}
\newcommand{\ttoba}{\widetilde{\mathfrak B}}
\newcommand{\I}{\mathbb I}
\newcommand{\N}{\mathbb N}
\newcommand{\Z}{\mathbb Z}
\newcommand{\sx}{\mathsf{x}}
\newcommand{\sy}{\mathsf{y}}
\newcommand{\cA}{\mathcal{A}}
\newcommand{\cO}{\mathcal{O}}
\newcommand{\cE}{\mathcal{E}}
\newcommand{\D}{\mathcal{D}}
\newcommand{\cI}{\mathcal{I}}
\newcommand{\Pc}{{\mathcal P}}
\newcommand{\cR}{\mathcal{R}}
\newcommand{\Ss}{{\mathcal S}}
\newcommand{\cV}{\mathcal{V}}
\newcommand{\Alg}{\Hom_{\text{alg}}}
\newcommand{\Aut}{\operatorname{Aut}}
\newcommand{\car}{\operatorname{char}}
\newcommand{\id}{\operatorname{id}}
\newcommand{\gr}{\operatorname{gr}}
\newcommand{\GK}{\operatorname{GKdim}}
\newcommand{\Hom}{\operatorname{Hom}}
\newcommand{\Irr}{\operatorname{Irrep}}
\newcommand{\Ind}{\operatorname{Ind}}
\newcommand{\ydk}{{}^{K}_{K}\mathcal{YD}}
\newcommand{\toba}{\mathscr{B}}
\newcommand{\ot}{\otimes}
\newcommand{\ydG}{{}^{\ku \Gamma }_{\ku \Gamma }\mathcal{YD}}
\newcommand{\ydGd}{{}^{\ku^\Gamma }_{\ku^\Gamma }\mathcal{YD}}
\newcommand{\ydtildeGd}{{}^{\ku[\zeta]}_{\ku[\zeta]}\mathcal{YD}}
\newcommand{\ydtildeG}{{}^{\ku \widetilde{\Gamma} }_{\ku \widetilde{\Gamma} }\mathcal{YD}}
\DeclareRobustCommand{\stirling}{\genfrac []{0pt}{}}
\newcommand{\rightarrowdbl}{\rightarrow\mathrel{\mkern-14mu}\rightarrow}
\newcommand{\xrightarrowdbl}[2][]{%
\xrightarrow[#1]{#2}\mathrel{\mkern-14mu}\rightarrow
}
\newcounter{tabla}\stepcounter{tabla}
\begin{document}

\title[On the restricted Jordan plane]
{On the restricted Jordan plane in odd characteristic}

\author[Nicol\'as Andruskiewitsch and H\'ector Pe\~na Pollastri]
{Nicol\'as Andruskiewitsch and H\'ector Pe\~na Pollastri}

\thanks{The work of N. A. and H. P. P. was partially supported by CONICET and Secyt (UNC)}

\address{ Facultad de Matem\'atica, Astronom\'ia y F\'isica,
Universidad Nacional de C\'ordoba. CIEM -- CONICET. 
Medina Allende s/n (5000) Ciudad Universitaria, C\'ordoba, Argentina}
\email{andrus|hpollastri@famaf.unc.edu.ar}

\begin{abstract}
In positive characteristic the Jordan plane covers a finite-dimensional Nichols algebra that  was described by Cibils, Lauve and Witherspoon 
and we call the restricted Jordan plane.  In this paper  the  characteristic is odd. 
The defining relations of the Drinfeld double of the restricted Jordan  plane are presented and its simple modules are determined.
A Hopf algebra that deserves the name of double of the Jordan plane is introduced and various quantum Frobenius maps are described. 
The finite-dimensional pre-Nichols algebras intermediate between the Jordan plane and its restricted version are classified.
The defining relations of the graded dual of the Jordan plane are given.
\end{abstract} 
\maketitle

\setcounter{tocdepth}{1}
\tableofcontents

\section*{Introduction}
The Jordan plane is a well-known example of a quadratic algebra. It is also a cornerstone in the study of Nichols algebras over abelian groups with finite
Gelfand-Kirillov dimension \cite{aah-triang,AAH-jordan}. 

Let $\ku$ be an algebraically closed field of characteristic $p > 2$ and let $\fp$ be the field of $p$ elements.
Let $\Gamma \simeq \Z/p\Z$ be a cyclic group with a generator $g$, 
written multiplicatively. 
Let $V$ be a Yetter-Drinfeld module over $\ku\Gamma$ with a basis $\{x,y\}$, action $\rightharpoonup$ and coaction $\delta$ given by
\begin{align}\label{Eq:V_as_YD_module}
g\rightharpoonup x &= x, & g\rightharpoonup y &= y+x,&
\delta(x) &= g\ot x,& \delta(y) &= g\ot y.
\end{align}
Thus $V$ is a braided vector space with braiding 
\begin{align*}
c_V(x\ot x) &= x\ot x, & c_V(x\ot y) &= (y+x)\ot x, \\
c_V(y\ot x) &= x \ot y, & c_V(y\ot y) &= (y+x)\ot y.
\end{align*}
By \cite[Theorem 3.5]{clw}, the Nichols algebra $\toba(V)$--that we shall call the \emph{restricted} Jordan plane--is the quotient of $T(V)$ by the ideal generated by
\begin{align*}
x^p, && y^p, && yx-xy+\frac{1}{2}x^2.
\end{align*}
The family $(x^i y^j)_{0 \leq i,j \leq p-1}$ is a basis of $\toba(V)$ that has dimension $p^2$.
The liftings of $\toba(V)$ have been computed in \cite{clw} and the simple modules of these in \cite{ZC}.
The restricted Jordan plane is the starting point to the description of new examples of finite-dimensional Hopf algebras \cite{aah-findim}.
We consider in this paper various Hopf algebras related to $\toba(V)$.

\begin{enumerate}[leftmargin=-3.5pt]
\item Let $D(H)$ be the Drinfeld double of the bosonization $H = \toba(V)\#\ku \Gamma$.
We present the defining relations of $D(H)$ and show that it fits into an exact sequence
$\nucleo \hookrightarrow D(H) \twoheadrightarrow \mathfrak u(\spl_2(\ku))$
where $\nucleo$ is a local commutative Hopf algebra and $\mathfrak u(\spl_2(\ku))$
is the restricted enveloping algebra. We conclude that the simple $D(H)$-modules are the same as those of $\mathfrak u(\spl_2(\ku))$
and we present them as quotients of Verma modules.
See Propositions \ref{prop:double-H}, \ref{prop:double-H-ext} and \ref{prop:irrep-D(H)}, and Theorem \ref{th:irrep-restricted-jordan}.

\medbreak
\item The Jordan plane covers $\toba(V)$. We define a Hopf algebra $\widetilde{D}$ that covers the Drinfeld double $D(H)$. 
The definition of $\widetilde{D}$ makes sense in any characteristic $\neq 2$; 
$\widetilde{D}$ can be thought of the Drinfeld double of the Jordan plane and the map $\widetilde{D} \twoheadrightarrow D(H)$
as a quantum Frobenius map. Indeed let us consider the algebraic groups 
\begin{align*}
\Gb &= (\Gb_a \times \Gb_a) \rtimes \Gb_m, &
\Bb = \left((\Gb_a \times \Gb_a) \rtimes \Gb_m\right) \times \Hb_3
\end{align*}
with suitable semidirect products and where $\Hb_3$ is the Heisenberg group of dimension 3. See Remark \ref{rem:algebraic-group-normal-hopfsubalgebra}
and \eqref{eq:alggroup-B}.
Then there is a short exact sequence of Hopf algebra maps $\cO(\Bb) \hookrightarrow\widetilde{D} \twoheadrightarrow D(H)$
that fits into a commutative diagram
\begin{align}\label{eq:diagram-exact sequences}
\begin{aligned}
\xymatrix{ & \cO(\Gb)  \ar@{^{(}->}[r] \ar@{^{(}->}[d]_{\operatorname{Fr}} & \cO(\Bb) \ar@{^{(}->}[d] \ar@{->>}[r] & \cO(\Gb_a^3) \ar@{^{(}->}[d]
\\
(\star) & \cO(\Gb)  \ar@{^{(}->}[r] \ar@{->>}[d] & \widetilde{D} \ar@{->>}[r] \ar@{->>}[d] & U(\spl_2(\ku)) \ar@{->>}[d]
\\
& \nucleo \ar@{^{(}->}[r] & D(H) \ar@{->>}[r] & \mathfrak u(\spl_2(\ku))
}
\end{aligned}
\end{align}
where all columns and rows are exact sequences. Notice that the exact sequence in the middle row $(\star)$ is available also in characteristic 0. 
See Propositions \ref{prop:Dtilde}, \ref{prop:another-exseq} and \ref{prop:diagram-exact sequences}. 
The algebra $\widetilde{D}$ is PI and a noetherian domain,
see Proposition \ref{prop:ringtheoretical}.

\medbreak 
\item  We classify the  finite-dimensional pre-Nichols algebras intermediate between the Jordan plane and the  restricted Jordan plane.
Precisely, any such finite-dimensional pre-Nichols algebra is isomorphic (as braided Hopf algebra) to 
\begin{align*}
\mathcal{G}(k,\ell)&\coloneqq \wtoba/(y^{p^k}, x^{p^\ell})
\end{align*}
for unique $k, \ell \in \N$.
See Theorem \ref{thm;prenichols}.
We do not know if any finite-dimensional pre-Nichols algebra of $\toba(V)$ is like this. For instance, it is clear that
\begin{align*}
\ku \left\langle x, y|x^p, \quad y^p, \quad (yx-xy+\tfrac{1}{2}x^2)^{p^n}
\right\rangle
\end{align*}
is a pre-Nichols algebra of $\toba(V)$ but we do not know whether it has finite dimension or finite $\GK$.
We also provide new examples of Hopf algebras with finite $\GK$  by bosonization with $\ku\Gamma$. See Corollary \ref{coro:pre-Nichols-bosonization}.

\medbreak
\item We give the generators and defining relations of the graded dual $\cE$ of the Jordan plane. 
See Proposition \ref{prop:graded-dual}. Theorem \ref{thm;prenichols} implies that any 
finite-dimensional post-Nichols algebra of $(V, c_V^{-1})$  contained in $\cE$ is isomorphic (as braided Hopf algebra) to 
$\mathfrak{G}(k,\ell) = \mathcal{G}(k,\ell)^*$ for unique $k, \ell \in \N$.
We also give the generators and defining relations of $\mathfrak{G}(k,\ell)$.
\end{enumerate}

\subsection*{Conventions}
If $\ell < n \in\N_0$, then we set $\I_{\ell, n}=\{\ell, \ell +1,\dots,n\}$, $\I_n = \I_{1, n}$. 
Let $K$ be a Hopf algebra. The space of primitive elements of $K$ is denoted by $\Pc(K)$ and the antipode by $\Ss$ or by $\Ss_K$.
The category of Yetter-Drinfeld modules over $K$ is denoted $\ydk$.

Let $A$ be an algebra. 
The set of isomorphism classes of finite-dimensional simple modules over $A$
is denoted $\Irr A$. We usually denote indistinctly a class in $\Irr A$ and one of its representatives. An element
$x\in A$ is normal if $Ax = xA$. 
If $B$ is a subalgebra of $A$,
then $\Ind_{B}^{A}$ denotes the induction functor $M \mapsto A \otimes_B M$.

The algebra of regular functions on an (affine) algebraic group $G$ is denoted $\cO(G)$.
As usual, $\Gb_a$ is the additive algebraic group $(\ku,+)$ and $\Gb_m$ is the multiplicative algebraic group $(\kut,\cdot)$.

We recall that the unsigned Stirling numbers $\stirling{n}{k}$ are defined as the coefficients of the `raising factorial' polynomial: 
\begin{align*}
[X]^{[n]} = \prod_{i=1}^{n} (X+i-1) = \sum_{k=0}^n \stirling{n}{k} X^k \in \Z[X].
\end{align*}

\section{The double of the restricted Jordan plane}

\subsection{The double}
Here we present by generators and relations the Drinfeld double $D(H)$ that clearly has dimension $p^6$.

\subsubsection{The bosonization of the restricted Jordan plane}
By \cite[Corollary 3.14]{clw}, the bosonization $H = \toba(V)\#\ku\Gamma$ is the $p^3$-dimensional pointed Hopf algebra generated by $x$, $y$ and $g$ with relations
\begin{align}\label{eq:relations-H}
\begin{aligned}
g^p = 1, && gx = xg , && gy = yg+xg,\\
x^p = 0, && y^p = 0, && yx = xy - \frac{1}{2} x^2.
\end{aligned} 
\end{align}
The coproduct is determined by
\begin{align}\label{eq:hopf-structure-double-H-1}
\Delta(g) = g\ot g, && \Delta(x)= x\ot 1 + g\ot x, && \Delta(y) = y\ot 1 + g\ot y.
\end{align}

We shall need the following formulas that hold in $H$:
\begin{align}\label{jordan-relations-between-monomials}
&\begin{aligned}
g^{n}\,y^\ell &= \sum_{k=0}^{\ell} \binom{\ell}{k} (-1)^k\frac{[-2n]^{[k]}}{2^k} \,x^k y^{\ell-k}\,g^n,\\
y^\ell\,x^n &= \sum_{k=0}^{\ell} \binom{\ell}{k}(-1)^{k} \frac{[n]^{[k]}}{2^{k}} x^{n+k}\,y^{\ell-k},
\end{aligned}
& n,\ell&\in\N_0,\end{align}
where $[t]^{[k]}$ denotes the raising factorial $[t]^{[k]} := \prod_{i=1}^{k} (t+i-1)$ for $t\in\ku$ and $k\in\N_0$. 
The first formula appears in \cite{clw} and the second is folklore.

\begin{remark}
	The algebra $H$ is local. The unique maximal ideal is $I\coloneqq \langle x,y, g-1 \rangle$. Hence the only simple representation of
	$H$ is the trivial one. See \cite{ZC} for a study of the representations of $H$ and its liftings.
\end{remark}
\subsubsection{The Drinfeld double}
We start by recalling the definition.
\begin{definition}
Let $L$ be a finite-dimensional Hopf algebra. The Drinfeld double of $L$, denoted by $D(L)$, is a Hopf algebra 
whose underlying coalgebra is $ L\ot L^{* \operatorname{op}}$ and with multiplication and antipode defined as follows.

Let $h\bowtie f:= h \otimes f$ in $D(L)$ for all $f\in L^{* \operatorname{op}} = L^*$ and $h\in L$. Then 
\begin{align*}
(h\bowtie f)(h'\bowtie f') &= \big\langle f_{(1)},h'_{(1)}\big\rangle \big\langle f_{(3)}, \Ss(h'_{(3)}) \big\rangle (hh'_{(2)}\bowtie f'f_{(2)}),
\\
\Ss_{D(L)}(h\bowtie f) &= (1\bowtie \Ss^{-1}(f))(\Ss(h)\bowtie \varepsilon),
\end{align*}
where $fr = m(f\ot r)$ is the multiplication in $L^*$ rather than in $L^{* \operatorname{op}}$. 
\end{definition}

Our goal is to present $D(H)$; for this we start with $D(\ku\Gamma) = \ku \Gamma \otimes \ku^{\Gamma}$.
We need to describe $\ku^{\Gamma}$ suitably.
The polynomial ring $\ku[X]$ is a Hopf algebra with $X$ primitive.
Let $(\delta_k)_{ k\in \fp}\subseteq \ku^{\Gamma}$ be the dual basis of $(g^k)_{ k\in \fp}$ and 
\begin{align*}
\zeta =\sum_{k \in \fp} k \delta_k.
\end{align*}
The following result is well-known; of course it is crucial that $\car \ku = p$. 

\begin{lemma} The map $ X \mapsto \zeta$ gives an isomorphism of
Hopf algebras $\ku[X]/(X^p-X) \simeq \ku^{\Gamma}$; and
$(\zeta^i)_{i \in \I_{0,p-1}}$ is a basis of $\ku^\Gamma$. \qed
\end{lemma}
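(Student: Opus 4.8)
The plan is to check that the assignment $X\mapsto\zeta$ is compatible with both the algebra and the coalgebra structures, and then to invoke a dimension count. Recall first that, as an algebra, $\ku^{\Gamma}=\prod_{k\in\fp}\ku\delta_k$ is a product of $p$ copies of $\ku$ with orthogonal idempotents $\delta_k$ summing to $1$; in particular $\dim_\ku\ku^{\Gamma}=p=\dim_\ku\ku[X]/(X^p-X)$, and $\zeta$ is simply the function $g^m\mapsto m$. Using $\delta_k\delta_\ell=\delta_{k,\ell}\delta_k$ one computes $\zeta^p=\sum_{k\in\fp}k^p\delta_k$, and here $\car\ku=p$ enters decisively: since $k^p=k$ for every $k\in\fp$, we get $\zeta^p=\zeta$. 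Hence the algebra map $\ku[X]\to\ku^{\Gamma}$, $X\mapsto\zeta$, annihilates $X^p-X$ and descends to an algebra map $\varphi\colon\ku[X]/(X^p-X)\to\ku^{\Gamma}$.

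To see that $\varphi$ is onto, I would exhibit each idempotent as a polynomial in $\zeta$ by Lagrange interpolation: as the scalars $\{k:k\in\fp\}$ are pairwise distinct in $\ku$, the element $\delta_k=\prod_{j\in\fp\setminus\{k\}}\frac{\zeta-j}{k-j}$ lies in the subalgebra generated by $\zeta$, so $\zeta$ generates $\ku^{\Gamma}$. Together with the equality of dimensions this forces $\varphi$ to be an isomorphism of algebras; since it carries the basis $(X^i)_{i\in\I_{0,p-1}}$ to $(\zeta^i)_{i\in\I_{0,p-1}}$, the latter is a basis of $\ku^{\Gamma}$, which is the second assertion. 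One could equally argue injectivity via the identity $X^p-X=\prod_{k\in\fp}(X-k)$ in $\fp[X]$, which shows that the minimal polynomial of $\zeta$ is exactly $X^p-X$.

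It remains to verify that $\varphi$ is a morphism of bialgebras, and hence of Hopf algebras; as $X$ is primitive, it suffices to check that $\zeta$ is primitive and that $\varepsilon(\zeta)=0$. Under the identification $\ku^{\Gamma}\ot\ku^{\Gamma}\simeq\ku^{\Gamma\times\Gamma}$, the coproduct of $\ku^{\Gamma}$ sends a function $f$ to $(g^a,g^b)\mapsto f(g^{a}g^{b})=f(g^{a+b})$. Evaluating, $\Delta(\zeta)(g^a,g^b)=\zeta(g^{a+b})$ while $(\zeta\ot 1+1\ot\zeta)(g^a,g^b)=\zeta(g^a)+\zeta(g^b)$; these coincide because $g^m\mapsto m$ is additive $\Gamma\to(\ku,+)$, once more using that $\fp\hookrightarrow\ku$ and $\Gamma\simeq\Z/p\Z$. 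Thus $\zeta$ is primitive, and $\varepsilon(\zeta)=\zeta(1)=\zeta(g^0)=0=\varepsilon(X)$; compatibility with the antipode is then automatic for a bialgebra map between Hopf algebras. The whole argument is routine bookkeeping with the self-dual structure of $\ku^{\Gamma}$; the only genuinely essential ingredients are the two appearances of $\car\ku=p$, in $\zeta^p=\zeta$ and in the additivity underlying primitivity.
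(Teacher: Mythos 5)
Your proof is correct and complete. The paper itself offers no argument for this lemma---it is recorded as well-known, with no proof given---and your verification is exactly the standard one the authors had in mind: Fermat's little theorem $k^p=k$ in $\fp$ gives $\zeta^p=\zeta$ so the algebra map descends, Lagrange interpolation recovers the idempotents $\delta_k$ from $\zeta$ and yields surjectivity (hence bijectivity by the dimension count $p=p$, and the basis claim), and the additivity of $m\mapsto m \bmod p$ composed with $\fp\hookrightarrow\ku$ gives primitivity of $\zeta$, with antipode compatibility automatic for bialgebra maps between Hopf algebras.
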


\begin{lemma}\label{double-kgamma} The algebra $D(\ku\Gamma)$ is presented by generators $g,\zeta$ and relations
\begin{align}\label{relations-Dgamma}
g^p &= 1, & \zeta^p &= \zeta, & g\zeta = \zeta g.
\end{align}
The Hopf algebra structure is determined by 
\begin{align}\label{eq:hopf-structure-double-H-2}
\Delta(g)&= g\ot g, & \Delta(\zeta) &=\zeta\ot 1 + 1 \ot \zeta.
\end{align} 
\end{lemma}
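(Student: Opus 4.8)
The plan is to establish the presentation of $D(\ku\Gamma)$ by computing the Drinfeld double structure explicitly from the definition, using the two dual Hopf algebras $\ku\Gamma$ and $\ku^\Gamma$. The preceding lemma identifies $\ku^\Gamma \simeq \ku[X]/(X^p - X)$ via $X \mapsto \zeta$, so as an algebra $D(\ku\Gamma) = \ku\Gamma \otimes (\ku^\Gamma)^{\operatorname{op}}$ is generated by $g$ and $\zeta$. Since $\ku\Gamma$ and $\ku^\Gamma$ are both commutative, and the opposite of a commutative algebra is itself, the relations $g^p = 1$ and $\zeta^p = \zeta$ are immediate from the two factors. The content of the lemma is therefore the commutation relation $g\zeta = \zeta g$ together with the claim that these relations give a complete presentation.

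First I would verify the commutation relation directly from the double multiplication formula. Writing $g = g \bowtie \varepsilon$ and $\zeta = 1 \bowtie \zeta$, the cross relation in $D(L)$ is governed by
\begin{align*}
(1 \bowtie f)(h \bowtie \varepsilon) = \langle f_{(1)}, h_{(1)}\rangle\, \langle f_{(3)}, \Ss(h_{(3)})\rangle\, (h_{(2)} \bowtie f_{(2)}).
\end{align*}
Applying this with $h = g$ and $f = \zeta$, I would use that $\Delta(g) = g \ot g$ and that $\zeta$ is primitive as an element of $\ku[X]/(X^p-X)$, i.e. $\Delta(\zeta) = \zeta \ot 1 + 1 \ot \zeta$. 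The pairings $\langle \zeta, g^k\rangle = k$ and $\langle \varepsilon, g^k\rangle = 1$ then make the computation a short bookkeeping exercise; the key cancellation is that the two boundary contributions from $f_{(1)}$ and $f_{(3)}$ combine so that $g$ and $\zeta$ commute rather than producing a $g$-conjugation twist. One should get $\zeta g = g\zeta$, confirming the relation.

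The coalgebra structure is free, since by definition $D(L)$ has underlying coalgebra $L \ot L^{*\operatorname{op}}$; so $\Delta(g) = g \ot g$ is the group-like coproduct from $\ku\Gamma$ and $\Delta(\zeta) = \zeta \ot 1 + 1 \ot \zeta$ is the primitive coproduct transported from $\ku[X]$ through the lemma's isomorphism. These require only reading off the tensor-product coalgebra structure.

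The main point to be careful about is \emph{completeness} of the presentation: I must check that the abstract algebra $A$ defined by generators $g, \zeta$ and relations \eqref{relations-Dgamma} is not merely a quotient target but is isomorphic to $D(\ku\Gamma)$. For this I would exhibit the surjection $A \twoheadrightarrow D(\ku\Gamma)$ sending the generators to $g, \zeta$ (well-defined precisely because the relations hold, as verified above) and then compare dimensions. The relations $g^p = 1$, $\zeta^p = \zeta$, and $g\zeta = \zeta g$ show that $A$ is spanned by the monomials $g^i \zeta^j$ with $0 \le i, j \le p-1$, so $\dim A \le p^2$; since $\dim D(\ku\Gamma) = |\Gamma|^2 = p^2$, the surjection is an isomorphism. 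The one step deserving genuine attention is the sign/coefficient bookkeeping in the cross relation, since a miscalculation there would spuriously introduce a nontrivial commutator; everything else is dimension counting against the known size of the double.
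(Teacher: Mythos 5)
Your proposal is correct and follows essentially the same route as the paper: verify that $g,\zeta\in D(\ku\Gamma)$ satisfy \eqref{relations-Dgamma}, obtain the surjection $A\twoheadrightarrow D(\ku\Gamma)$, and conclude by comparing the spanning set $(g^k\zeta^\ell)$ of size $p^2$ with $\dim D(\ku\Gamma)=p^2$. The only difference is that you spell out the cross-relation computation in the double (where the paper simply asserts the relations hold), and your bookkeeping there is sound: the contributions $\langle\zeta,g\rangle=1$ and $\langle\zeta,\Ss(g)\rangle=-1$ indeed cancel, leaving $\zeta g=g\zeta$.
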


\begin{proof}
Let $A$ be the algebra generated by $\{\tilde{g},\tilde{\zeta}\}$ with relations \eqref{relations-Dgamma}. 
Since $\zeta$ and $g \in D(\ku\Gamma)$ satisfy \eqref{relations-Dgamma}, 
we have an epimorphism $A\twoheadrightarrow D(\ku\Gamma)$. But $A$ is linearly generated by
$(\tilde{g}^k\tilde{\zeta}^{\ell})_{\substack{\ell\in\I_{0,p-1}\\k \in \fp}}$, 
so $\dim A \leq p^2 = \dim D(\ku\Gamma)$.
\end{proof}

Next we describe $H^*$. We have morphisms of Hopf algebras $H\overset{\pi }{\underset{\iota}{\rightleftarrows}} \ku \Gamma$ 
such that $\pi \iota = \id$; dualizing we get $H^*\overset{\iota^*}{\underset{\pi ^*}{\rightleftarrows}} \ku^\Gamma$ with $\iota^*\pi ^* = \id$.
Hence $H^* \simeq \cR\# \ku^\Gamma$ where $\cR = \big(H^* \big)^{\operatorname{co} \iota^*} \simeq \toba(W)$, see e.g. \cite[2.3]{Beattie}.
Here $W \in \ydGd$ is $\simeq V^*$. 

\begin{lemma}
$H^{* \operatorname{op}}$ is presented by generators $u$, $ v$ and $\zeta$ with relations
\begin{align}\label{relations-H-dual}
\begin{aligned}
v^p &= 0, &u^p &= 0 , & v u &= u v-\frac{1}{2} u^2,\\
v\zeta &= \zeta v + v, &  u\zeta&= \zeta u + u, & \zeta^p &= \zeta.
\end{aligned}
\end{align}
A basis of $H^{* \operatorname{op}}$ is $(\zeta^k u^i v^j)_{i,j,k\in\I_{0, p-1}}$. The comultiplication is given by
\begin{align}\label{eq:hopf-structure-double-H-3}
\begin{aligned}
\Delta( u) &= u\ot 1 + 1 \ot u, & \Delta(\zeta) &= \zeta\ot 1 + 1 \ot \zeta, \\
\Delta( v) & =  v\ot 1 + 1\ot v +\zeta\ot u. &&
\end{aligned} 
\end{align}
\end{lemma}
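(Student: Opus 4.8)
The plan is to convert the decomposition $H^{*}\simeq\cR\#\ku^{\Gamma}$ recorded above---with $\cR\simeq\toba(W)$ and $W\simeq V^{*}$ in $\ydGd$---into an explicit presentation, and then to finish with a dimension count. First I would fix the three generators inside $H^{*\operatorname{op}}$: let $\zeta$ be the image under $\pi^{*}$ of the generator of $\ku^{\Gamma}$, and let $\{u,v\}$ be the basis of the degree-one piece $W\simeq V^{*}$ dual to $\{x,y\}$. Everything then reduces to (a) computing the Yetter-Drinfeld structure of $W=V^{*}$ over $\ku^{\Gamma}$, (b) reading off the bosonization relations and coproduct, and (c) checking exhaustiveness by a dimension count.

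For (a) and (b) I would split the relations into three families. The relation $\zeta^{p}=\zeta$ is inherited verbatim from Lemma \ref{double-kgamma}. The relations $u^{p}=v^{p}=0$ and $vu=uv-\tfrac12u^{2}$ are the defining relations of $\toba(W)$: since the dual of the Jordan braiding \eqref{Eq:V_as_YD_module} is again of Jordan type, $\toba(W)\simeq\toba(V^{*})$ is again a restricted Jordan plane, so after passing to the opposite algebra one recovers exactly this shape, the coefficient $-\tfrac12$ being pinned down by evaluating the relevant functionals on the basis $(x^{i}y^{j}g^{k})$ through \eqref{jordan-relations-between-monomials}. The cross relations $u\zeta=\zeta u+u$ and $v\zeta=\zeta v+v$ come from the smash-product multiplication: since $\zeta$ is primitive one has $\zeta r=(\zeta\rightharpoonup r)+r\zeta$ for $r\in\toba(W)$, so the two identities amount to computing the scalar by which $\zeta$ acts on $W$, a scalar common to $u$ and $v$ because $x$ and $y$ occupy the single $g$-homogeneous component of the $\Gamma$-coaction on $V$. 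Finally the coproduct \eqref{eq:hopf-structure-double-H-3} follows from the bosonization formula $\Delta(w)=w\ot 1+w_{(-1)}\ot w_{(0)}$, where $w_{(-1)}\ot w_{(0)}=\delta(w)$ is the $\ku^{\Gamma}$-coaction on $W$ dual to the $\Gamma$-action on $V$; the mixing $g\rightharpoonup y=y+x$ is precisely what forces the extra term $\zeta\ot u$ in $\Delta(v)$, while the other generator stays primitive.

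To finish, let $A$ be the algebra on $u,v,\zeta$ modulo \eqref{relations-H-dual}. A straightforward reordering (a diamond-lemma argument) shows that $A$ is spanned by $(\zeta^{k}u^{i}v^{j})_{i,j,k\in\I_{0,p-1}}$: the Jordan relation moves every $u$ to the left of every $v$, the cross relations move $\zeta$ to the left, and $\zeta^{p}=\zeta$, $u^{p}=0$, $v^{p}=0$ bound the exponents; hence $\dim A\le p^{3}$. Since the chosen generators of $H^{*\operatorname{op}}$ satisfy \eqref{relations-H-dual} and generate $H^{*}$ (they generate $\cR$ and $\ku^{\Gamma}$), we get an epimorphism $A\twoheadrightarrow H^{*\operatorname{op}}$; as $\dim H^{*\operatorname{op}}=\dim H=p^{3}$ it is an isomorphism and the spanning set is a basis.

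I expect the main obstacle to be (a): pinning down the Yetter-Drinfeld data of $W=V^{*}$ over $\ku^{\Gamma}$ with all signs and normalizations correct---the scalar of the $\zeta$-action, the precise coaction $\delta(v)=\zeta\ot u+1\ot v$, and the coefficient $-\tfrac12$---after both dualizing and passing to the opposite algebra. A self-contained alternative that sidesteps this bookkeeping is to define $u,v,\zeta$ directly as functionals on the basis $(x^{i}y^{j}g^{k})$ of $H$ and to verify \eqref{relations-H-dual} and \eqref{eq:hopf-structure-double-H-3} by computing products and coproducts in $H^{*}$ from \eqref{eq:hopf-structure-double-H-1} and \eqref{jordan-relations-between-monomials}; once the relations are secured, the reordering and dimension count are mechanical.
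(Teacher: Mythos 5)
Your proposal is correct and takes essentially the same route as the paper: the paper also establishes the relations by exhibiting $u,v,\zeta$ inside $H^{*\operatorname{op}}$ — in fact via your ``self-contained alternative,'' defining $u=\sum_{k}e_{0,1,k}$, $v=\sum_{k}e_{1,0,k}$, $\zeta=\sum_{k}k\,e_{0,0,k}$ as functionals on the basis $(x^iy^jg^k)$ and computing coproducts directly — and then finishes exactly as you do, with an abstract algebra $A$ spanned by the ordered monomials $\zeta^k u^i v^j$ and a dimension count against $\dim H^{*\operatorname{op}}=p^3$. The bookkeeping you flagged is indeed where care is needed: in the paper $u$ is dual to $y$ and $v$ to $x$ (so the mixing $g\rightharpoonup y=y+x$ dualizes to the term $\zeta\ot u$ in $\Delta(v)$), and $(W,c_W)$ is isomorphic to $(V,c_V^{-1})$ rather than to $(V,c_V)$ — inverse Jordan type, not Jordan type — although, as the paper notes, $\toba(W)$ still has the same defining relations.
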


\begin{proof}
Let $(e_{i,j,k})_{\substack{i,j\in\I_{0, p-1} \\ k\in \fp }}$ be the basis of $H^{* \operatorname{op}}$ dual to 
$(x^iy^jg^{k})_{\substack{i,j\in\I_{0, p-1} \\ k\in \fp }}$. 
We identify $\ku^\Gamma$ with a subalgebra of $H^*$, so $\zeta = \sum_{k\in \fp} k e_{0,0,k}$. 
We first claim that the following elements are primitive in $\cR$:
\begin{align*}
u = \sum_{k \in \fp} e_{0,1,k}, && v = \sum_{k \in \fp} e_{1,0,k}.
\end{align*}
Indeed, we compute in $H^{*}$:
\begin{align*}
\Delta(e_{1,0,h}) &= \sum_{i \in \fp} e_{1,0,i} \ot e_{0,0,h-i} + e_{0,0,i} \ot e_{1,0,h-i}+ i\,e_{0,0,i} \ot e_{0,1,h-i}, \\
\Delta(e_{0,1,h}) &= \sum_{i \in \fp} e_{0,1,h-i} \ot e_{0,0,i} + e_{0,0,i} \ot e_{0,1,h-i}.
\end{align*}
Hence \eqref{eq:hopf-structure-double-H-3} holds by a straightforward calculation. Thus
$ u, v \in \cR$; clearly they are primitive in $\cR$ and linearly independent.
Hence they form a basis of $W$. Explicitly its structure as Yetter-Drinfeld module is given by
\begin{align}\label{Eq:W_as_YD_module}
\zeta\rightharpoonup u &= u, & \zeta\rightharpoonup v &= v,&
\delta( u) &= 1\ot u,& \delta( v) &= 1\ot v+\zeta\ot u,
\end{align}
while the braiding $c_W$ is determined by
\begin{align*}
c_W( u\ot u) &= u\ot  u, & c_W( u\ot v) &= v\ot u, \\
c_W( v\ot u) &= u \ot ( v+ u), & c_W( v\ot v) &= v\ot ( v+ u).
\end{align*}
Thus we have an isomorphism of braided vector spaces $(V, c_V^{-1}) \to (W, c_W)$ that sends $x\mapsto -u$, $y\mapsto v$. 
The Nichols algebra $\toba(W)$ has the same relations as $\toba(V)$ and is finite-dimensional. 
The relations \eqref{relations-H-dual} are satisfied in $H^{*\operatorname{op}}$, but $\toba(V)$ is not isomorphic to $\toba(W)$
as braided Hopf algebras because $(V,c_V)$ and $(V,c_v^{-1})$ are not isomorphic as braided vector spaces.

Now, let $A$ be the algebra presented by generators $U=\{ u, v,\zeta\}$ and relations \eqref{relations-H-dual}. By the argument above we have $A\twoheadrightarrow H^{*\operatorname{op}}$, therefore $\dim A\geq p^3$. For this to be an isomorphism, 
it is enough to show that $A$ is linearly generated by $(\zeta^i u^j v^k)_{i,j,k\in\I_{0, p-1}}$. 
We define a total order in $U$ by declaring $\zeta< u< v$. 
By the defining relations, any product $ab$ with $a,b\in U$ and $b < a$ can be written as a linear combination of monomials $c_1\cdots c_s$ with $c_1\leq c_2\leq\cdots\leq c_s\in U$.
The claim follows from this together with the relations $u^p = 0$, $v^p = 0$, $\zeta^p = \zeta$.
Hence $A\simeq H^{*\operatorname{op}}$.
\end{proof}

\begin{prop}\label{prop:double-H}
The algebra $D(H)$ is presented by generators $ u, v,\zeta,g,x,y$ 
and relations \eqref{eq:relations-H}, \eqref{relations-Dgamma}, \eqref{relations-H-dual} and
\begin{align}\label{relation-DH}
\begin{aligned}
\zeta y &= y \zeta + y, & \zeta x&=x \zeta + x, & v g &=g v + g u,\\ 
u g &=g u , &  v x &= x v + (1-g) + x u, & u x &= x u, \\
u y &=y u +(1-g) &  v y &=y v -g \zeta + y u.
\end{aligned} 
\end{align} 
The comultiplication and antipode are given by \eqref{eq:hopf-structure-double-H-1}, 
\eqref{eq:hopf-structure-double-H-2} and \eqref{eq:hopf-structure-double-H-3}. The following family 
is a a PBW-basis of $D(H)$:
\begin{align*}
\left\{ x^n\,y^r\,g^m\,\zeta^k u^i v^j: \ i,j,k,n,r\in\I_{0, p-1}, \ m\in\fp \right\}
\end{align*}
\end{prop}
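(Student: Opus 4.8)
The plan is to establish Proposition~\ref{prop:double-H} by the same two-step strategy used for the previous two lemmas: first verify that the proposed relations genuinely hold in $D(H)$, yielding a surjection from the abstractly presented algebra, and then bound the dimension from above by showing that the claimed PBW family linearly spans the presented algebra. Since $\dim D(H) = \dim H \cdot \dim H^* = p^3 \cdot p^3 = p^6$, and the claimed PBW family has exactly $p \cdot p \cdot p \cdot p \cdot p \cdot p = p^6$ elements (the exponents $i,j,k,n,r$ each range over $\I_{0,p-1}$ and $m$ over $\fp$), matching cardinalities will force the surjection to be an isomorphism and the family to be a basis.

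For the first step I would compute the cross relations \eqref{relation-DH} directly from the Drinfeld double multiplication rule
\begin{align*}
(h\bowtie f)(h'\bowtie f') = \big\langle f_{(1)},h'_{(1)}\big\rangle \big\langle f_{(3)}, \Ss(h'_{(3)}) \big\rangle (hh'_{(2)}\bowtie f'f_{(2)}).
\end{align*}
Here the generators $g,x,y$ come from $H$ (embedded as $h\bowtie\varepsilon$) and $\zeta,u,v$ come from $H^{*\operatorname{op}}$ (embedded as $1\bowtie f$); the relations internal to each factor, namely \eqref{eq:relations-H}, \eqref{relations-Dgamma} and \eqref{relations-H-dual}, hold automatically because $H$ and $H^{*\operatorname{op}}$ are Hopf subalgebras of $D(H)$. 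The genuinely new content is the commutation of an $H$-generator past an $H^*$-generator, which the double's formula reduces to evaluating the pairing between the explicitly known coproducts of $x,y,g$ in $H$ (from \eqref{eq:hopf-structure-double-H-1}) and of $u,v,\zeta$ in $H^{*\operatorname{op}}$ (from \eqref{eq:hopf-structure-double-H-3}), together with the antipode. For each of the eight cross relations one plugs in the relevant coproducts, pairs off the dual bases $(x^iy^jg^k)$ and $(e_{i,j,k})$ exploiting the primitivity or near-primitivity of the generators, and reads off the result; the appearance of the group-like correction terms such as $(1-g)$ and $-g\zeta$ is exactly what one expects from the $\Ss$-twisted pairing in the third tensor leg.

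For the second step I would run the same ordered-monomial straightening argument as in the proof of the $H^{*\operatorname{op}}$ presentation. Impose the total order $x < y < g < \zeta < u < v$ matching the claimed PBW ordering, and check that every relation in \eqref{eq:relations-H}, \eqref{relations-Dgamma}, \eqref{relations-H-dual}, \eqref{relation-DH} rewrites any out-of-order product $ba$ (with $b>a$) as a $\ku$-linear combination of monomials that are either shorter or lexicographically smaller in this order. Because each relation expresses such a swap with correction terms of strictly lower total degree or involving group-like factors that can be absorbed, an induction on degree shows the presented algebra $A$ is spanned by the ordered monomials $x^n y^r g^m \zeta^k u^i v^j$; the truncations $x^p=0$, $y^p=0$, $g^p=1$, $\zeta^p=\zeta$, $u^p=0$, $v^p=0$ then cut the exponent ranges down to exactly $\I_{0,p-1}$ (and $\fp$ for $m$), giving $\dim A \le p^6$. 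Combined with the surjection $A \twoheadrightarrow D(H)$ and $\dim D(H) = p^6$, this completes the proof.

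I expect the main obstacle to be the verification of the two most intricate cross relations, namely $v x = x v + (1-g) + xu$ and $v y = yv - g\zeta + yu$, since $v$ has the non-trivial coproduct $\Delta(v) = v\ot 1 + 1\ot v + \zeta\ot u$ and must be paired against elements whose coproducts and antipodes involve the Jordan-plane mixing $\Delta(y) = y\ot 1 + g\ot y$. Tracking the three-leg pairing $\langle f_{(1)},h'_{(1)}\rangle\langle f_{(3)},\Ss(h'_{(3)})\rangle$ through these coproducts is where the bookkeeping is heaviest and where the exact coefficients of the correction terms (including the factor $-g\zeta$) must be pinned down; the formulas \eqref{jordan-relations-between-monomials} for moving $g^n$ and $x^n$ past powers of $y$ should streamline this. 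The straightening step is routine once the relations are in hand, so the real care lies entirely in the pairing computations.
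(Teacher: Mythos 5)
Your proposal is correct and follows essentially the same route as the paper: a surjection from the presented algebra $A$ onto $D(H)$ (so $\dim A \geq p^6$), followed by the ordered-monomial straightening argument with the order $x<y<g<\zeta<u<v$ and the truncation relations to show the claimed family spans $A$ (so $\dim A \leq p^6$), forcing the isomorphism. The only difference is one of detail: the paper simply asserts that the cross relations \eqref{relation-DH} hold in $D(H)$, whereas you spell out the pairing computations from the double's multiplication formula that justify this assertion.
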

\begin{proof}
Let $A$ be an algebra presented by generators $U=\{ u, v,\zeta,g,x,y\}$ and the relations above. 
These relations hold in $D(H)$, thus $A\twoheadrightarrow D(H)$, and $\dim A \geq p^6$. We claim that $B = (x^n\,y^r\,g^m\,\zeta^k u^i v^j)_{\substack{i,j,k,n,r\in\I_{0, p-1} \\ m\in\fp}}$ generates $A$. 
We define a total order in $U$ by declaring $x<y<g<\zeta< u< v$.
By the defining relations, any product $ab$ with $a,b\in U$ and $b < a$ can be written as a linear combination of monomials $c_1\cdots c_s$ with $c_1\leq c_2\leq\cdots\leq c_s\in U$. The claim follows from this together with the relations
$x^p = 0$, $y^p = 0$, $u^p = 0$, $v^p = 0$, $g^p = 1$, $\zeta^p = \zeta$.
Hence $A\simeq D(H)$. 
\end{proof}

\subsection{An exact sequence}
We recall first the definition of short
exact sequence of Hopf algebras see e. g. \cite{ad, Sch,Hf}.

\begin{definition}
A sequence of morphisms of Hopf algebras
\begin{align*}
A\xhookrightarrow[]{\iota} C \xrightarrowdbl[]{\pi} B
\end{align*}
is exact if the following conditions holds:
\begin{multicols}{2}
\begin{enumerate}[leftmargin=*,label=\rm{(\roman*)}]
\item\label{suc-exacta-1} $\iota$ is injective.
\item\label{suc-exacta-2} $\pi$ is surjective.
\item\label{suc-exacta-3} $\ker\pi = C\iota(A)^+$.
\item\label{suc-exacta-4} $\iota(A) = C^{\operatorname{co} \pi}$.
\end{enumerate}
\end{multicols}
\end{definition}

\begin{remark}\label{remark-exact-sequence-hopf}
If $A\xhookrightarrow[]{\iota} C$ is faithfully flat and $\iota(A)$ is stable by the left adjoint action of $C$ then
\ref{suc-exacta-1}, \ref{suc-exacta-2} and \ref{suc-exacta-3} imply \ref{suc-exacta-4}, see \cite[1.2.5, 1.2.14]{ad}, \cite{Sch}.
\end{remark}

Let $\{h,e,f\}$ be the Cartan generators of $\spl_2(\ku)$.

\begin{prop}\label{prop:double-H-ext} The subalgebra $\nucleo$ of $D(H)$ generated by
$g$, $x$ and $u$ is a normal local commutative Hopf subalgebra of $D(H)$ of dimension $p^3$ with defining relations
\begin{align}\label{eq:rels-subalgebra}
g^p &= 1, & x^p &= 0, & u^p &= 0. 
\end{align}
It gives rise to the exact sequence of Hopf algebras 
\begin{align*}
\xymatrix{ \nucleo \ar@{^{(}->}[r]  & D(H) \ar@{->>}[r]  & \mathfrak u(\spl_2(\ku)).}
\end{align*}
\end{prop}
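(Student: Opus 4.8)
The plan is to establish the claimed statement in three stages: first show that $\nucleo = \langle g, x, u\rangle$ is a commutative local Hopf subalgebra with the stated presentation and dimension $p^3$; then identify the quotient $D(H)/D(H)\nucleo^+$ with $\mathfrak u(\spl_2(\ku))$; and finally verify that the four axioms of an exact sequence hold, using normality and faithful flatness.

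First I would check that $g$, $x$, $u$ generate a \emph{commutative} subalgebra. From the relations \eqref{eq:relations-H}, \eqref{relations-Dgamma}, \eqref{relations-H-dual}, \eqref{relation-DH} one reads off that $g$ is central among these three ($gx=xg$ from \eqref{eq:relations-H}, $ug = gu$ from \eqref{relation-DH}), that $ux = xu$ (from \eqref{relation-DH}), and that $x,u$ commute with each other; together with $g^p=1$, $x^p=0$, $u^p=0$ these are exactly the relations \eqref{eq:rels-subalgebra}. To see that $\nucleo$ has dimension $p^3$ and \eqref{eq:rels-subalgebra} is a full presentation, I would invoke the PBW-basis of $D(H)$ from Proposition \ref{prop:double-H}: the monomials $x^n g^m u^i$ with $n,i\in\I_{0,p-1}$, $m\in\fp$ occur as a sub-family of the PBW-basis, hence are linearly independent in $D(H)$, so the abstract algebra on \eqref{eq:rels-subalgebra} (which has dimension $\le p^3$) maps isomorphically onto $\nucleo$. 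That $\nucleo$ is a Hopf subalgebra follows since $g$ is grouplike and $x,u$ are primitive (this is read off from \eqref{eq:hopf-structure-double-H-1} and \eqref{eq:hopf-structure-double-H-3}, noting $\Delta(x)=x\ot 1+g\ot x$ and $\Delta(u)=u\ot 1+1\ot u$), so $\Delta(\nucleo)\subseteq\nucleo\ot\nucleo$ and $\Ss(\nucleo)\subseteq\nucleo$; locality is immediate since $x,u$ are nilpotent and $g-1$ is nilpotent.

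Next I would produce the quotient map $D(H)\twoheadrightarrow \mathfrak u(\spl_2(\ku))$. The natural candidate is the algebra map killing $\nucleo^+$, i.e. sending $g\mapsto 1$, $x\mapsto 0$, $u\mapsto 0$, and sending the remaining generators to the Chevalley generators: something like $\zeta\mapsto h$ (or a scalar multiple), $y\mapsto$ a root vector, $v\mapsto$ the opposite root vector. To pin down the exact assignment I would set $g=1,x=0,u=0$ in the defining relations of $D(H)$ and read off the induced relations among $\zeta,y,v$; for instance \eqref{relation-DH} gives $\zeta y - y\zeta = y$, $vy-yv=-\zeta$, and from \eqref{relations-H-dual} one gets $vy-yv$-type brackets — these should collapse to the $\spl_2$ bracket relations $[h,e]=2e$, $[h,f]=-2f$, $[e,f]=h$ up to normalization, together with the restricted relations $e^p=f^p=0$ and $h^p=h$ coming from $y^p=v^p=0$, $\zeta^p=\zeta$. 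Verifying that these are \emph{exactly} the defining relations of $\mathfrak u(\spl_2(\ku))$, with no extra relations and correct dimension $p^3$, is where care is needed: the cleanest route is a dimension count, since $\dim D(H)=p^6=\dim\nucleo\cdot\dim\mathfrak u(\spl_2(\ku))$, so once I exhibit a surjection $D(H)/D(H)\nucleo^+\twoheadrightarrow\mathfrak u(\spl_2(\ku))$ it must be an isomorphism provided $\dim D(H)/D(H)\nucleo^+\le p^3$, which again follows from the PBW-basis.

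Finally, for the exactness axioms: injectivity of $\nucleo\hookrightarrow D(H)$ and surjectivity of $\pi$ are clear; $\ker\pi=D(H)\nucleo^+$ holds by construction of $\pi$. For normality I would show $\nucleo$ is stable under the left adjoint action of $D(H)$, checking on generators — this is the one genuinely computational point, as one must verify $\ad(y),\ad(v),\ad(\zeta)$ map $\{g,x,u\}$ back into $\nucleo$ using \eqref{relation-DH}. The hard part will be organizing this adjoint-stability check and confirming the induced $\spl_2$ relations with the correct constants; once normality and faithful flatness (automatic here, as $D(H)$ is free over the finite-dimensional Hopf subalgebra $\nucleo$ by the Nichols--Zoeller theorem) are in hand, Remark \ref{remark-exact-sequence-hopf} upgrades \ref{suc-exacta-1}--\ref{suc-exacta-3} to the full exact sequence, giving \ref{suc-exacta-4} for free.
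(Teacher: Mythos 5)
Your proposal is correct and follows essentially the same route as the paper's proof: identify $\nucleo$ and its presentation via the PBW-basis of $D(H)$ and a dimension count, map the quotient $D(H)/D(H)\nucleo^+$ onto $\mathfrak u(\spl_2(\ku))$ (the paper fixes the normalization as $\zeta\mapsto h$, $y\mapsto \tfrac{1}{2}e$, $v\mapsto f$), and deduce exactness from normality together with faithful flatness and Remark \ref{remark-exact-sequence-hopf}. One point in your favor: verifying stability of $\nucleo$ under the adjoint action is the correct formulation of normality here, since $x$ and $u$ are \emph{not} normal elements of $D(H)$ (e.g. $vx = xv + (1-g) + xu$ while $1-g \notin xD(H)$ by the PBW-basis), so the paper's phrase ``$g$, $x$ and $u$ are normal'' must be read as adjoint-stability of the subalgebra they generate --- which is exactly what you check.
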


Since $\nucleo $ is commutative and $\mathfrak u(\spl_2(\ku))$ is cocommutative, $D(H)$ arises as an abelian extension.

\pf By Proposition \ref{prop:double-H} $\nucleo $ is a commutative Hopf subalgebra of dimension $p^3$ and there is a surjective algebra map from the commutative algebra presented by relations \eqref{eq:rels-subalgebra} to $\nucleo $. By dimension counting this map is an isomorphism. 
By inspection $g$, $x$ and $u$ are normal, hence so is $\nucleo $.
Thus $D(H)\nucleo ^+$ is a Hopf ideal of $D(H)$ and the quotient $D(H)/D(H)\nucleo ^+$ is isomorphic to $\mathfrak u(\spl_2(\ku))$ via $\zeta\mapsto h$, $y\mapsto \frac{1}{2}e$ and $v\mapsto f$. \epf

\begin{remark}\label{rem:algebraic-group-normal-hopfsubalgebra}
Let $\Gb = (\Gb_a \times \Gb_a) \rtimes \Gb_m$ be the semidirect product where $\Gb_m$ acts on $\Gb_a \times \Gb_a$
by 
$\lambda_ \cdot(t_1, t_2) =(t_1, \lambda t_2)$, $\lambda \in \kut$, $t_1, t_2\in\ku$.
Then the algebra of regular functions $\cO(\Gb)$ is isomorphic to $\ku[X_1,X_2, T^{\pm 1}]$
and there is a short exact sequence of Hopf algebras
\begin{align*}
\xymatrix{ \cO(\Gb)  \ar@{^{(}->}[rr] ^{\operatorname{Fr}} & & \cO(\Gb) \ar@{->>}[rr] ^{\pi} & & \nucleo },
\end{align*}
 with $\pi\colon\cO(\Gb)\longrightarrow \nucleo $
given by $T\mapsto g$, $X_1\mapsto u$, $X_2\mapsto x$. In other words $\operatorname{Spec} \nucleo $ is the kernel of the Frobenius
endomorphism of $\Gb$.
\end{remark}

\begin{theorem}\label{th:irrep-restricted-jordan}
 There are exactly $p$ isomorphism classes of simple $D(H)$-modules which have dimensions $1,2,\dots,p$. 
\end{theorem}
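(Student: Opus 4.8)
The plan is to deduce the theorem directly from the structural results already established, exploiting that the classification of simple modules is invariant under the abelian extension $\nucleo \hookrightarrow D(H) \twoheadrightarrow \mathfrak u(\spl_2(\ku))$. The key observation is that $\nucleo$ is a \emph{local} commutative Hopf subalgebra, so its only simple module is the trivial one (the augmentation). First I would argue that any simple $D(H)$-module $M$ is annihilated by the augmentation ideal $\nucleo^+$: since $\nucleo$ is normal, $M$ decomposes as a $\nucleo$-module into generalized eigenspaces, and because $\nucleo$ is local with unique character $\varepsilon$, the submodule $M^{\nucleo} = \{m : \nucleo^+ m = 0\}$ is nonzero; normality of $\nucleo$ forces $M^{\nucleo}$ to be a $D(H)$-submodule, hence $M^{\nucleo} = M$ by simplicity. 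Therefore $M$ factors through the quotient $D(H)/D(H)\nucleo^+ \simeq \mathfrak u(\spl_2(\ku))$.

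Consequently the functor of restriction of scalars along $D(H) \twoheadrightarrow \mathfrak u(\spl_2(\ku))$ induces a bijection between $\Irr D(H)$ and $\Irr \mathfrak u(\spl_2(\ku))$, so the problem reduces to the classical classification of simple modules over the restricted enveloping algebra $\mathfrak u(\spl_2(\ku))$ in characteristic $p > 2$. Then I would invoke the well-known result that $\mathfrak u(\spl_2(\ku))$ has exactly $p$ isomorphism classes of simple modules, of dimensions $1, 2, \dots, p$; these are precisely the restricted simple $\spl_2$-modules $L(\lambda)$ for highest weights $\lambda \in \{0, 1, \dots, p-1\}$, realized as the irreducible quotients of the baby Verma modules. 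This accounts for the count, the dimensions, and the Verma-module realization promised in the introduction, matching the claim via the explicit identifications $\zeta \mapsto h$, $y \mapsto \tfrac{1}{2}e$, $v \mapsto f$ from Proposition~\ref{prop:double-H-ext}.

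The main obstacle is the first step: rigorously establishing that every simple module is trivial over $\nucleo$. The subtlety is that normality of $\nucleo$ as a Hopf subalgebra must be translated into the statement that $M^{\nucleo}$ is stable under the $D(H)$-action. The clean way to see this is that normality means $\nucleo^+ D(H) = D(H)\nucleo^+$ is a two-sided (Hopf) ideal, so $D(H)\nucleo^+ \cdot M$ is a $D(H)$-submodule; by simplicity it is either $0$ or $M$. If it were $M$, one would need to rule this out using locality of $\nucleo$ together with finite-dimensionality — concretely, $\nucleo^+$ acts nilpotently (being the augmentation ideal of a finite-dimensional local algebra), so $\nucleo^+ M \subsetneq M$, forcing $D(H)\nucleo^+ M = M$ to fail and hence $\nucleo^+ M = 0$. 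I would take care to phrase the nilpotency argument correctly: since $\nucleo$ is local finite-dimensional, $(\nucleo^+)^N = 0$ for some $N$, so $\nucleo^+$ cannot act surjectively on the nonzero finite-dimensional space $M$.

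Once this reduction is in place, the remainder is a citation to the representation theory of $\mathfrak u(\spl_2(\ku))$ rather than new work; I would state it as a proposition (the promised Proposition~\ref{prop:irrep-D(H)}) describing the simple $\mathfrak u(\spl_2(\ku))$-modules explicitly as quotients of Verma modules, and then transport this description back along the quotient map to obtain Theorem~\ref{th:irrep-restricted-jordan}.
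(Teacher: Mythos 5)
Your proposal is correct and takes essentially the same approach as the paper: the paper's proof simply observes that the two-sided ideal $D(H)\nucleo^+$, generated by $x$, $u$ and $g-1$, is nilpotent, hence contained in the Jacobson radical, so that $\Irr D(H) \simeq \Irr \mathfrak u(\spl_2(\ku))$, and then invokes the classical classification of simple $\mathfrak u(\spl_2(\ku))$-modules. Your module-by-module argument (locality of $\nucleo$ plus normality forces $\nucleo^+$ to annihilate every simple module) is an unwound, equally valid version of that same nilpotency observation, ending with the identical citation to $\spl_2$ theory.
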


\pf The two-sided ideal $D(H)\nucleo ^+$, generated by $x$, $u$ and $g-1$
is nilpotent, hence contained in the Jacobson radical of $D(H)$ and $\Irr D(H) \simeq \Irr \mathfrak u(\spl_2(\ku))$.
Then the well-known classification of the latter appplies.
\epf

\subsection{Simple modules}
Here we describe the simple modules of $D(H)$
as quotients of Verma modules reproving Theorem \ref{th:irrep-restricted-jordan}. 

First $D(H) = \oplus_{n\in\Z}D(H)^n$ is $\Z$-graded by 
\begin{align*}
\deg x =\deg y = -1, && \deg u =\deg v = 1, && \deg g = \deg \zeta = 0.
\end{align*}
Thanks to the PBW-basis, the multiplication induces a linear isomorphism
\begin{align*}
\toba(V)\ot D(\ku\Gamma)\ot\toba(W)\longrightarrow D(H)
\end{align*}
called the triangular decomposition of $D(H)$. The subalgebras 
\begin{align*}
\toba(W) &\eqqcolon\D^{> 0},& \toba(V)&\eqqcolon\D^{< 0}& \text{ and } & D(\ku\Gamma)
\end{align*}
are graded and satisfy

\begin{enumerate}
\smallbreak\item $\D^{> 0}\subseteq\oplus_{n\in\N_0} D(H)^n$, $\D^{< 0}\subseteq\oplus_{n\in -\N_0} D(H)^n$ and $D(\ku\Gamma)\subseteq D(H)^0$.

\smallbreak
\item $(\D^{> 0})^0 = \ku = (\D^{< 0})^0$.

\smallbreak
\item $\D^{\geq0} := D(\ku\Gamma)\D^{> 0}$ and $\D^{\leq0} := \D^{< 0}D(\ku\Gamma)$ are subalgebras of $D(H)$.
\end{enumerate}

In this context the simple modules of $D(H)$ arise inducing from $\D^{\geq0}$. 
The elements of $\Lambda := \Irr D(\ku\Gamma)$ are called \emph{weights}. 
Since $\D^{> 0}$ is local, the (homogeneous)
projection $\D^{\geq0} \twoheadrightarrow D(\ku\Gamma)$ allows to identify
$\Lambda \simeq \Irr \D^{\geq0}$.
The  Verma module associated to $\lambda\in\Lambda$ is
\begin{align*}
\verma(\lambda) = \Ind^{D(H)}_{\D^{\geq0}}\lambda = D(H)\ot_{\D^{\geq0}} \lambda.
\end{align*}
By a standard argument, $\verma(\lambda)$ is indecomposable. 
Let $L(\lambda)$ be the head of $\verma(\lambda)$. 
The following result is well-known, see for instance \cite[Theorem 2.1]{Vay}.

\begin{lemma} The map $\lambda \mapsto L(\lambda)$ gives a bijection $\Lambda \simeq \Irr D(H)$.  \qed
\end{lemma}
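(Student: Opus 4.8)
The plan is to run the standard highest-weight argument for an algebra with triangular decomposition, whose hypotheses were just recorded: the multiplication $\toba(V)\ot D(\ku\Gamma)\ot\toba(W)\to D(H)$ is bijective, $\D^{>0}=\toba(W)$ is local and non-negatively graded, $\D^{<0}=\toba(V)$ is non-positively graded, and $\D^{\geq0}=D(\ku\Gamma)\D^{>0}$ is a subalgebra with $(\D^{>0})^0=\ku$. Writing $v_\lambda$ for a generator of the line $\lambda$, the triangular decomposition identifies $\verma(\lambda)=\toba(V)\ot v_\lambda$ as a vector space; in particular $\verma(\lambda)$ is finite-dimensional, lives in non-positive $\Z$-degrees, its top component $\ku\,v_\lambda$ is one-dimensional and isomorphic to $\lambda$ as a $D(\ku\Gamma)$-module, and $v_\lambda$ generates $\verma(\lambda)$. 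There are three things to establish: that $L(\lambda)$ is simple (equivalently, that $\verma(\lambda)$ has a unique maximal submodule), that $\lambda\mapsto L(\lambda)$ is injective, and that it is surjective.

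For simplicity of $L(\lambda)$ I would produce a unique maximal submodule. The element $v_\lambda$ is annihilated by the augmentation ideal $(\D^{>0})^+$ and generates $\verma(\lambda)$, so any submodule meeting $\ku\,v_\lambda$ is everything; the point is to see that no proper submodule can ``reach'' the top line. The clean way is a bilinear form: inside $D(H)=H\bowtie H^{*\operatorname{op}}$ the Hopf pairing restricts to a pairing $\toba(W)\times\toba(V)\to\ku$ which is nondegenerate because $W\simeq V^*$ forces $\toba(W)\simeq\toba(V)^*$ to be in perfect duality. This yields a form on $\verma(\lambda)$ adapted to (``contravariant for'') the triangular decomposition and nondegenerate on the top line; its radical is a submodule not containing $v_\lambda$, the quotient by it is $L(\lambda)$, and one checks that every proper submodule is contained in this radical, so it is the unique maximal submodule. \emph{This is the main obstacle}: everything else is formal, but the existence of a unique maximal submodule genuinely uses that $\D^{>0}$ and $\D^{<0}$ are \emph{dual} Nichols algebras, not merely that $\D^{>0}$ is local. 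In the abstract setting this is exactly the content of \cite[Theorem 2.1]{Vay}, whose hypotheses we have verified.

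Surjectivity is where locality of $\D^{>0}$ is used directly. Given a simple module $S\in\Irr D(H)$, the subspace $S^{(\D^{>0})^+}$ of vectors killed by the augmentation ideal is nonzero, because $(\D^{>0})^+$ is a nilpotent ideal of the local algebra $\D^{>0}$; moreover it is stable under $D(\ku\Gamma)$, since $D(\ku\Gamma)$ normalizes $(\D^{>0})^+$ inside the subalgebra $\D^{\geq0}$. Choosing a simple $D(\ku\Gamma)$-submodule $\lambda\subseteq S^{(\D^{>0})^+}$ makes $\lambda$ a $\D^{\geq0}$-module on which $(\D^{>0})^+$ acts by zero, and the inclusion $\lambda\hookrightarrow S$ is a map of $\D^{\geq0}$-modules. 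By the adjunction $\Hom_{D(H)}(\verma(\lambda),S)\simeq\Hom_{\D^{\geq0}}(\lambda,S)$ it corresponds to a nonzero $D(H)$-map $\verma(\lambda)\to S$, onto as $S$ is simple; hence $S$ is the head $L(\lambda)$.

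Finally, for injectivity I would recover $\lambda$ from $L(\lambda)$. By construction the top component of $L(\lambda)$ is the image of $\ku\,v_\lambda$, which survives one-dimensional in the quotient (the form is nondegenerate there) and is isomorphic to $\lambda$ as a $D(\ku\Gamma)$-module; since an isomorphism $L(\lambda)\simeq L(\mu)$ carries top line to top line, it gives $\lambda\simeq\mu$ in $\Lambda$. Alternatively, having proved surjectivity one can finish by counting: $D(\ku\Gamma)$ is commutative with $g$ unipotent (so $g$ acts trivially on any one-dimensional module) and $\zeta^p=\zeta$, whence $\Lambda\simeq\fp$ has exactly $p$ elements, matching the $p$ simple modules of Theorem \ref{th:irrep-restricted-jordan}, and a surjection between $p$-element sets is a bijection. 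I would prefer the intrinsic argument, since the subsection aims to \emph{reprove} that count rather than assume it.
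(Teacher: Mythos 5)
Your skeleton is in fact the paper's: the paper proves this lemma by setting up the triangular decomposition precisely so as to verify the hypotheses of \cite[Theorem 2.1]{Vay} and then citing that theorem, which is what you also ultimately do; and your surjectivity argument (nonzero $(\D^{>0})^+$-annihilated vectors in any simple module by nilpotency, stability under $D(\ku\Gamma)$ because $D(\ku\Gamma)(\D^{>0})^+$ is an ideal of $\D^{\geq0}$, then the induction adjunction) is the correct standard argument behind that citation. Two of your elaborations, however, are genuinely defective.

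First, the claim you single out as ``the main obstacle'' --- that uniqueness of the maximal submodule of $\verma(\lambda)$ \emph{genuinely uses} that $\D^{>0}$ and $\D^{<0}$ are dual Nichols algebras rather than mere locality --- is false, and your form-based sketch is circular at exactly that point. By the triangular decomposition, $\verma(\lambda)$ is free of rank one over $\D^{<0}=\toba(V)$; hence any vector whose degree-$0$ component is nonzero equals $\theta w_\lambda$ with $\varepsilon(\theta)\neq0$, and since $\toba(V)$ is local with nilpotent augmentation ideal, $\theta$ is a unit, so that vector generates $\verma(\lambda)$. Therefore every proper submodule lies in $\oplus_{n\leq-1}\verma(\lambda)_n$ and the sum of all proper submodules is proper: locality of $\D^{<0}$ plus freeness suffice, with no duality --- this is exactly the argument the paper runs for $\verma(\lambda_k)$ right after the lemma. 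In your sketch, the unproved step ``one checks that every proper submodule is contained in this radical'' \emph{is} this statement: nondegeneracy of the pairing $\toba(W)\times\toba(V)\to\ku$ does not by itself rule out a (possibly non-graded) proper submodule meeting the top line, so the contravariant form repackages the difficulty rather than resolving it.

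Second, injectivity: an abstract isomorphism $L(\lambda)\simeq L(\mu)$ of $D(H)$-modules has no a priori reason to ``carry top line to top line'', because the top line is defined through the grading and nothing forces the isomorphism to be homogeneous. The gap is repairable: the maximal submodule equals $\mathrm{rad}(D(H))\cdot\verma(\lambda)$, which is graded because the Jacobson radical of a finite-dimensional $\Z$-graded algebra is a graded ideal, so $L(\lambda)$ and $L(\mu)$ are graded modules; then $\Hom_{D(H)}(L(\lambda),L(\mu))$ is a graded vector space, one-dimensional by Schur's lemma, so the isomorphism may be taken homogeneous, and since both modules are concentrated in degrees $\leq 0$ with generating top components its degree must be $0$, giving $\lambda\simeq\mu$ as $D(\ku\Gamma)$-modules. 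As written, though, this is an assertion, not a proof. Your counting fallback (surjectivity together with $|\Lambda|=p$ and Theorem \ref{th:irrep-restricted-jordan}) is logically valid, but, as you yourself note, it leans on the theorem this subsection sets out to reprove.
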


The set $\Lambda$ is easy to compute since $D(\ku\Gamma) \simeq \ku \Gamma \otimes \ku^{\Gamma}$
and $\ku \Gamma$ is local. Given $k \in \fp$, let $\lambda_k = \ku w_k$ be the one-dimensional vector space with action
\begin{align*}
g\cdot w_k = w_k, && \zeta\cdot w_k = k w_k.
\end{align*}

\begin{lemma}The map $k \mapsto \lambda_k$ provides a bijection $\fp \simeq \Lambda$.
\qed.
\end{lemma}

We fix $k \in \fp$ and compute $L(\lambda_k)$. 
Since $\verma(\lambda)$ is free as a $\D^{< 0}$-module with basis $( w_k)$, $( w_k^{(i,j)})_{i,j\in\I_{0, p-1}}$ is a linear basis of $\verma(\lambda)$, with $ w_k^{(i,j)} = x^i y^j \cdot w_k$.
This makes $\verma(\lambda_k)$ a graded module by $\deg w_k^{(i,j)} = \deg (x^i y^j) = -i-j$, so $\verma(\lambda_k) =\oplus_{n\leq 0}\verma(\lambda_k)_{n} $
and $\verma(\lambda_k)_0 = \ku w_k$. Any proper submodule of $\verma(\lambda_k)$ is necessarily contained in $\oplus_{n\leq -1}\verma(\lambda_k)_{n}$, so the sum of all proper submodules is proper, and $\verma(\lambda_k)$ has an unique simple quotient $L(\lambda_k)$. We divide $\verma(\lambda_k)$ by proper submodules until we get a simple one.
\begin{lemma}
The submodule $N_k$ of $\verma(\lambda_k)$ generated by $ w_k^{(1,0)}$ is proper.
\end{lemma}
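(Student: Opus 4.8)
The plan is to recognize $w_k^{(1,0)} = x w_k$ as a singular vector, annihilated by the whole positive part $\D^{>0}$, and then to use the triangular decomposition to trap $N_k$ inside the strictly negative degrees, where it cannot meet $\ku w_k = \verma(\lambda_k)_0$. Since $\verma(\lambda_k)_0 = \ku w_k$ and $w_k$ generates $\verma(\lambda_k)$, and since $N_k$ is generated by the homogeneous element $x w_k$ of degree $-1$ (so it is a graded submodule), it suffices to show $(N_k)_0 = 0$, equivalently $w_k \notin N_k$.

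The crux is to show $\D^{>0}\cdot x w_k = \ku\, x w_k$. Because $\D^{>0} = \toba(W)$ is generated by $u$ and $v$, it is enough to check $u\cdot x w_k = 0 = v\cdot x w_k$; indeed any PBW monomial $u^i v^j$ with $i+j\geq 1$ then kills $x w_k$ as well, the factor nearest to $x w_k$ acting first. Using the defining relations in \eqref{relation-DH} together with $g w_k = w_k$ and $u w_k = v w_k = 0$ (the latter two because $\D^{>0}$ acts on $\lambda_k$ through its augmentation), one computes
\begin{align*}
u\cdot x w_k &= (xu)\, w_k = 0, &
v\cdot x w_k &= \big(xv + (1-g) + xu\big)\, w_k = (1-g)\, w_k = 0.
\end{align*}
This single computation is the step to get right: the vanishing of $v\cdot x w_k$ hinges precisely on the relation $vx = xv + (1-g) + xu$ and on $(1-g)$ acting as zero on the trivial $g$-weight vector $w_k$, so it is here that the specific structure of $D(H)$ and of $\lambda_k$ is used.

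Finally, the triangular decomposition $D(H) = \D^{<0}\, D(\ku\Gamma)\, \D^{>0}$ yields
\begin{align*}
N_k = D(H)\cdot x w_k = \D^{<0}\, D(\ku\Gamma)\, \D^{>0}\cdot x w_k = \D^{<0}\, D(\ku\Gamma)\cdot x w_k.
\end{align*}
Since $D(\ku\Gamma)\subseteq D(H)^0$ preserves the grading and $\D^{<0}\subseteq \oplus_{n\leq 0} D(H)^n$ cannot raise it, the right-hand side lies in $\oplus_{n\leq -1}\verma(\lambda_k)_n$. Hence $(N_k)_0 = 0$, so $w_k\notin N_k$ and $N_k$ is proper. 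I expect the only real obstacle to be the commutation computation giving $v\cdot x w_k = 0$; once $x w_k$ is seen to be singular, the confinement to negative degrees is purely formal from the triangular decomposition.
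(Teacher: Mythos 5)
Your proposal is correct and follows essentially the same route as the paper: both verify via the relations $ux = xu$ and $vx = xv + (1-g) + xu$ that the augmentation ideal of $\D^{>0}$ annihilates $w_k^{(1,0)} = x\cdot w_k$, and then use the triangular decomposition to confine $N_k = \D^{<0}D(\ku\Gamma)\cdot w_k^{(1,0)}$ to $\oplus_{n\leq -1}\verma(\lambda_k)_n$. If anything, your write-up is slightly more careful than the paper's (which writes $\D^{>0}\cdot w_k^{(1,0)}=0$ where it means the augmentation ideal acts by zero), but the content is identical.
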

\begin{proof}
The action of $u$ and $ v$ gives
\begin{align*}
u x\cdot w_k = x u\cdot w_k = 0, && v x\cdot w_k = x v \cdot w_k + (1-g)\cdot w_k + x u\cdot w_k= 0.
\end{align*}
So $\D^{> 0}\cdot w_k^{(1,0)} = 0$. Then $N_k = \D^{\leq0}\cdot w_k^{(1,0)} \subseteq \oplus_{n\leq -1}\verma(\lambda_k)_{n}$
 is proper. 
\end{proof}
Let $V_k = \verma(\lambda_k) / N_k$ and let $y_j$ be the class of $w_k^{(0,j)}$ in $V_k$.
\begin{lemma}
The family $(y_j)_{j\in\I_{0, p-1}}$ generates linearly $V_k$ and $g$, $ u$ and $x$ act trivially on $V_k$.
\end{lemma}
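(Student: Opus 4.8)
The plan is to prove the lemma directly from the module structure of $V_k = \verma(\lambda_k)/N_k$, using the fact that $V_k$ is spanned by the classes of the basis elements $w_k^{(i,j)} = x^i y^j \cdot w_k$ of $\verma(\lambda_k)$. First I would observe that since $N_k$ is the submodule generated by $w_k^{(1,0)} = x \cdot w_k$, and the action of $x$ raises the first index, every basis element with $i \geq 1$, namely $w_k^{(i,j)} = x^i y^j \cdot w_k = x^{i-1}(x y^j \cdot w_k)$, lies in $N_k$ once I check that $x y^j \cdot w_k \in N_k$. The cleanest route is to commute $x$ past $y^j$ using the relation $yx = xy - \tfrac12 x^2$ (equivalently the second displayed formula in \eqref{jordan-relations-between-monomials}), which shows that $x y^j \cdot w_k$ is a linear combination of elements $x^{k'+1} y^{j-k'} \cdot w_k = x^{k'} \cdot w_k^{(1, j-k')}$; since $N_k$ is a submodule containing $w_k^{(1,0)}$ and is $\D^{\leq 0}$-stable (indeed $\D^{<0}$ acts freely raising degree), each such term lies in $N_k$. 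Hence in the quotient $V_k$ only the classes $y_j := \overline{w_k^{(0,j)}}$ survive, giving the spanning statement.

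Next I would establish that $x$ acts trivially on $V_k$. By the previous paragraph $x \cdot w_k^{(0,j)} = w_k^{(1,j)} \in N_k$, so $x \cdot y_j = 0$ in $V_k$ for every $j$; since the $y_j$ span $V_k$, this gives $x \cdot V_k = 0$. For $g$ and $u$ I would work modulo the already-established vanishing of $x$. For $g$: using the first formula in \eqref{jordan-relations-between-monomials} with $n=1$, $g y^j = \sum_{k'} \binom{j}{k'}(-1)^{k'} \tfrac{[-2]^{[k']}}{2^{k'}} x^{k'} y^{j-k'} g$, and acting on $w_k$ (where $g \cdot w_k = w_k$) all terms with $k' \geq 1$ carry a factor $x^{k'}$, hence vanish in $V_k$, leaving $g \cdot y_j = y_j$. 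For $u$: I would use the relations $ux = xu$ and $uy = yu + (1-g)$ from \eqref{relation-DH} to commute $u$ to the right past $x^0 y^j = y^j$, yielding $u \cdot w_k^{(0,j)} = y^j u \cdot w_k + (\text{terms involving } (1-g)\text{ picked up during commutation})$; since $u \cdot w_k = 0$ (as $u \in \D^{>0}$ kills the highest-weight vector $w_k$) and $(1-g)$ acts as zero on $V_k$ by the $g$-computation just done, I would conclude $u \cdot y_j = 0$.

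The main obstacle I anticipate is the bookkeeping in the $u$-computation: commuting $u$ through $y^j$ generates a sum of terms in which $(1-g)$ appears multiplied by lower powers of $y$ (and possibly $x$), and one must verify that each such term is either divisible by $x$ (hence zero in $V_k$) or annihilated because $(1-g)$ acts trivially. Rather than expand $u y^j$ by hand I would prefer an inductive argument on $j$: assuming $u \cdot y_{j-1} = 0$ and $g \cdot y_{j-1} = y_{j-1}$, compute $u \cdot y_j$ by writing $u y^j = (yu + (1-g)) y^{j-1}$ and using that, modulo $x$, the element $(1-g)$ acts as zero on all lower $y_i$. This keeps the computation local and avoids the full closed-form expansion. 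The $g$-action and $x$-action statements are comparatively immediate once the spanning claim is in place, so the induction controlling the $u$-action is the only genuinely delicate point.
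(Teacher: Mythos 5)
Your proposal is correct and follows essentially the same route as the paper, which proves the spanning claim by showing $w_k^{(1,j)}\in N_k$ by induction on $j$ using \eqref{eq:relations-H}, deduces that $x$ acts by zero, and then treats $g$ and $u$ by induction on $j$ using \eqref{relation-DH}. The one repair needed is in your first paragraph: after commuting, write the monomials with the $x$'s on the right, so that each term is $y^{j-k'}x^{k'+1}\cdot w_k = y^{j-k'}x^{k'}\cdot w_k^{(1,0)}\in N_k$ (or else make the induction on $j$ explicit, as the paper does), because the terms as you wrote them, $x^{k'}\cdot w_k^{(1,j-k')}$, lie in $N_k$ only by appeal to the very claim being proved.
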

\begin{proof}
First we claim that the class of $w_k^{(i,j)} = 0$ in $V_k$ if $i\neq 0$:
it is enough to show that $w_k^{(1,j)} = 0$ in $V_k$ what follows by induction on $j$ using \eqref{eq:relations-H}.
Then $x$ acts trivially on $V_k$ and $(y_j)_{j\in\I_{0, p-1}}$ generates linearly $V_k$. 
Also $g$ and $ u$ act trivially on these generators by induction on $j$ using \eqref{relation-DH}.
\end{proof}
The action of $D(H)$ on $V_k$ can be computed inductively:
\begin{align}\label{eq:action-Lk}
\begin{aligned}
\zeta\cdot y_j &= (k+j)y_j, & v\cdot y_j &= \frac{1}{2}j(1-2k-j) y_{j-1}, & y\cdot y_j &= y_{j+1},\\
x\cdot y_j &= 0, & u\cdot y_j &= 0, & g\cdot y_j &= y_j.
\end{aligned}
\end{align}
Let $r\in \I_{0, p-1}$ be the representative of $-2k$, i.e. $r \equiv -2k\mod p$.
Then $\widetilde{V}_k := D(H)y_{r+1}$ is a proper submodule of $V_k$ because $\D^{> 0}\cdot y_{r+1} = 0$.

\begin{prop}\label{prop:irrep-D(H)}
The module $L_k = V_k/ \widetilde{V}_k$ is simple of dimension $r+1$.
\end{prop}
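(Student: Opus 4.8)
The plan is to prove the two assertions—dimension and simplicity—separately, exploiting the explicit action \eqref{eq:action-Lk} together with the weight decomposition under $\zeta$. I would first pin down $\widetilde{V}_k$ precisely in order to read off the dimension. Since $y_{r+1}$ is a $D(\ku\Gamma)$-weight vector annihilated by $\D^{> 0}$, the triangular decomposition $\toba(V)\ot D(\ku\Gamma)\ot\toba(W)\to D(H)$ collapses to $\widetilde{V}_k = D(H) y_{r+1} = \D^{< 0}\cdot y_{r+1}$. As $x$ acts as zero by \eqref{eq:action-Lk}, the subalgebra $\D^{< 0} = \toba(V)$ acts through powers of $y$ alone, so $\widetilde{V}_k = \operatorname{span}(y_j : r+1 \le j \le p-1)$, which has dimension $p-1-r$. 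Hence $\dim L_k = r+1$, with basis given by the classes $\overline{y}_0,\dots,\overline{y}_r$.

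For simplicity, the key observation is that $\zeta$ acts on $\overline{y}_j$ by the scalar $k+j$, and the residues $k, k+1,\dots, k+r$ are pairwise distinct in $\fp$ precisely because $0\le r \le p-1$. Therefore $L_k$ decomposes as the direct sum of its one-dimensional $\zeta$-eigenspaces, and any submodule $M$ is a sum of such eigenspaces; that is, $M$ is spanned by a subset $\{\overline{y}_j : j \in S\}$.

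It then remains to show that a nonzero such $M$ is all of $L_k$. I would first descend to $\overline{y}_0$: for $1\le j \le r$ the scalar $\tfrac{1}{2}j(1-2k-j)$ appearing in $v\cdot \overline{y}_j = \tfrac{1}{2}j(1-2k-j)\,\overline{y}_{j-1}$ is nonzero, since $j\not\equiv 0$ and $1-2k-j \equiv (r+1)-j \not\equiv 0 \pmod p$ throughout this range (using $r\equiv -2k$). Thus $\overline{y}_j\in M$ forces $\overline{y}_{j-1}\in M$, and downward induction yields $\overline{y}_0\in M$. Finally the relation $y\cdot \overline{y}_j = \overline{y}_{j+1}$ lets me ascend from $\overline{y}_0$ all the way up to $\overline{y}_r$, so $M = L_k$ and $L_k$ is simple.

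The main obstacle, and really the only genuinely arithmetic point, is the non-vanishing of $\tfrac{1}{2}j(1-2k-j)$ for $1\le j\le r$: this is exactly where the choice $r\equiv -2k \pmod p$ and the range $0\le j \le r$ conspire, and it is what makes the specific value $r+1$ emerge as the dimension. The rest is bookkeeping built on the distinctness of the $\zeta$-weights.
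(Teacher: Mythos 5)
Your proof is correct, and at its core it runs on the same engine as the paper's: simplicity rests on the non-vanishing of the coefficients $\tfrac{1}{2}j(1-2k-j)$ for $1\le j\le r$ (descent via $v$) together with the ascent $y\cdot \overline{y}_j = \overline{y}_{j+1}$. The differences are in the packaging. For the dimension, you explicitly compute $\widetilde{V}_k = \D^{<0}\cdot y_{r+1} = \operatorname{span}(y_j : r+1\le j\le p-1)$ via the triangular decomposition and the fact that $x$, $D(\ku\Gamma)$ and $\D^{>0}$ act on $y_{r+1}$ by scalars or zero; the paper simply asserts that $(z_j)_{j\in\I_{0,r}}$ is a basis of $L_k$, so your version fills in that step. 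For simplicity, you first invoke the $\zeta$-eigenspace decomposition (the weights $k, k+1,\dots,k+r$ being pairwise distinct in $\fp$) to conclude that any nonzero submodule contains some basis vector $\overline{y}_j$, and then descend one step at a time; the paper instead takes an arbitrary nonzero $z=\sum_{j=0}^m c_j z_j$ with $c_m\ne 0$ and applies $v^m$, landing in $\kut z_0$ in one shot---this uses exactly the same non-vanishing product but bypasses the diagonalizability argument. Your route is slightly longer but more self-contained; the paper's is more compact at the cost of leaving both the dimension count and the verification $v^m\cdot z\in\kut z_0$ implicit.
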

It follows that $L_k = L(\lambda_k)$, the head of the Verma module $M(\lambda_k)$.

\begin{proof} Let $z_j$ be the class of $y_j$ in $L_k$; the action of $D(H)$ on the $z_j$'s is still given by \eqref{eq:action-Lk}.
Then $(z_j)_{j\in\I_{0, r}}$ is a basis of $L_k$. 
 To see that $L_k$ is simple, we show that every $0 \neq z \in L_k$ generates $L_k$. Let $z = \sum_{j=0}^{m} c_j\,z_j$ 
 with $m\leq r$ and $c_m\neq 0$. Then $ v^m\cdot z\in \kut z_0$, and $D(H)\cdot z = L_k$.
\end{proof}

\section{The double of the Jordan plane}
\subsection{Definition and basics}
Here we define an infinite-dimensional Hopf algebra $\widetilde{D}$ such that $\widetilde{D}\twoheadrightarrow D(H)$. 
We first consider two Hopf algebras $\widetilde{H}$ and $\widetilde{K}$ such that $\widetilde{H}\twoheadrightarrow H$ 
and $\widetilde{K}\twoheadrightarrow K \coloneqq (H^*)^{\operatorname{op}}$. Then $\widetilde{D} \coloneqq \widetilde{H} \bowtie_\sigma \widetilde{K}$ for a suitable 2-cocycle $\sigma$ as in \cite{dt}. The Hopf algebra $\widetilde{H}$ was studied in \cite{abff} (assuming $\car \ku =0$).

Let $\widetilde{\Gamma} = \langle \gen \rangle \simeq \Z$. 
Let $\ku[\zeta]$ be the polynomial algebra, a Hopf algebra with $\zeta$ primitive. 
We have Hopf algebra maps $\ku\widetilde{\Gamma}\twoheadrightarrow\ku\Gamma$ and $\ku[\zeta]\twoheadrightarrow\ku^\Gamma$. 
By the same formulas as in $\eqref{Eq:V_as_YD_module}$ 
and $\eqref{Eq:W_as_YD_module}$, $V\in\ydtildeG$ and $W \in \ydtildeGd$. Thus we have the following braided Hopf algebras and their bosonizations
\begin{align*}
\wtoba &= T(V)/(yx-xy+\frac{1}{2}x^2) \in \ydtildeG,& \widetilde{H} &= \wtoba\#\ku\widetilde{\Gamma}; \\
\ttoba &= T(W)/( v u- u v-\frac{1}{2} u^2)\in \ydtildeGd,& \widetilde{K} &= ( \ttoba\#\ku[\zeta])^{\operatorname{op}}.
\end{align*}

The algebras $\wtoba$ and $\ttoba$ are isomorphic to the well-known Jordan plane; but they are not isomorphic as coalgebras. 

\begin{lemma} The families $(x^iy^j\gen^{k})_{\substack{i,j\in\N_0 \\ k\in \Z }}$ and $(\zeta^i u^j v^k)_{i,j,k\in\N_0}$ are PBW-bases of $\widetilde{H}$ and $\widetilde{K}$ respectively. 
The algebra $\widetilde{H}$ is presented by generators $x,y,\gen^{\pm}$ and relations
\begin{align}\label{Relaciones-tilde-H}
\begin{aligned}
\gen x &= x\gen , & \gen y &= y\gen +x\gen , & yx &= xy - \frac{1}{2} x^2, & \gen ^{\pm} \gen ^{\mp} &= 1.\\
\end{aligned}
\end{align}
The algebra $\widetilde{K}$ is presented by generators $ u, v,\zeta$ and relations
\begin{align}\label{eq:Relaciones-tilde-H-estrella}
v\zeta = \zeta v + v, &&  u\zeta= \zeta u + u, && v u = u v-\frac{1}{2} u^2.
\end{align}
The coproducts of $\widetilde{H}$ and $\widetilde{K}$ are determined by \eqref{eq:hopf-structure-double-H-1}, respectively \eqref{eq:hopf-structure-double-H-3}.
\end{lemma}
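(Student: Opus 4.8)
The plan is to treat $\widetilde H$ and $\widetilde K$ in parallel, since each is a bosonization of a copy of the Jordan plane by a commutative Hopf algebra, and to deduce the presentations from the associated PBW bases following the same template as in Lemma~\ref{double-kgamma} and Proposition~\ref{prop:double-H}.

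First I would establish the PBW basis of the Jordan plane itself. The algebra $\wtoba = T(V)/(yx-xy+\tfrac{1}{2} x^2)$ is the Ore extension $\ku[x][y;\delta]$, where $\delta$ is the $\ku$-derivation of $\ku[x]$ determined by $\delta(x) = -\tfrac{1}{2} x^2$; indeed the relation $yx = xy - \tfrac{1}{2} x^2$ is exactly $yx = xy + \delta(x)$ with trivial twisting automorphism. By the standard theory of skew polynomial rings, $\wtoba$ is free as a left $\ku[x]$-module with basis $(y^j)_{j\in\N_0}$, so $(x^iy^j)_{i,j\in\N_0}$ is a basis. The same argument, now with $\delta(u) = \tfrac{1}{2} u^2$, shows that $\ttoba$ is free over $\ku[u]$ with basis $(u^jv^k)_{j,k\in\N_0}$. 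Since a bosonization $R\#K$ equals $R\ot K$ as a vector space, the PBW bases follow at once: $\widetilde H = \wtoba\#\ku\widetilde{\Gamma}$ has basis $(x^iy^j\gen^{k})$ because $\ku\widetilde{\Gamma}$ has basis $(\gen^{k})_{k\in\Z}$, and $\ttoba\#\ku[\zeta]$ has basis $(u^jv^k\zeta^i)$, which, passing to the opposite algebra $\widetilde K$ and reordering, gives the claimed basis $(\zeta^i u^j v^k)_{i,j,k\in\N_0}$.

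For the presentation of $\widetilde H$ I would proceed as follows. The smash-product rule $\gen\cdot r = (\gen\rightharpoonup r)\,\gen$ together with the action \eqref{Eq:V_as_YD_module} gives $\gen x = x\gen$ and $\gen y = (y+x)\gen = y\gen + x\gen$; combined with the defining relation of $\wtoba$ and $\gen^{\pm}\gen^{\mp}=1$, the relations \eqref{Relaciones-tilde-H} hold in $\widetilde H$. Letting $A$ be the algebra presented by $x,y,\gen^{\pm}$ and \eqref{Relaciones-tilde-H}, we obtain a surjection $A\twoheadrightarrow\widetilde H$. Declaring $\gen<x<y$ and using the relations to rewrite any out-of-order product in ordered form, one sees that $A$ is spanned by $(x^iy^j\gen^{k})$; since these map onto the PBW basis of $\widetilde H$ just produced, they are linearly independent, so the surjection is an isomorphism. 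The presentation of $\widetilde K$ is obtained by the identical reduction argument with the order $\zeta<u<v$, the relations \eqref{eq:Relaciones-tilde-H-estrella} arising from the action of $\zeta$ in the smash product together with the passage to the opposite algebra. Finally, the formulas \eqref{eq:hopf-structure-double-H-1} and \eqref{eq:hopf-structure-double-H-3} are the defining coproducts of the respective bosonizations; they agree with the coproducts already established on $H$ and on $H^{*\operatorname{op}}$ and involve only the generators, hence extend uniquely to $\widetilde H$ and $\widetilde K$.

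I expect the only point requiring genuine care to be the spanning step: one must choose the monomial order so that every rewriting strictly decreases it, ruling out circular or nonterminating reductions. Confluence of the rewriting system is \emph{not} needed here, because linear independence is supplied externally by the PBW basis of the bosonization---this is precisely the device already exploited in the earlier lemmas of this section.
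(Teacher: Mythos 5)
Your proof is correct and takes essentially the same route as the paper: cite/establish the PBW bases of the two Jordan planes, transfer them to the bosonizations, and obtain the presentations by the surjection-plus-ordered-spanning argument with linear independence supplied externally by those bases. The only differences are cosmetic: you justify the Jordan-plane basis via Ore extension theory where the paper simply calls it well known, and your step from the basis $(u^jv^k\zeta^i)$ to the claimed ordering in $\widetilde{K}$ implicitly uses that the reordered monomials $(v^ku^j\zeta^i)$ are also a basis (e.g.\ freeness of $\ku[u][v;\delta]$ as a \emph{right} $\ku[u]$-module), a point the paper sidesteps by starting directly from the ordering $(v^iu^j)$.
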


\begin{proof} It is well-known that $(x^iy^j)_{i,j\in\N_0}$, $( v^i u^j)_{i,j\in\N_0}$ are PBW-bases of $\wtoba$ and $\ttoba$ respectively.
Thus the first claim follows (notice the change of order in the monomials of $\widetilde{K}$).
Let $A$ and $B$ be the algebras presented by relations \eqref{Relaciones-tilde-H} and \eqref{eq:Relaciones-tilde-H-estrella} respectively. 
The relations are valid in $\widetilde{H}$ and $\widetilde{K}$, thus we have epimorphisms of Hopf algebras $A\twoheadrightarrow\widetilde{H}$ and $B\twoheadrightarrow\widetilde{K}$. 
Clearly $(\gen ^{i} x^j y^k)_{\substack{j,k\in\N_0 \\ i\in \Z }}$ and $( u^i v^j\zeta^k)_{i,j,k\in\N_0}$ are
linear generators of $A$ and $B$ respectively. Since their images by the
morphisms are linear independent, the result follows.
\end{proof}

To define $\widetilde{D}$, we need a skew-pairing $\tau$ between $\widetilde{H}$ and
$\widetilde{K}$ as in \cite{dt} i.e. a linear map $\tau\colon\widetilde{H}\otimes\widetilde{K}\longrightarrow\ku$ satisfying
\begin{align}\label{eq:skew-pairing}
\begin{aligned}
\tau(h\widetilde h\ot k) &= \tau(h \ot k_{(1)}) \tau(\widetilde h \ot k_{(2)}),&
\tau(1 \ot k) &= \varepsilon(k), & h,\widetilde h &\in\widetilde{H}\\
 \tau(h \ot \widetilde kk) &= \tau(h_{(1)} \ot k) \tau(h_{(2)} \ot \widetilde k),&
\tau(h \ot 1) &= \varepsilon(h), &k, \widetilde k &\in \widetilde{K}.
\end{aligned}
\end{align}
Then $\tau$ is convolution invertible with inverse $\tau^{-1}(b,c) = \tau(\Ss(b),c)$. 
By \eqref{eq:skew-pairing}, a skew-pairing $\tau$ is equivalent to a Hopf algebra homomorphism
$\varphi$ from $\widetilde{H}^{\operatorname{cop}}$ to the Sweedler dual ${\widetilde{K}}^{\circ}$ of $\widetilde{K}$. 

\begin{lemma}
There exists a unique skew-pairing $\tau\colon\widetilde{H}\otimes\widetilde{K}\longrightarrow\ku$
such that
\begin{align*}
\tau(x \ot u) &= 0, & \tau(y\ot u) &= 1, & \tau(\gen ^{\pm 1}\ot u) &= 0,\\
\tau(x\ot v) &= 1, & \tau(y\ot v) &= 0, & \tau(\gen ^{\pm 1}\ot v) &=0,\\
\tau(x\ot\zeta) &=0, & \tau(y\ot\zeta) &=0, & \tau(\gen ^{\pm 1}\ot\zeta) &= \pm 1. 
\end{align*}
\end{lemma}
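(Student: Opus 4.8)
The plan is to use the equivalence recalled just above the statement: giving the skew-pairing $\tau$ is the same as giving a Hopf algebra homomorphism $\varphi\colon\widetilde{H}^{\operatorname{cop}}\to\widetilde{K}^{\circ}$ via $\langle\varphi(h),k\rangle=\tau(h\otimes k)$. Uniqueness is immediate: iterating the two families of identities in \eqref{eq:skew-pairing} expresses $\tau$ on any pair of PBW monomials as a polynomial in the displayed values on the algebra generators $x,y,\gen^{\pm1}$ and $u,v,\zeta$, so the prescribed data determine $\tau$ completely. It therefore remains to prove existence, i.e.\ to produce such a $\varphi$.

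Since $\widetilde{H}^{\operatorname{cop}}$ has the same underlying algebra as $\widetilde{H}$, namely the one presented by $x,y,\gen^{\pm1}$ and the relations \eqref{Relaciones-tilde-H}, it suffices, by the universal property of this presentation, to exhibit elements $\varphi(\gen^{\pm1}),\varphi(x),\varphi(y)\in\widetilde{K}^{\circ}$ that (a) satisfy \eqref{Relaciones-tilde-H} in the convolution algebra $(\widetilde{K}^{\circ},\ast)$, and (b) carry the coproduct dictated by $\Delta^{\operatorname{cop}}$ on the generators, so that $\varphi$ is simultaneously a coalgebra map. For (b) I would first set $\varphi(\gen^{\pm1})=\chi^{\pm1}$, where $\chi\in\widetilde{K}^{\circ}$ is the character determined on generators by $\chi(u)=\chi(v)=0$, $\chi(\zeta)=1$; a one-line check against \eqref{eq:Relaciones-tilde-H-estrella} shows $\chi$ respects the defining relations of $\widetilde{K}$, hence is a well-defined grouplike element, with convolution inverse $\chi\circ\Ss$ sending $\zeta\mapsto-1$ (matching $\tau(\gen^{-1}\otimes\zeta)=-1$). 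Next I would define $\varphi(x),\varphi(y)$ as the functionals on $\widetilde{K}$ whose values on the PBW basis $(\zeta^i u^j v^k)$ are forced by the generator values through the second family in \eqref{eq:skew-pairing}; since $\widetilde{H}$ and $\widetilde{K}$ are $\N_0$-graded ($\deg x=\deg y=\deg u=\deg v=1$, $\deg\gen=\deg\zeta=0$) and $\tau$ pairs homogeneous components of equal degree, one finds concretely that $\varphi(x)$ and $\varphi(y)$ are the elements $v^{*}$ and $u^{*}$ dual to $v$ and $u$ in the PBW basis. Their left and right translates span a finite-dimensional space, so they lie in $\widetilde{K}^{\circ}$, and they are $(\varepsilon,\chi)$-skew-primitive, i.e.\ $\Delta\varphi(x)=\varepsilon\otimes\varphi(x)+\varphi(x)\otimes\chi$ and similarly for $y$, which is exactly (b).

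It then remains to verify (a): that $\chi^{\pm1},\varphi(x),\varphi(y)$ obey \eqref{Relaciones-tilde-H} under convolution. The relations $\gen^{\pm}\gen^{\mp}=1$, $\gen x=x\gen$ and $\gen y=y\gen+x\gen$ translate into $\chi\ast\chi^{-1}=\varepsilon$, $\chi\ast\varphi(x)=\varphi(x)\ast\chi$ and $\chi\ast\varphi(y)=\varphi(y)\ast\chi+\varphi(x)\ast\chi$; each is a short evaluation on the PBW basis, using that $\chi$ is grouplike and that $\varphi(x),\varphi(y)$ are $(\varepsilon,\chi)$-skew-primitive, so that only degrees $0$ and $1$ intervene. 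The genuine obstacle is the quadratic Jordan relation $yx=xy-\tfrac12 x^2$, i.e.
\[
\varphi(y)\ast\varphi(x)=\varphi(x)\ast\varphi(y)-\tfrac12\,\varphi(x)\ast\varphi(x).
\]
Both sides are supported in degree $2$, so by the grading it suffices to compare them on the span of $\zeta^i u^2,\ \zeta^i uv,\ \zeta^i v^2$; evaluating the convolution products there via the coproduct \eqref{eq:hopf-structure-double-H-3}—crucially through the term $\zeta\otimes u$ in $\Delta(v)$—and reducing with \eqref{eq:Relaciones-tilde-H-estrella}, one matches the two sides (for instance on $uv$, $v^2$ and $\zeta uv$ the identity reads $1=1$, $0=1-\tfrac12\cdot2$ and $1=1$). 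I expect this to be the main difficulty, since it is exactly where the noncommutative Jordan relation of $\widetilde{H}$ must be reconciled with the braiding of $\widetilde{K}$ encoded in $\Delta(v)$, and where the $\zeta$-twisted monomials make the bookkeeping delicate. A more conceptual alternative to (a) is available: since $(W,c_W)\cong(V,c_V^{-1})$ as braided vector spaces and $\widetilde{H},\widetilde{K}$ are bosonizations of the Jordan plane, the restriction of $\tau$ to $\wtoba\otimes\ttoba$ is the canonical braided Hopf pairing between the Jordan plane and its dual, and (a) follows from the compatibility of that pairing with the defining relation; I would use whichever route keeps the computation lightest.
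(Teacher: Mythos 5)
The paper offers no argument for this lemma (its proof is literally ``Left to the reader''), so there is no official proof to compare against; your route---passing through the equivalence, stated immediately before the lemma, between skew-pairings and Hopf algebra maps $\varphi\colon\widetilde{H}^{\operatorname{cop}}\to\widetilde{K}^{\circ}$---is exactly the natural one, and it does work. Your identifications are correct: $\varphi(\gen^{\pm1})=\chi^{\pm1}$ where $\chi$ is the character $u,v\mapsto 0$, $\zeta\mapsto 1$, and $\varphi(x)=v^{*}$, $\varphi(y)=u^{*}$ in the basis dual to the PBW basis $(\zeta^{i}u^{j}v^{k})$. These functionals are indeed $(\varepsilon,\chi)$-skew-primitive (a short check on pairs of PBW monomials, using the grading $\deg u=\deg v=1$, $\deg\zeta=0$ of $\widetilde{K}$ and the relations \eqref{eq:Relaciones-tilde-H-estrella}), hence their translates span finite-dimensional spaces and they lie in $\widetilde{K}^{\circ}$; and the uniqueness argument by expanding on generators is fine.

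Two points need repair in the write-up. First, your spot-check on $\zeta uv$ is numerically wrong: expanding $\Delta(\zeta uv)=\Delta(\zeta)\Delta(u)\Delta(v)$ one finds that $(u^{*}\ast v^{*})(\zeta uv)$, $(v^{*}\ast u^{*})(\zeta uv)$ and $(v^{*}\ast v^{*})(\zeta uv)$ all vanish, so the Jordan identity there reads $0=0$, not $1=1$. Second, ``for instance'' does not suffice: the degree-two verification must run over the whole basis $\zeta^{i}u^{2}$, $\zeta^{i}uv$, $\zeta^{i}v^{2}$, $i\in\N_0$, of the degree-two component. It does go through: on $\zeta^{i}u^{2}$ all three convolutions vanish; on $\zeta^{i}uv$ both sides equal $\delta_{i,0}$; and on $\zeta^{i}v^{2}$ one gets $(u^{*}\ast v^{*})=0$, $(v^{*}\ast u^{*})=\delta_{i,0}$ and $(v^{*}\ast v^{*})=2\delta_{i,0}$, so the identity reads $0=\delta_{i,0}-\tfrac12\cdot 2\,\delta_{i,0}$, which holds---this is precisely where the term $\zeta\ot u$ in $\Delta(v)$ is compensated by the $-\tfrac12 x^{2}$ correction in the Jordan relation. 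With these computations written out (or with your alternative argument via the canonical pairing of $\wtoba$ with its graded dual made precise), the proof is complete.
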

\pf Left to the reader.
\epf

Let $A = \widetilde{H}\ot\widetilde{K}$ with the structure of tensor product Hopf algebra.
The 2-cocycle $\sigma\colon A\ot A\longrightarrow A\ot A$ associated to $\tau$ is given by
\begin{align*}
\sigma(a\ot b, c\ot d) &= \varepsilon(a)\varepsilon(d) \tau(c\ot b), & a,b,c,d &\in A.
\end{align*}
 We define $\widetilde{D}$ as the
Hopf algebra $A$ twisted by $\sigma$, i.e $\widetilde{D}=A_\sigma = \widetilde{H} \bowtie_\sigma \widetilde{K}$.

\begin{prop}\label{prop:Dtilde} \begin{enumerate}[leftmargin=*,label=\rm{(\roman*)}] 
\item\label{prop:Dtilde-dos} The algebra $\widetilde{D}$ is presented by generators $u$, $v$, $\zeta$, $\gen ^{\pm 1}$, $x$, $y$ with relations \eqref{Relaciones-tilde-H}, \eqref{eq:Relaciones-tilde-H-estrella} and
\begin{align}\label{eq:relation-tilde-D}
\begin{aligned}
\zeta y &= y \zeta + y, & \zeta x&=x \zeta + x, & v \gen &=\gen v + \gen u,\\ 
u \gen &=\gen u , &  v x &= x v + (1-\gen ) + x u, & u x &= x u, \\
u y &=y u +(1-\gen) &  v y &=y v -\gen \zeta + y u, & \zeta \gen &= \gen \zeta.
\end{aligned} 
\end{align}
The coproduct is determined by \eqref{eq:hopf-structure-double-H-1} and \eqref{eq:hopf-structure-double-H-3}.

\smallbreak
\item\label{prop:Dtilde-uno} The following family is a PBW-basis of $\widetilde{D}$:
\begin{align}\label{eq:pbw-Dtilde}
\left\{x^n\,y^r\,\gen ^m\,\zeta^k u^i v^j: \, i,j,k,n,r\in\N_0, \ m\in\Z \right\}.
\end{align}

\smallbreak\item\label{prop:Dtilde-tres} There exists a Hopf algebra epimorphism $\widetilde{D}\twoheadrightarrow D(H)$. 

\end{enumerate}
\end{prop}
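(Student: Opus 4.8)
The plan is to use that $\widetilde{D} = \widetilde{H} \bowtie_\sigma \widetilde{K}$ is, by construction, the tensor product Hopf algebra $A = \widetilde{H}\ot\widetilde{K}$ with its multiplication altered by the $2$-cocycle $\sigma$; a cocycle twist leaves the coalgebra unchanged, so as coalgebras $\widetilde{D} = \widetilde{H}\ot\widetilde{K}$. In particular $h\mapsto h\bowtie 1$ and $k\mapsto 1\bowtie k$ are injective morphisms of Hopf algebras $\widetilde{H},\widetilde{K}\hookrightarrow\widetilde{D}$ (standard for the double cross product, see \cite{dt}), and a short counit computation gives $(h\bowtie1)(1\bowtie k)=h\ot k$, so that the multiplication map $\widetilde{H}\ot\widetilde{K}\to\widetilde{D}$ is a linear isomorphism. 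Granting this, part \ref{prop:Dtilde-uno} is immediate: the product of the PBW basis $(x^iy^j\gen^k)$ of $\widetilde{H}$ with the PBW basis $(\zeta^ku^iv^j)$ of $\widetilde{K}$, both from the preceding Lemma, is precisely the family \eqref{eq:pbw-Dtilde}.

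For the presentation \ref{prop:Dtilde-dos}, the relations \eqref{Relaciones-tilde-H} and \eqref{eq:Relaciones-tilde-H-estrella} hold because $\widetilde{H}$ and $\widetilde{K}$ sit inside $\widetilde{D}$ as subalgebras. The cross relations \eqref{eq:relation-tilde-D} come from the altered product: for $h\in\widetilde{H}$, $k\in\widetilde{K}$ one has
\begin{align*}
(1\bowtie k)(h\bowtie 1)=\sum \tau(h_{(1)},k_{(1)})\,(h_{(2)}\bowtie k_{(2)})\,\tau^{-1}(h_{(3)},k_{(3)}),\qquad \tau^{-1}(h,k)=\tau(\Ss(h),k).
\end{align*}
I would evaluate this on the nine generator pairs, $h\in\{x,y,\gen\}$ and $k\in\{\zeta,u,v\}$, using the iterated coproducts from \eqref{eq:hopf-structure-double-H-1} and \eqref{eq:hopf-structure-double-H-3}, the tabulated values of $\tau$, and the antipodes $\Ss(\gen)=\gen^{-1}$, $\Ss(x)=-\gen^{-1}x$, $\Ss(y)=-\gen^{-1}y$. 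For example $\Delta^{(2)}(y)=y\ot1\ot1+\gen\ot y\ot1+\gen\ot\gen\ot y$ and $\Delta^{(2)}(u)=u\ot1\ot1+1\ot u\ot1+1\ot1\ot u$ collapse to $uy=yu+(1-\gen)$, and the other pairs are entirely analogous. This bookkeeping of threefold coproducts and of the convolution inverse $\tau^{-1}$ across all pairs is the computational core of the proposition and the step most prone to error.

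To show the relations are complete, let $\widetilde{A}$ be the algebra they present. They hold in $\widetilde{D}$, so there is an epimorphism $\widetilde{A}\twoheadrightarrow\widetilde{D}$. Ordering the generators $x<y<\gen<\zeta<u<v$ exactly as in the proof of Proposition \ref{prop:double-H}, the cross relations \eqref{eq:relation-tilde-D} together with \eqref{Relaciones-tilde-H} and \eqref{eq:Relaciones-tilde-H-estrella} rewrite any out-of-order product as a combination of increasing monomials; hence $\widetilde{A}$ is spanned by the monomials $x^ny^r\gen^m\zeta^ku^iv^j$. These map onto the basis \eqref{eq:pbw-Dtilde} of $\widetilde{D}$, so they are linearly independent in $\widetilde{A}$; the spanning set is therefore a basis and the epimorphism is an isomorphism.

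Finally, for part \ref{prop:Dtilde-tres} I would compare presentations. Sending $\gen\mapsto g$ and fixing $x,y,\zeta,u,v$, every defining relation of $\widetilde{D}$ holds in $D(H)$: the cross relations \eqref{eq:relation-tilde-D} become \eqref{relation-DH}, while \eqref{Relaciones-tilde-H} and \eqref{eq:Relaciones-tilde-H-estrella} are weaker than \eqref{eq:relations-H}, \eqref{relations-Dgamma} and \eqref{relations-H-dual}, which additionally impose the $p$-power relations. Thus there is an algebra map $\widetilde{D}\to D(H)$; it is onto because $x,y,g,\zeta,u,v$ generate $D(H)$, and it is a coalgebra map because the coproducts agree on generators by \eqref{eq:hopf-structure-double-H-1} and \eqref{eq:hopf-structure-double-H-3}. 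Hence it is a Hopf algebra epimorphism, with kernel the ideal generated by $\gen^p-1$, $x^p$, $y^p$, $u^p$, $v^p$ and $\zeta^p-\zeta$.
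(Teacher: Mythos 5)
Your proposal is correct and follows essentially the same route as the paper: part (ii) is read off from the cocycle-twist construction (the twist leaves the underlying coalgebra, hence the vector space $\widetilde{H}\ot\widetilde{K}$, unchanged), part (i) is proved by verifying the cross relations from the twisted product, getting an epimorphism from the presented algebra, and combining the rewriting/spanning argument with the PBW basis, and part (iii) follows from the presentation since all relations hold in $D(H)$. The only difference is one of detail — the paper states that the relations hold ``by construction,'' whereas you make the twist formula and a sample computation (for $uy$) explicit, which is a worthwhile expansion rather than a deviation.
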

\begin{proof}
\ref{prop:Dtilde-uno} follows from the definition.
\ref{prop:Dtilde-dos}: Let $A$ be the algebra presented as above. By construction, the relations are valid in $\widetilde{D}$ and thus we have an epimorphism
$A\twoheadrightarrow \widetilde{D}$. 
Using the commutation relations we see that $A$ is linearly generated by \eqref{eq:pbw-Dtilde}.
By \ref{prop:Dtilde-uno}, $A\simeq \widetilde{D}$.

\ref{prop:Dtilde-tres} follows from \ref{prop:Dtilde-dos} since the relations are satisfied in $D(H)$. 
\end{proof}

\begin{remark} If we assume that $\car \ku = 0$, then
the definition of $\widetilde{D}$ makes sense  and items \ref{prop:Dtilde-dos} and \ref{prop:Dtilde-uno} of
Proposition \ref{prop:Dtilde} are also valid.
\end{remark}

We next show $\widetilde{D}$ is in fact an extension of $D(H)$ by a central Hopf subalgebra $Z$. 
We shall need the following lemma.

\begin{lemma}\label{Commutation-relations-D-tilde}
The following commutation relations  hold in $\widetilde{D}$ 
for  $n, m \in \N$:
\begin{align*}
&\begin{aligned}
\zeta^n x^m &= \sum_{\ell=0}^{n} \binom{n}{\ell}m^{n-\ell} x^m \zeta^\ell, &
\zeta^n y^m &= \sum_{\ell=0}^{n} \binom{n}{\ell}m^{n-\ell} y^m \zeta^\ell,\\
v^m \zeta^n &= \sum_{\ell=0}^{n} \binom{n}{\ell}m^{n-\ell} \zeta^\ell v^m, &
u^m \zeta^n &= \sum_{\ell=0}^{n} \binom{n}{\ell}m^{n-\ell} \zeta^\ell u^m,
\end{aligned}
\\
\gen ^n y^\ell &= \sum_{k=0}^{\ell} \binom{\ell}{k} (-1)^k\frac{[-2n]^{[k]}}{2^k} x^k y^{\ell-k} \gen ^n,\\
y^\ell x^n &= \sum_{k=0}^{\ell} \binom{\ell}{k} (-1)^k \frac{[n]^{[k]}}{2^k} x^{n+k} y^{\ell-k},\\
v^\ell u^n &= \sum_{k=0}^{\ell} \binom{\ell}{k} (-1)^k \frac{[n]^{[k]}}{2^k} u^{n+k} v^{\ell-k},\\
v^\ell \gen ^n &= \sum_{k=0}^{\ell} \binom{\ell}{k} (-1)^k\frac{[-2n]^{[k]}}{2^k} \gen ^n u^k v^{\ell-k},
\\
v x^n &= x^n v + n x^{n-1}(1-\gen ) + n x^n u,\\
v^n x &= x v^n + n x u v^{n-1} + \frac{n(n-1)}{4} x u^2 v^{n-2} + n v^{n-1}
+ \frac{n(n-1)}{2} u v^{n-2} \\ &- n \gen v^{n-1} - n(n-1) \gen u v^{n-2} 
-\frac{1}{4} n(n-1)(n-2) \gen u^2 v^{n-3},\\
v^n y &= y v^n + n y u v^{n-1} - \frac{n(n-1)}{4} y u v^{n-2} - n \gen \zeta v^{n-1} -
n(n-1) \gen \zeta u v^{n-2} \\ & -\frac{1}{4} n(n-1)(n-2) \gen \zeta u^2 v^{n-3} -
\frac{n(n-1)}{2} \gen v^{n-1} - \frac{n(n-1)^2}{2} \gen u v^{n-2} \\ &-
\frac{1}{8} n(n-1)^2 (n-2) \gen u^2 v^{n-3},\\
u y^n &= y^n u + n y^{n-1} - \sum_{k=0}^{n-1} \binom{n}{k+1} \frac{(k+1)!}{2^k} y^{n-1-k} x^k \gen ,\\
v y^n &= y^n v + n y^n u + \frac{n(n-1)}{2} y^{n-1} -
\sum_{k=0}^{n-1}\binom{n}{k+1} \frac{(k+1)!}{2^k} y^{n-1-k} x^k \gen \zeta \\ & -
\sum_{k=0}^{n-1} \binom{n}{k+1} \frac{(n-1)(k+1)!}{2^k} y^{n-1-k} x^k \gen ,\\
u^n y &= y u^n + n (1-\gen) u^{n-1},
\end{align*}
The comultiplication satisfies
\begin{align*}
\Delta(x^n) &= \sum_{k=0}^n \binom{n}{k} x^{n-k} \gen ^k \ot x^k,\\
\Delta(y^n) &= \sum_{k=0}^{n}\sum_{i=0}^{k}\binom{n}{k}\binom{k}{i} (-1)^i\frac{[n-k]^{[i]}}{2^i} 
\gen ^{n-k} x^i y^{k-i} \ot y^{n-k}.
\end{align*}
\end{lemma}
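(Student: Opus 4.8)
The plan is to prove every identity by induction on the relevant exponent, taking as base cases the degree-one relations of the presentation \eqref{Relaciones-tilde-H}, \eqref{eq:Relaciones-tilde-H-estrella} and \eqref{eq:relation-tilde-D}, and invoking the PBW basis \eqref{eq:pbw-Dtilde} whenever a normal form is to be recognised: since the listed monomials are linearly independent, each relation is determined by, and can be checked against, its expansion in that basis. I would order the verifications so that each identity uses only relations established earlier in the list, and I would treat the three structurally distinct families separately.

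First, the Heisenberg-type commutations with $\zeta$ — the four formulas $\zeta^n x^m$, $\zeta^n y^m$, $v^m\zeta^n$, $u^m\zeta^n$ — follow from the single relations $\zeta x = x\zeta + x$, $\zeta y = y\zeta + y$ of \eqref{eq:relation-tilde-D} and $v\zeta = \zeta v + v$, $u\zeta = \zeta u + u$ of \eqref{eq:Relaciones-tilde-H-estrella}. Here $\ad \zeta$ acts as multiplication by the degree, so moving $\zeta$ past $x^m$ replaces $\zeta$ by $\zeta + m$; a double induction on $m$ and $n$ (equivalently, the binomial theorem applied to $(\zeta+m)^n$) produces the coefficients $\binom{n}{\ell} m^{n-\ell}$. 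Next, the Jordan-plane commutations $\gen^n y^\ell$, $y^\ell x^n$, $v^\ell u^n$ and $v^\ell \gen^n$ reproduce verbatim the formulas \eqref{jordan-relations-between-monomials} recorded for $H$ (and their $\widetilde{K}$-analogues under $x\mapsto -u$, $y\mapsto v$): since $\wtoba$ and $\ttoba$ are Jordan planes, these are exactly the folklore identities, proved by the same induction on $\ell$ using the raising-factorial recursion $[t]^{[k]} = (t+k-1)[t]^{[k-1]}$.

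The main work, and the main obstacle, is the block of mixed cross-relations $v x^n$, $v^n x$, $v^n y$, $u y^n$, $v y^n$ and $u^n y$, where a generator of $\widetilde{K}$ is pushed past a power of a generator of $\widetilde{H}$. I would first dispose of the single-outer-factor cases $v x^n$, $u y^n$, $u^n y$ by induction on the one exponent, feeding in $vx = xv + (1-\gen) + xu$, $uy = yu + (1-\gen)$ and $ux = xu$; the $uy^n$ case already produces the Stirling-type tail $\sum_k \binom{n}{k+1}\tfrac{(k+1)!}{2^k} y^{n-1-k} x^k \gen$, whose shape I would justify by the recursion generated by $\gen y = y\gen + x\gen$. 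The genuinely heavy cases are $v^n x$, $v^n y$ and $v y^n$, whose coefficients are cubic polynomials in $n$ (for instance the terms $\tfrac14 n(n-1)(n-2)\gen u^2 v^{n-3}$ and $\tfrac18 n(n-1)^2(n-2)\gen \zeta u^2 v^{n-3}$). For these I would induct on the power of $v$, writing $v^n x = v\,(v^{n-1}x)$, substituting the inductive hypothesis, and then re-normalising by the single-factor relations $v\gen = \gen v + \gen u$, $vu = uv - \tfrac12 u^2$ and the already-known $v x$. The delicate point is that $v$ commutes with neither $\gen$ nor $u$, so each correction term spawns further corrections; verifying that the resulting telescoping binomial sums collapse to precisely the stated cubic coefficients is the tedious heart of the argument. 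Because $\widetilde{D}$ is a domain with the explicit basis \eqref{eq:pbw-Dtilde}, correctness of the final coefficient polynomials is forced and may, if desired, be confirmed by direct comparison on the basis or by a computer-algebra check; the difficulty is purely bookkeeping, not structural.

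Finally, for the comultiplication formulas I would work in $\widetilde{D}\ot\widetilde{D}$. Since $\gen x = x\gen$ gives $(\gen\ot x)(x\ot 1) = (x\ot 1)(\gen\ot x)$, the two summands of $\Delta(x) = x\ot 1 + \gen\ot x$ commute, and the ordinary binomial theorem yields $\Delta(x^n) = \sum_k \binom{n}{k} x^{n-k}\gen^k \ot x^k$. For $\Delta(y^n)$ the summands of $\Delta(y) = y\ot 1 + \gen\ot y$ do not commute — one computes $(\gen\ot y)(y\ot 1) = (y\gen + x\gen)\ot y$ — so I would expand $(y\ot 1 + \gen\ot y)^n$, collect the $\gen^{n-k} y^{k-i}\ot y^{n-k}$ contributions, and apply the already-established commutation $\gen^{a} y^{b}$ to bring the first tensor leg to normal form. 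The raising-factorial coefficient $[n-k]^{[i]}$ in the stated double sum is exactly the one produced by that commutation, so the two inductions dovetail.
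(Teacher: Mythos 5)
Your proposal is correct and follows exactly the route the paper takes: the paper's entire proof of this lemma is ``Straightforward by induction,'' and your outline---induction on the exponents using the degree-one relations \eqref{Relaciones-tilde-H}, \eqref{eq:Relaciones-tilde-H-estrella}, \eqref{eq:relation-tilde-D}, with the PBW basis \eqref{eq:pbw-Dtilde} certifying normal forms, and the binomial-type expansion of $\Delta(x)$, $\Delta(y)$ in $\widetilde{D}\ot\widetilde{D}$ for the coproduct formulas---is precisely the intended argument, spelled out in more detail than the authors chose to record.
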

\begin{proof} Straightforward by induction. \end{proof}

For our next statement we need to set up the notation. 
Let
\begin{align}\label{eq:alggroup-B}
\Bb = \left((\Gb_a \times \Gb_a) \rtimes \Gb_m\right) \times \Hb_3
\end{align}
 be the algebraic group that in the first factor has the semidirect product where $\Gb_m$ acts on $\Gb_a \times \Gb_a$
by 
$\lambda_ \cdot(r_1, r_2) =(\lambda r_1, \lambda r_2)$, $\lambda \in \kut$, $r_1, r_2\in\ku$ while in the second factor
appears the Heisenberg group $\Hb_3$ i.e. the group of upper triangular matrices with ones in the diagonal.
(The first factor is not the same as the group in Remark \ref{rem:algebraic-group-normal-hopfsubalgebra}).

\begin{prop}\label{prop-D-tilde-extension}
Let $Z$ be the subalgebra of $\widetilde{D}$ generated by the elements $\gen ^p$, $x^p$, $y^p$, $u^p$, $v^p$ and $\zeta^{(p)} \coloneqq \prod_{i=1}^{p} (\zeta +i -1) = \zeta^p - \zeta$.
Then 
\begin{enumerate}[leftmargin=*,label=\rm{(\roman*)}]
\item\label{Z-1} $Z$ is a central Hopf subalgebra of $\widetilde{D}$.

\smallbreak
\item\label{Z-2} $\widetilde{D}$ is a finitely generated free $Z$-module.

\smallbreak
\item\label{Z-3} $Z\xhookrightarrow[]{\iota} \widetilde{D} \xrightarrowdbl[]{\pi} D(H)$ is short exact sequence of Hopf algebras.

\smallbreak
\item\label{Z-4} $Z\simeq \ku[T^{\pm}]\ot\ku[X_1,\dots,X_5]$ as an algebra. In particular $Z$ is a domain.

\smallbreak
\item\label{Z-5} $Z\simeq \cO(\Bb)$ as Hopf algebras.
\end{enumerate}
\end{prop}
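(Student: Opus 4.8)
The plan is to handle the six algebra generators of $Z$ one at a time, first establishing centrality, then their coalgebra behaviour, and finally reading off the structural assertions. \emph{Centrality.} I would show that $\gen^p$, $x^p$, $y^p$, $u^p$, $v^p$ and $\zeta^{(p)}$ lie in the centre of $\widetilde{D}$ by specializing the commutation relations of Lemma~\ref{Commutation-relations-D-tilde} at exponent $p$. In each case every correction term carries a coefficient that vanishes in $\ku$: a binomial $\binom{p}{k}$ with $0<k<p$, a raising factorial $[-2n]^{[k]}$, $[n]^{[k]}$ or $[0]^{[i]}$ evaluated either at a multiple of $p$ or over $p$ consecutive integers, a power $p^{\,n-\ell}$, or an explicit scalar divisible by $n=p$ (the denominators $2,4,8$ being units since $p$ is odd). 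For $\zeta^{(p)}$ I would instead use $\zeta x=x(\zeta+1)$, $\zeta y=y(\zeta+1)$, $u\zeta=(\zeta+1)u$, $v\zeta=(\zeta+1)v$ together with $(\zeta+1)^p=\zeta^p+1$ to see that $\zeta^p-\zeta$ commutes with $x,y,u,v$ (and trivially with $\gen,\zeta$). Thus $Z$ is a commutative central subalgebra.

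\emph{Coproducts.} Evaluating the formulas for $\Delta(x^n)$ and $\Delta(y^n)$ of Lemma~\ref{Commutation-relations-D-tilde} at $n=p$ gives $\Delta(x^p)=x^p\ot 1+\gen^p\ot x^p$ and $\Delta(y^p)=y^p\ot 1+\gen^p\ot y^p$, while primitivity of $\zeta$ and $u$ together with $\car\ku=p$ shows that $\zeta^{(p)}$ and $u^p$ are primitive. The main obstacle is $\Delta(v^p)=\Delta(v)^p$. Writing $\Delta(v)=P+C$ with $P=v\ot 1+1\ot v$ and $C=\zeta\ot u$, I would compute $(P+C)^p$ by Jacobson's $p$-th power formula. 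Here $P^p=v^p\ot 1+1\ot v^p$ because the two summands commute, and $C^p=\zeta^p\ot u^p$; the Lie correction $\sum_{i=1}^{p-1}s_i(P,C)$ is a sum of iterated brackets of $P$ and $C$ which, after a lengthy but mechanical reduction, collapses to $-\zeta\ot u^p$. Hence $\Delta(v^p)=v^p\ot 1+1\ot v^p+(\zeta^p-\zeta)\ot u^p=v^p\ot 1+1\ot v^p+\zeta^{(p)}\ot u^p\in Z\ot Z$. I expect this bracket computation to be the technical core of the proof.

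\emph{Parts \ref{Z-4}, \ref{Z-5} and the Hopf half of \ref{Z-1}.} The PBW basis \eqref{eq:pbw-Dtilde} shows that distinct monomials in $\gen^{\pm p},x^p,y^p,\zeta^{(p)},u^p,v^p$ have distinct leading PBW terms $x^{pn}y^{pr}\gen^{pm}\zeta^{pk}u^{pi}v^{pj}$ (the leading term of $(\zeta^{(p)})^{k}$ being $\zeta^{pk}$), so the six elements are algebraically independent with $\gen^p$ a unit; this gives $Z\simeq\ku[T^{\pm}]\ot\ku[X_1,\dots,X_5]$, proving \ref{Z-4}. For \ref{Z-5} and the Hopf-subalgebra part of \ref{Z-1} I would define $\Phi\colon\cO(\Bb)\to\widetilde{D}$ sending the Laurent coordinate of the $\Gb_m$-factor to $\gen^p$, the two $(1,T)$-skew-primitive coordinates of $(\Gb_a\times\Gb_a)\rtimes\Gb_m$ to $x^p$ and $y^p$, and the Heisenberg coordinates $a,b,c$ of $\Hb_3$ to $\zeta^{(p)},u^p,v^p$. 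Commutativity of the image makes $\Phi$ an algebra map, and the coproduct computations above say precisely that $\Phi$ intertwines comultiplication (the Heisenberg relation $\Delta(c)=c\ot 1+1\ot c+a\ot b$ matching $\Delta(v^p)$), so $\Phi$ is a morphism of Hopf algebras. Its image is $Z$, and comparison with \ref{Z-4} shows $\Phi$ is injective; thus $Z\simeq\cO(\Bb)$, and since the image of a Hopf algebra under a bialgebra map is a Hopf subalgebra, $Z$ is a central Hopf subalgebra, completing \ref{Z-1}.

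\emph{Parts \ref{Z-2} and \ref{Z-3}.} For \ref{Z-2} I would expand the exponents of the PBW basis in base $p$: using that $x^p,y^p,u^p,v^p,\gen^{\pm p}$ are central and that $\ku[\zeta]$ is free over $\ku[\zeta^p-\zeta]=\ku[\zeta^{(p)}]$ on $1,\zeta,\dots,\zeta^{p-1}$, one sees that $\widetilde{D}$ is free over $Z$ on the $p^6$ monomials $x^ay^b\gen^c\zeta^du^ev^f$ with $a,b,c,d,e,f\in\I_{0,p-1}$. Finally, for \ref{Z-3}, comparing the presentation of $\widetilde{D}$ in Proposition~\ref{prop:Dtilde} with that of $D(H)$ identifies $\widetilde{D}/\widetilde{D}Z^+$ with $D(H)$, the extra relations $\gen^p=1$, $x^p=y^p=u^p=v^p=0$, $\zeta^p=\zeta$ being exactly $Z^+=0$; hence $\ker\pi=\widetilde{D}Z^+$ for the map $\pi$ of Proposition~\ref{prop:Dtilde}. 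Injectivity of $\iota$ follows from \ref{Z-4}, faithful flatness from \ref{Z-2}, and centrality makes $Z$ stable under the left adjoint action, so Remark~\ref{remark-exact-sequence-hopf} supplies the remaining condition $\iota(Z)=\widetilde{D}^{\,\operatorname{co}\pi}$ and \ref{Z-3} follows.
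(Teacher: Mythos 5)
Your proposal is correct in all five parts and follows the paper closely in \ref{Z-2}, \ref{Z-3} and \ref{Z-4} (freeness of $\ku[\zeta]$ over $\ku[\zeta^{(p)}]$ on $1,\zeta,\dots,\zeta^{p-1}$, identification of $\ker\pi$ with $\widetilde{D}Z^+$ plus Remark \ref{remark-exact-sequence-hopf}, and algebraic independence read off the PBW basis), but it takes a genuinely different route at the two remaining points. For the crucial identity \eqref{comultiplication-v-p}, the paper never works in $\widetilde{D}\ot\widetilde{D}$: it uses that $v$ lies in the bosonization $\widetilde{K}=(\ttoba\#\ku[\zeta])^{\operatorname{op}}$, proves by induction closed formulas for the braided coproduct $\Delta_{\ttoba}(v^n)$ and the coaction $\delta(v^n)$ (with Stirling coefficients), and then specializes at $n=p$ via \eqref{eq:stirling}, i.e.\ $\stirling{p}{k}\equiv 0$ for $2\le k\le p-1$, $\stirling{p}{1}\equiv-1$, $\stirling{p}{p}=1$. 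Your Jacobson-formula route does work: the iterated brackets of $P=v\ot 1+1\ot v$ and $C=\zeta\ot u$ stay inside $\ku P+\ku C+\operatorname{span}\{v\ot u^k,\,\zeta\ot u^k\colon k\ge 1\}$, and the correction indeed collapses to $-\zeta\ot u^p$ (one can check $p=3$ directly, and in general the $\zeta\ot u^p$-coefficient comes out as $-(p-1)!\,/\,2^{p-1}\equiv 1$ by Wilson and Fermat while the $v\ot u^{p-1}$-contributions cancel). Be aware, however, that this ``lengthy but mechanical reduction'' is the entire content of part \ref{Z-1} and is not shorter than the paper's induction; as written you assert it rather than prove it, so in a final writeup this bracket computation must actually be carried out. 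Finally, for \ref{Z-5} you construct a Hopf algebra map $\cO(\Bb)\to\widetilde{D}$ coordinate by coordinate, whereas the paper argues in the opposite direction: by \ref{Z-4} the algebra $Z$ is reduced commutative, hence $Z=\cO(G)$ for $G=\Alg(Z,\ku)\simeq\kut\times\ku^5$, and the group law computed from the coproduct, formula \eqref{Realization_of_B}, is visibly that of $\Bb$. The two arguments are equivalent; yours needs the presentation of $\cO(\Bb)$ as input, the paper's needs only to recognize the resulting group law.
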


\begin{proof}
\ref{Z-1} By Lemma \ref{Commutation-relations-D-tilde} $Z$ is a central subalgebra of $\widetilde{D}$;
$u^p,\zeta^{(p)}\in\Pc(\widetilde{D})$ and $\gen ^p\in G(\widetilde{D})$. Also $\Delta(x^p) = x^p\ot 1 + \gen ^p\ot x^p$ and
$\Delta(y^p) = y^p\ot 1 + \gen ^p\ot y^p$.
It only remains to show that
\begin{align}\label{comultiplication-v-p}
\Delta(v^p) = v^p \ot 1 + 1 \ot v^p +\zeta^{(p)}\ot u^p.
\end{align} 
Since $v\in\widetilde{K} = ( \ttoba\#\ku[\zeta])^{\operatorname{op}}$ it is enough to know the coaction $\delta(v^p)$ and the braided comultiplication of $v^p$
in $\ttoba$. It can be proved inductively that
\begin{align*}
\Delta_{\ttoba}(v^n) &= v^n\ot 1 + 1\ot v^n + \sum_{k=1}^{n-1} 
\sum_{i=0}^{k}\binom{n}{k}\binom{k}{i} \frac{[n-k]^{[i]}}{2^i}  v^{n-k}\ot u^i v^{k-i},\\
\delta(v^n) &= 1\ot v^n + \zeta^n \ot u^n + \sum_{k=1}^{n-1} \sum_{t=k}^{n}\binom{n}{t}\stirling{t}{k} \frac{1}{2^{t-k}}
\zeta^k\ot u^t v^{n-t},
\end{align*}
 for $n\in\N$.
Since $\zeta^{(p)} = \sum_{k=0}^{p} \stirling{p}{k} \zeta^k = \zeta^p - \zeta$, we have 
\begin{align}\label{eq:stirling}
\stirling{p}{k} &= 0, \ k=2,\dots,p-1, &
\stirling{p}{p}&=1, &\text{ and }\stirling{p}{1} &= -1.
\end{align} 
Then $\delta(v^p) = 1\ot v^p + \zeta^{(p)}\ot u^p$,  
$\Delta_{\ttoba}(v^p) = v^p\ot 1 + 1\ot v^p$, and  \eqref{comultiplication-v-p} follows.

\smallbreak
\ref{Z-2} To prove this we consider another basis of $\widetilde{D}$ using a different basis of $\ku[\zeta]$. The family of
polynomials 
\begin{align*}
\left\{(\zeta^{(p)})^k \zeta^j: \ k\in\N_0, \, j\in\I_{0, p-1} \right\}
\end{align*}
is a basis of $\ku[\zeta]$. Indeed, these polynomials are linearly independent since
they have different degrees. We prove the polynomials $\zeta^n$ can be written as linear combinations of them by induction on $n$.
The case $n=0$ is obvious. If $\zeta^n = \sum_{k=0}^{n}\sum_{j=0}^{p-1} a_{k,j} (\zeta^{(p)})^k \zeta^j$ for some 
constants $a_{k,j}$ then
\begin{align*}
\zeta^{n+1}& = \sum_{k=0}^{n}\sum_{j=0}^{p-1} a_{k,j} (\zeta^{(p)})^k \zeta^{j+1} =\\
&=
\sum_{k=0}^{n}\sum_{j=1}^{p-1} a_{k,j-1} (\zeta^{(p)})^k \zeta^{j} + \sum_{k=0}^{n} a_{k,p-1} (\zeta^{(p)})^{k+1}
+ \sum_{k=0}^{n} a_{k,p-1} (\zeta^{(p)})^{k} \zeta.
\end{align*}
By Proposition \ref{prop:Dtilde} \ref{prop:Dtilde-uno}, the following is a linear basis of $\widetilde{D}$:
\begin{align*}
\left\{x^n\,y^r\,\gen ^m\,(\zeta^{(p)})^k \zeta^\ell u^i v^j : \ i,j,k,n,r\in\N_0, \ m\in\Z, \ \ell\in\I_{0, p-1} \right\}
\end{align*} 
Hence the following is a basis of $\widetilde{D}$ as $Z$-module:
\begin{align*}
\left\{x^n\,y^r\,\gen ^m\, \zeta^\ell u^i v^j : \ i,j,\ell ,n,r, m\in\I_{0, p-1} \right\}.
\end{align*} 

\smallbreak
\ref{Z-3} By Propositions \ref{prop:Dtilde} and \ref{prop:double-H}, $\ker \pi$ is the ideal generated by
\begin{align*}
&x^p, & &y^p,& &\gen ^p-1,& &u^p,& &v^p& &\text{ and } &\zeta^{(p)}.
\end{align*}
 This is clearly equal to $\widetilde{D}\iota(Z)^+$. Then
Remark \ref{remark-exact-sequence-hopf}, \ref{Z-1} and \ref{Z-2} apply.

\smallbreak
\ref{Z-4} By the proof of \ref{Z-2}, the following is a basis of $Z$:
\begin{align*}
\left\{x^{pn}\,y^{pr}\,\gen ^{pm}\, (\zeta^{(p)})^k u^{pi} v^{pj} : \ i,j,k ,n,r\in\N_0 ,\, m\in\Z \right\}.
\end{align*} 
Hence the following assignement
\begin{align*}
T &\mapsto \gen ^p,& X_1&\mapsto x^p,& X_2&\mapsto y^p, &
X_3&\mapsto \zeta^{(p)},& X_4&\mapsto u^p, &X_5 &\mapsto v^p.
\end{align*}
 provides an algebra isomorphism
$Z\simeq \ku[T^{\pm}]\ot\ku[X_1,\dots,X_5]$.

\smallbreak
\ref{Z-5} By \ref{Z-4} $Z$ is a commutative Hopf algebra without nilpotent elements. Then $Z\simeq \cO(G)$ for the
algebraic group $G=\Alg(Z,\ku) \simeq \ku^\times\times\ku^5$. The multiplication of $G$ is induced from the
comultiplication of $Z$ and is given for $\omega = (\lambda,r_1,r_2,t_1,t_2,t_3)$, $\omega' = (\lambda',r_1',r_2',t_1',t_2',t_3')\in \ku^\times\times\ku^5$ by
\begin{align}\label{Realization_of_B}
\omega\cdot\omega' =(\lambda \lambda',r_1+\lambda r_1',r_2+\lambda r_2',t_1+t_1',t_2+t_2', t_3+t_3'+t_1 t_2').
\end{align}
Clearly $G=\Bb$ as claimed.
\end{proof}

\subsection{Another exact sequence} 
In this Subsection we start assuming that $\car \ku \neq 2$, i.e. $\car \ku = 0$ is also allowed.
Let $\{h,e,f\}$ be the Cartan generators of $\spl_2(\ku)$. 
Let $\pi\colon \widetilde{D}\longrightarrow U(\spl_2(\ku))$ be the Hopf algebra map given by
 $x\mapsto 0$, $y\mapsto \frac{1}{2} e$, $u\mapsto 0$, $v\mapsto f$, $\gen\mapsto 1$ and $\zeta \mapsto h$. 
 Let $C$ be the subalgebra of $\widetilde{D}$ generated by $x$, $u$ and $\gen$; clearly it is a normal Hopf subalgebra. 
 Let $\Gb$ be the algebraic group defined in Remark \ref{rem:algebraic-group-normal-hopfsubalgebra}.

 \begin{prop}\label{prop:another-exseq}
 \begin{enumerate}[leftmargin=*,label=\rm{(\roman*)}]
 \item\label{D-1} $C\simeq \cO(\Gb)$ as Hopf algebras.
 \item\label{D-2} There is an exact sequence of Hopf algebras 
 \begin{align*}
 \cO(\Gb) \hookrightarrow \widetilde{D} \xrightarrow{\pi} U(\spl_2(\ku)).
 \end{align*}
 \end{enumerate}
 \end{prop}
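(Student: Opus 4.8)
The plan is to establish parts \ref{D-1} and \ref{D-2} in sequence, treating the exact sequence as the ultimate goal and the identification $C \simeq \cO(\Gb)$ as the key structural input. First I would pin down $C$ as an algebra. Since $x$, $u$ and $\gen$ pairwise commute (by \eqref{Relaciones-tilde-H}, \eqref{eq:Relaciones-tilde-H-estrella} and \eqref{eq:relation-tilde-D}: $\gen x = x\gen$, $u\gen = \gen u$, $u x = x u$), the subalgebra $C$ is commutative. Using the PBW-basis \eqref{eq:pbw-Dtilde} of $\widetilde{D}$, the monomials $x^n \gen^m u^i$ with $n,i \in \N_0$, $m \in \Z$ are linearly independent, so $C$ is the free commutative algebra on $x$, $u$ with $\gen^{\pm 1}$ adjoined, i.e. $C \simeq \ku[x,u,\gen^{\pm 1}]$ as an algebra. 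This matches $\cO(\Gb) \simeq \ku[X_1, X_2, T^{\pm 1}]$ recorded in Remark \ref{rem:algebraic-group-normal-hopfsubalgebra}.

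For \ref{D-1}, I would promote this algebra isomorphism to a Hopf algebra isomorphism. Since $C$ is commutative and (being generated by $x$, $u$, $\gen$, whose coproducts $\Delta(x) = x\ot 1 + \gen \ot x$, $\Delta(u) = u \ot 1 + 1 \ot u$, $\Delta(\gen) = \gen \ot \gen$ involve only $C$) a Hopf subalgebra with no nilpotents, it is $\cO(G)$ for the affine algebraic group $G = \Alg(C, \ku)$. I would then compute the group law on $G \simeq \ku^\times \times \ku^2$ directly from the coproducts, exactly as in the proof of Proposition \ref{prop-D-tilde-extension}\ref{Z-5}: writing a character by its values $(\lambda, s, t)$ on $(\gen, x, u)$, convolution gives the multiplication. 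The element $\gen$ is group-like, so its coordinate multiplies; $u$ is primitive, so its coordinate adds; and $x$ is $(\gen, 1)$-skew-primitive, so its coordinate picks up a factor of $\lambda$, reproducing the semidirect-product action $\lambda \cdot (t_1, t_2) = (t_1, \lambda t_2)$ of $\Gb_m$ on $\Gb_a \times \Gb_a$ described in Remark \ref{rem:algebraic-group-normal-hopfsubalgebra}. Hence $G \simeq \Gb$ and $C \simeq \cO(\Gb)$.

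For \ref{D-2}, I would verify the four axioms of an exact sequence for $C \hookrightarrow \widetilde{D} \xrightarrow{\pi} U(\spl_2(\ku))$. Surjectivity of $\pi$ is immediate since $h, e, f$ are all hit. Injectivity of the inclusion is clear. For condition \ref{suc-exacta-3}, I would show $\ker \pi = \widetilde{D}\, C^+$: since $\pi$ sends $x$, $u$, $\gen - 1$ to $0$ and these generate $C^+$ as an ideal of $C$, the two-sided ideal $\widetilde{D}\, C^+$ is generated by $x$, $u$, $\gen - 1$, and one checks the quotient $\widetilde{D}/\widetilde{D}\, C^+$ is presented by the images of the remaining generators $y, v, \zeta$ subject to the relations inherited from \eqref{Relaciones-tilde-H}, \eqref{eq:Relaciones-tilde-H-estrella}, \eqref{eq:relation-tilde-D}; setting $x = u = 0$ and $\gen = 1$ collapses these to the $\spl_2$ relations $\zeta v = v\zeta + v$, $\zeta y = y\zeta + y$, $v y = y v - \zeta$, which under $\zeta \mapsto h$, $y \mapsto \tfrac12 e$, $v \mapsto f$ are precisely the defining relations of $U(\spl_2(\ku))$. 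Thus $\pi$ induces an isomorphism $\widetilde{D}/\widetilde{D}\, C^+ \simeq U(\spl_2(\ku))$, giving \ref{suc-exacta-3}.

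The main obstacle is condition \ref{suc-exacta-4}, namely $C = \widetilde{D}^{\operatorname{co}\pi}$. Here I would appeal to Remark \ref{remark-exact-sequence-hopf}: since $C$ is normal (noted before the proposition) and the inclusion $C \hookrightarrow \widetilde{D}$ is faithfully flat, conditions \ref{suc-exacta-1}--\ref{suc-exacta-3} imply \ref{suc-exacta-4}. Faithful flatness is the delicate point in this infinite-dimensional, noncommutative setting; I expect to deduce it from the PBW-basis \eqref{eq:pbw-Dtilde}, which exhibits $\widetilde{D}$ as a free left (and right) $C$-module on the monomials $y^r \zeta^k v^j$ — the factors complementary to $x^n \gen^m u^i$ — so freeness, hence faithful flatness, holds by the same monomial-ordering argument already used for the $Z$-freeness in Proposition \ref{prop-D-tilde-extension}\ref{Z-2}. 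With faithful flatness and normality in hand, Remark \ref{remark-exact-sequence-hopf} closes the argument.
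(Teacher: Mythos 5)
Your proposal is correct and takes essentially the same route as the paper: part \ref{D-1} from the PBW-basis (with the group law read off from the coproducts as in Proposition \ref{prop-D-tilde-extension}\ref{Z-5}), and part \ref{D-2} by identifying $\ker\pi = \langle x, u, \gen-1\rangle = \widetilde{D}\,C^+$ and then invoking Remark \ref{remark-exact-sequence-hopf} via normality and faithful flatness deduced from PBW-freeness; you merely spell out details the paper leaves implicit. One cosmetic slip: the collapsed relation should read $v\zeta = \zeta v + v$ (i.e. $\zeta v = v\zeta - v$), not $\zeta v = v\zeta + v$, but this does not affect the argument.
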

 \begin{proof}
 \ref{D-1} follows from the definition using the PBW-basis.

 \ref{D-2} By the normality of $\cO(G)$, $\ker \pi = \left\langle x, u, \gen-1 \right\rangle = \widetilde{D} \cO(G)^+$. 
 Then Remark \ref{remark-exact-sequence-hopf} applies: 
 $\cO(G)$ is stable by the adjoint action and $\widetilde{D}$ is a free module over $\cO(G)$ by the PBW-basis, so the inclusion is faithfully flat.
 \end{proof}
 
 We come back to the assumption $\car \ku > 2$. Recall the commutative diagram
 \begin{align}\tag{\ref{eq:diagram-exact sequences}}
 \begin{aligned}
 \xymatrix{  \cO(\Gb)  \ar@{^{(}->}[r] \ar@{^{(}->}[d]_{\operatorname{Fr}} & \cO(\Bb) \ar@{^{(}->}[d] \ar@{->>}[r] & \cO(\Gb_a^3) \ar@{^{(}->}[d]
 \\
  \cO(\Gb)  \ar@{^{(}->}[r] \ar@{->>}[d] & \widetilde{D} \ar@{->>}[r] \ar@{->>}[d] & U(\spl_2(\ku)) \ar@{->>}[d]
 \\
  \nucleo \ar@{^{(}->}[r] & D(H) \ar@{->>}[r] & \mathfrak u(\spl_2(\ku)).
 }
 \end{aligned}
 \end{align}
 
\begin{prop}\label{prop:diagram-exact sequences}
All columns and rows in \eqref{eq:diagram-exact sequences} are exact sequences. 
\end{prop}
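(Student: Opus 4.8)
The plan is to assemble the statement from the five pieces already at hand and to establish only what is genuinely new. Indeed, the bottom row is Proposition \ref{prop:double-H-ext}, the middle row $(\star)$ is Proposition \ref{prop:another-exseq}, the middle column is Proposition \ref{prop-D-tilde-extension} (using $\cO(\Bb)\simeq Z$ from part \ref{Z-5}), and the left column is the content of Remark \ref{rem:algebraic-group-normal-hopfsubalgebra}. Hence it remains to prove exactness of the top row and of the right column, and to check that the four inner squares commute; the latter I would verify on generators, since every arrow is a Hopf algebra map.

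First I would pin down each arrow. Under the identification of Proposition \ref{prop:another-exseq}\ref{D-1} the mid-left copy $C=\cO(\Gb)$, generated by $x,u,\gen$, has its Laurent generators $T,X_1,X_2$ corresponding to $\gen,u,x$; comparing the composite of the Frobenius (left column) with $C\hookrightarrow\widetilde{D}$ against the middle column $\cO(\Bb)=Z\hookrightarrow\widetilde{D}$ reads off the top-row inclusion $\cO(\Gb)\hookrightarrow\cO(\Bb)$ as $T\mapsto \gen^p$, $X_1\mapsto u^p$, $X_2\mapsto x^p$, whose image is the subalgebra $\ku[\gen^{\pm p},u^p,x^p]\subseteq Z$; this is abstractly $\cO(\Gb)$ because $\gen^p$ is grouplike, $u^p$ is primitive and $\Delta(x^p)=x^p\ot 1+\gen^p\ot x^p$, and it makes the top-left square commute by construction. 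Since $\pi(y^p)=(\tfrac12 e)^p=\tfrac12 e^p$, $\pi(v^p)=f^p$ and $\pi(\zeta^{(p)})=h^p-h$, the right-column inclusion $\cO(\Gb_a^3)\hookrightarrow U(\spl_2(\ku))$ sends the three primitives $y^p,\zeta^{(p)},v^p$ to $\tfrac12 e^p,\,h^p-h,\,f^p$, forcing the top-right square to commute. The two lower squares commute because $\widetilde{D}\twoheadrightarrow D(H)$ sends $\gen,u,x\mapsto g,u,x$ and $y,v,\zeta\mapsto y,v,\zeta$, matching the left column, the bottom row, and Proposition \ref{prop:double-H-ext}.

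For the right column I would invoke the standard structure of the restricted enveloping algebra: the kernel of $U(\spl_2(\ku))\twoheadrightarrow \mathfrak u(\spl_2(\ku))$ is the ideal generated by $e^p,f^p,h^p-h$, these are central, primitive and algebraically independent, and $U(\spl_2(\ku))$ is free over the polynomial Hopf subalgebra they generate by PBW; that subalgebra is therefore $\simeq\cO(\Gb_a^3)$, conditions \ref{suc-exacta-1}--\ref{suc-exacta-3} hold by inspection, and \ref{suc-exacta-4} follows from Remark \ref{remark-exact-sequence-hopf} (the subalgebra is central, hence ad-stable, and the inclusion is faithfully flat). For the top row I would work entirely inside the polynomial Hopf algebra $Z=\cO(\Bb)$ of Proposition \ref{prop-D-tilde-extension}\ref{Z-4}: the image $\ku[\gen^{\pm p},u^p,x^p]$ is a coordinate subalgebra over which $Z$ is free, and $Z/Z\langle \gen^p-1,u^p,x^p\rangle=\ku[y^p,\zeta^{(p)},v^p]$; modulo these relations the coproduct $\Delta(v^p)=v^p\ot 1+1\ot v^p+\zeta^{(p)}\ot u^p$ collapses to the primitive one, so the quotient is the Hopf algebra $\cO(\Gb_a^3)$ on three primitives. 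Conditions \ref{suc-exacta-1}--\ref{suc-exacta-3} are then immediate from the free polynomial structure and \ref{suc-exacta-4} again follows from Remark \ref{remark-exact-sequence-hopf}. With all six sequences exact and the four squares commuting, the proposition follows.

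The main obstacle is not a single deep step but the bookkeeping: one must track the several copies of $\cO(\Gb)$ and of $\cO(\Gb_a^3)$ and confirm that the Frobenius twist of the left column, the coordinate subalgebra $\ku[\gen^{\pm p},u^p,x^p]\subseteq Z$, and the $p$-power central structures in $U(\spl_2(\ku))$ all align so that every square commutes. A point deserving care is that the semidirect-product structure of the first factor of $\Bb$ differs from that of $\Gb$; this is harmless here, since the top row only involves the coordinate subalgebras $\ku[\gen^{\pm p},u^p,x^p]$ and $\ku[y^p,\zeta^{(p)},v^p]$ and the quotient between them, but it must be kept in mind when naming the abstract Hopf algebras.
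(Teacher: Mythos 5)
Your proposal is correct and follows essentially the same route as the paper: the same reduction to the previously proved rows/columns, the same identification of $Z'=\ku[e^p,f^p,h^p-h]\simeq\cO(\Gb_a^3)$ via the PBW basis for the right column, and the same maps $\gen\mapsto\gen^p$, $u\mapsto u^p$, $x\mapsto x^p$ and $y^p\mapsto\tfrac12 e^p$, $\zeta^{(p)}\mapsto h^p-h$, $v^p\mapsto f^p$ for the top row, with exactness concluded in both cases from freeness (faithful flatness) and ad-stability via Remark \ref{remark-exact-sequence-hopf}. Your explicit verification that the four squares commute on generators is a small addition the paper leaves implicit, but it does not change the argument.
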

 
 \begin{proof}
The middle row is exact by the previous proposition and the bottom by Proposition \ref{prop:double-H-ext}.
For the rightmost column we need to prove that the Hopf subalgebra  $Z' = \left\langle e^p, f^p, h^p-h \right\rangle$ of $U(\spl_2(\ku))$ is $\cO(\Gb_a^3)$. Taking the
basis $B=\{f^i (h^p-h)^j h^k e^\ell\colon i,j,\ell\in \N_0, k\in\I_{0, p-1}\}$ of $U(\spl_2(\ku))$ we 
see that the assignment $X_1\mapsto f^p$, $X_2 \mapsto h^p - h$ and $X_3\mapsto e^p$  gives an algebra isomorphism 
$Z'\simeq \ku[X_1,X_2,X_3] \simeq \cO(\Gb_a^3)$. Comparing comultiplications, the previous isomorphism is of Hopf algebras. $\cO(\Gb_a^3)$ is stable 
by the adjoint action of  $U(\spl_2(\ku))$ and a free module over $\cO(\Gb_a^3)$ using the basis $B$, then Remark \ref{remark-exact-sequence-hopf} applies and the column is exact.

 We next describe
explicitly the top arrow. $\phi\colon \cO(\Gb) \to \cO(\Bb)$ is given by $x\mapsto x^p$, $u\mapsto u^p$ and $\gen \mapsto \gen^p$, and
$\psi \colon \cO(\Bb) \longrightarrow \cO(\Gb_a^3)$ is given by 
\begin{align*}
x^p&\mapsto 0,& y^p &\mapsto \frac{1}{2} e^p,& u^p &\mapsto 0,&
v^p&\mapsto f^p,& \gen^p &\mapsto 1,& \zeta^{(p)} &\mapsto h^p - h.
\end{align*}

It follows $\phi$ is a injective since it maps a basis into a linearly independent set and $\psi$ is surjective since the PBW-basis of
$\cO(\Gb_a^3)$ is in its image. Clearly $\ker \psi \supseteq \cO(\Bb) \phi(\cO(\Gb))^+$, the other inclusion follows easily using the basis for $\cO(\Bb)$ given by
\begin{align*}
&\left\{x^{pn}\,y^{pr}\,(\gen^p-1)^m\,(\zeta^{(p)})^k u^{pi} v^{pj}: \, i,j,k,n,r,m\in\N_0\right\} \cup \\
&\left\{x^{pn}\,y^{pr}\,(\gen^{-p}-1)^m\,(\zeta^{(p)})^k u^{pi} v^{pj}: \, i,j,k,n,r\in\N_0, \ m\in\N \right\}.
\end{align*}
The adjoint action of $\cO(\Bb)$ is trivial, hence $\phi(\cO(\Gb))$ is invariant. The PBW-basis for $\cO(\Bb)$ and $\cO(\Gb)$ implies
$\phi$ is faithfully flat. Then Remark \ref{remark-exact-sequence-hopf} applies and the row is exact.

The middle column is exact by Proposition \ref{prop-D-tilde-extension}. Only the leftmost column remains. Let $\pi \colon \cO(\Gb) \longrightarrow \nucleo$
be as in Remark \ref{rem:algebraic-group-normal-hopfsubalgebra}, then $\ker \pi = \left\langle x^p, u^p, \gen^p - 1\right\rangle = \cO(\Gb)  \operatorname{Fr}(\cO(\Gb))^+$. 
By Remark \ref{remark-exact-sequence-hopf}, the result follows.
 \end{proof}

 \subsection{Ring-theoretical properties of $\widetilde{D}$}
 
 \begin{prop}\label{prop:ringtheoretical}
 \begin{enumerate}[leftmargin=*,label=\rm{(\roman*)}] 
 \item\label{item:ringtheoretical1} The algebra $\widetilde{D}$ admits an exhaustive ascending filtration $(\widetilde{D}_n)_{n \in \N_0}$ such that 
 $\gr \widetilde{D} \simeq \ku[T^{\pm}]\ot \ku[X_1,\dots,X_5]$. 
 
 \smallbreak
 \item\label{item:ringtheoretical2} $\widetilde{D}$ is a noetherian domain.

\smallbreak
 \item\label{item:ringtheoretical3} $\widetilde{D}$ is a PI-algebra.
 \end{enumerate} 
 \end{prop}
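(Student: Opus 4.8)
The plan is to establish \ref{item:ringtheoretical1} by hand, then deduce \ref{item:ringtheoretical2} from it by a standard filtered-to-graded transfer, and obtain \ref{item:ringtheoretical3} from the module-finiteness over the centre recorded in Proposition \ref{prop-D-tilde-extension}.

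For \ref{item:ringtheoretical1} I would weight the PBW generators by
\[
\deg x = \deg u = \deg \zeta = 1, \qquad \deg y = \deg v = 2, \qquad \deg \gen = \deg \gen^{-1} = 0,
\]
and let $\widetilde{D}_n$ be the span of the PBW monomials $x^a y^b \gen^c \zeta^d u^e v^f$ of \eqref{eq:pbw-Dtilde} of total weight at most $n$. The point is that each defining relation in \eqref{Relaciones-tilde-H}, \eqref{eq:Relaciones-tilde-H-estrella} and \eqref{eq:relation-tilde-D} expresses an out-of-order product as the PBW-ordered product plus terms of \emph{strictly smaller} weight: for instance $\deg(yx)=3>2=\deg(x^2)$ in $yx = xy-\tfrac12 x^2$, while in $vx = xv+(1-\gen)+xu$ the symbol $vx$ has weight $3$ whereas $xu$ has weight $2$ and $1-\gen$ weight $0$, and likewise $\gen\zeta$ (weight $1$) and $yu$ (weight $3$) are dominated by $vy$ (weight $4$) in $vy = yv-\gen\zeta+yu$. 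Running through the remaining relations the same way shows $\widetilde{D}_m\widetilde{D}_n\subseteq \widetilde{D}_{m+n}$ and that every commutator of generators vanishes in $\gr\widetilde{D}$; hence $\gr\widetilde{D}$ is commutative. Since the filtration is defined through the PBW basis of Proposition \ref{prop:Dtilde}\ref{prop:Dtilde-uno}, the leading symbols of that basis form a basis of $\gr\widetilde{D}$, so there are no relations beyond commutativity and $\bar\gen\,\bar\gen^{-1}=1$; matching monomials then gives $\gr\widetilde{D}\simeq \ku[T^{\pm}]\ot\ku[X_1,\dots,X_5]$ with $T=\bar\gen$ and $X_1,\dots,X_5$ the symbols of $x,y,\zeta,u,v$.

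For \ref{item:ringtheoretical2} note the filtration is exhaustive with $\widetilde{D}_{-1}=0$ and that $\gr\widetilde{D}$, being a Laurent polynomial ring in finitely many variables, is a commutative noetherian domain. The standard transfer then applies: the leading-symbol map is multiplicative, so a product of two nonzero elements has nonzero leading symbol and $\widetilde{D}$ is a domain, while left/right noetherianity lifts from $\gr\widetilde{D}$ to $\widetilde{D}$. Equivalently, one may observe that $\widetilde{D}$ is an iterated Ore extension of $\ku[\gen^{\pm}]$ in the variables $x,y,\zeta,u,v$, from which the same conclusion follows.

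Finally, for \ref{item:ringtheoretical3}, Proposition \ref{prop-D-tilde-extension} shows that $Z$ is central in $\widetilde{D}$ and that $\widetilde{D}$ is a free $Z$-module of finite rank. A ring that is finitely generated as a module over a central subring is PI, so $\widetilde{D}$ is PI. I expect the only genuine work to be the weight bookkeeping in \ref{item:ringtheoretical1}---choosing weights that make every relation filtered and checking that the PBW basis prevents any collapse in $\gr\widetilde{D}$; the characteristic-$p$ hypothesis is essential precisely in \ref{item:ringtheoretical3}, since commutativity of $\gr\widetilde{D}$ alone would not yield PI, as the Weyl algebra in characteristic $0$ shows.
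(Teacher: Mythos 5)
Your proposal is correct and follows essentially the same route as the paper: a weight filtration under which every defining relation rewrites an out-of-order product as the ordered one plus terms of strictly smaller weight, identification of $\gr \widetilde{D}$ with $\ku[T^{\pm}]\ot\ku[X_1,\dots,X_5]$ via the PBW basis of Proposition \ref{prop:Dtilde}, the standard filtered-to-graded transfer for noetherianity and the domain property, and the PI property deduced from Proposition \ref{prop-D-tilde-extension} together with the fact that a ring which is module-finite over a central subring is PI (the paper cites \cite[Corollary 1.13]{McRob}). The only cosmetic difference is in the filtration's construction: the paper grades an auxiliary algebra by $\deg \gen^{-1}=-1$, $\deg\gen=\deg\zeta=1$, $\deg x=\deg u=2$, $\deg y=\deg v=3$ and pushes the filtration forward along the epimorphism onto $\widetilde{D}$, whereas you set $\deg\gen^{\pm 1}=0$ and filter by spans of PBW monomials; both choices make all relations filtered and lead to the same conclusion.
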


\begin{proof} \ref{item:ringtheoretical1}
Let $T $ be the algebra generated by $\gen^{\pm1}, \zeta, x, y, u, v$ with relations $\gen^{\pm1} \cdot \gen^{\mp 1} = 1$. 
Consider the grading on $T$ determined by
\begin{align*}
 \deg \gen^{-1} &= -1, &
\deg \gen &= \deg \zeta = 1, & \deg u &= \deg x = 2, & \deg v &= \deg y = 3.
\end{align*}
The filtration associated to this grading induces a filtration on $\widetilde{D}$
via the (evident) epimorphism $T\rightarrowdbl \widetilde{D}$. 
The relations of $\widetilde{D}$ imply that the classes of the generators commute in $\gr \widetilde{D}$ 
and $\gen \gen^{-1} = \gen^{-1} \gen = 1$. We may conclude that $\ku[T^{\pm}]\ot \ku[X_1,\dots,X_5]\rightarrowdbl \gr \widetilde{D}$. 
By dimension counting in each degree using the PBW-basis, this map is an isomorphism. 

\smallbreak
\ref{item:ringtheoretical2} follows from \ref{item:ringtheoretical1} since $\ku[T^{\pm}]\ot \ku[X_1,\dots,X_5]$ is a noetherian domain.
 
 \smallbreak
 \ref{item:ringtheoretical3} follows from Proposition \ref{prop-D-tilde-extension} and \cite[Corollary 1.13]{McRob}.
 \end{proof}

The algebra $\widetilde{H}$ is also PI by a similar reason. We observe that 
in characteristic 0, 
all finite-dimensional simple $\widetilde{H}$-modules have dimension 1 and are classified in \cite[\S 3]{abff} using results from \cite{iyudu}.

\section{Pre-Nichols algebras of the restricted Jordan plane}
\subsection{Finite-dimensional pre-Nichols algebras}
Recall that a pre-Nichols algebra of a braided vector space $(\cV,c)$ (or of its Nichols algebra)
is a graded connected braided Hopf algebra $\toba = \oplus_{n\in\N_0} \toba^n$ such that $\toba^1 \simeq \cV$ (as braided vector spaces)
and this generates $\toba$ as an algebra.
Accordingly a morphism of pre-Nichols algebras is one of a graded connected braided Hopf algebras inducing the identity on $\cV$.
Thus we have by definition  morphisms of pre-Nichols algebras $T(\cV)\twoheadrightarrow \toba \twoheadrightarrow \toba(\cV)$. 
If $\cV \in \ydk$ for some Hopf algebra $K$, then $\toba$ is a pre-Nichols algebra over $K$ if in addition $\toba \in \ydk$, that is
the kernel of  $T(\cV)\twoheadrightarrow \toba$ is a Yetter-Drinfeld submodule of $T(\cV)$

For classification issues it is important to determine all finite-dimensional pre-Nichols algebras of the restricted Jordan plane $\toba(V)$.
In this section we  classify those that factorize through the Jordan plane $\wtoba$.

\subsubsection{Pre-Nichols algebras} We shall need the
following lemma.

\begin{lemma}\label{lemma-action-and-comultiplication-Jordan}
The following formulas are valid in $\wtoba$ for every $n\in\N$:
\begin{align}
\label{g-action-y-n} g\rightharpoonup y^n &= y^n + n x y^{n-1} +\frac{1}{4} (n^2-n) x^2 y^{n-2},\\
\label{comultiplication-x-n}\Delta(x^n) &= x^n\ot 1 + 1 \ot x^n + \sum_{k=1}^{n-1} \binom{n}{k} x^{n-k}\ot x^k,\\
\label{comultiplication-y-n}\Delta(y^n) &= \sum_{k=0}^{n} \sum_{i=0}^{k} \binom{n}{k} \binom{k}{i}
(-1)^i\frac{[k-n]^{[i]}}{2^i} x^i y^{k-i} \ot y^{n-k}.
\end{align}
If $n=p\ell$ with $\ell\geq 1$, the last formula simplifies to
\begin{align}
\label{comultiplication-y-pl}\Delta(y^{p\ell}) = y^{p\ell} \ot 1 + 1 \ot y^{p\ell} + \sum_{t=1}^{\ell-1} \binom{\ell}{t} y^{p t}\ot y^{p(\ell-t)}.
\end{align}
\end{lemma}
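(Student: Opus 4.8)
The plan is to establish the four formulas by induction on $n$, using the defining relations of $\wtoba$ together with the braided Hopf algebra structure. The base cases are direct from \eqref{Eq:V_as_YD_module} and the fact that $x, y$ are primitive.

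For \eqref{g-action-y-n}, I would argue by induction using that the action is algebra-compatible in the sense $g \rightharpoonup (y \cdot y^{n-1}) = (g \rightharpoonup y)(g \rightharpoonup y^{n-1})$, since $g$ acts as an algebra automorphism. Substituting $g \rightharpoonup y = y + x$ and the inductive hypothesis for $g \rightharpoonup y^{n-1}$, then multiplying out and collecting terms via the relation $yx = xy - \tfrac{1}{2}x^2$ from \eqref{Relaciones-tilde-H}, should reproduce the stated coefficients $n$ and $\tfrac{1}{4}(n^2-n)$. The main bookkeeping here is reducing every monomial to the PBW form $x^i y^j$ and checking that the higher powers of $x$ (degree $\geq 3$) cancel; I expect this to be the first place where the specific quadratic relation is essential.

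For \eqref{comultiplication-x-n}, since $x$ is $(1, g)$-primitive the braided coproduct behaves like a skew-primitive element, and the formula is the standard $q$-binomial expansion specialized appropriately; one inducts using $\Delta(x^n) = \Delta(x)\Delta(x^{n-1})$ and the relation $gx = xg$ so that the group-like factors collapse, yielding ordinary binomial coefficients. Formula \eqref{comultiplication-y-n} is the genuinely laborious one: I would prove it by induction on $n$, writing $\Delta(y^n) = \Delta(y)\Delta(y^{n-1})$, then using \eqref{g-action-y-n} together with the commutation relations to move all the $x$'s and $g$'s past the $y$'s into the PBW order $x^i y^{k-i} \otimes y^{n-k}$. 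The raising-factorial coefficients $[k-n]^{[i]}$ should emerge from combining the binomial coefficients of the inductive step with the coefficients produced by applying $g$-actions; verifying the Vandermonde-type identity that consolidates these into the stated closed form is the crux of the computation.

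The hard part will be \eqref{comultiplication-y-n}, both in organizing the double sum and in the final identity showing that the accumulated coefficients collapse to $\binom{n}{k}\binom{k}{i}(-1)^i [k-n]^{[i]}/2^i$. Once \eqref{comultiplication-y-n} is in hand, \eqref{comultiplication-y-pl} follows by the specialization $n = p\ell$: here the raising factorial $[k - p\ell]^{[i]} = \prod_{s=1}^{i}(k - p\ell + s - 1)$ reduces modulo $p$ to $[k]^{[i]}$, and crucially $\binom{p\ell}{k} \equiv 0 \pmod p$ unless $p \mid k$ by Lucas' theorem, so only the terms with $k = pt$ survive; for those the inner sum over $i$ must collapse to the single term $i = 0$, which I would check by showing the surviving raising factorial $[pt - p\ell]^{[i]}$ vanishes mod $p$ for $1 \leq i \leq pt$ except when it multiplies $\binom{pt}{i}$ with $i = 0$. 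This mod-$p$ simplification, combined with the transition $\binom{p\ell}{pt} \equiv \binom{\ell}{t} \pmod p$ (again Lucas), is what produces the clean formula \eqref{comultiplication-y-pl}.
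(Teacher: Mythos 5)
Your proposal is correct, and for \eqref{g-action-y-n}, \eqref{comultiplication-x-n} and \eqref{comultiplication-y-n} it follows essentially the same route as the paper: induction on $n$ via multiplicativity of the coproduct, the algebra-automorphism property of the $g$-action, and the reordering relation $yx = xy - \tfrac{1}{2}x^2$; the paper is equally terse about the coefficient bookkeeping in \eqref{comultiplication-y-n}, so there is no gap there. (One framing point: in the braided Hopf algebra $\wtoba$ the element $x$ is genuinely primitive, and the role of $g$ is played by the braiding, which is trivial on powers of $x$ since $g \rightharpoonup x = x$; your ``$(1,g)$-skew-primitive with $gx = xg$'' picture is the bosonized version of the same argument.) Where you genuinely diverge is \eqref{comultiplication-y-pl}. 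You specialize \eqref{comultiplication-y-n} at $n = p\ell$ and kill terms mod $p$: Lucas' theorem leaves only $k = pt$, with coefficient $\binom{p\ell}{pt} \equiv \binom{\ell}{t}$, and for $i \geq 1$ the raising factorial $[pt - p\ell]^{[i]}$ vanishes in $\ku$ because its first factor is $p(t-\ell) \equiv 0$; this is a complete and correct argument. The paper instead argues structurally: from \eqref{comultiplication-y-n} with $n = p$ (where $\binom{p}{k} = 0$ for $0 < k < p$ and $[0]^{[i]} = 0$ for $i \geq 1$) the element $y^p$ is primitive, and by \eqref{g-action-y-n} it has trivial self-braiding $c(y^p \ot y^p) = y^p \ot y^p$, so the ordinary binomial theorem applies to $(y^p)^\ell$. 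The paper's route is shorter, needs \eqref{comultiplication-y-n} only at $n = p$ rather than at every multiple of $p$, avoids Lucas' theorem entirely, and explains why the answer is exactly a binomial formula; yours is more mechanical but self-contained once the general formula is proved. Both are sound.
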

\begin{proof}
\eqref{g-action-y-n} is proved by induction on $n$ using \eqref{jordan-relations-between-monomials}. 
Since $x$ is primitive and $c(x\ot x) = x\ot x$ \eqref{comultiplication-x-n} follows.
\eqref{comultiplication-y-n} is proved  by induction using \eqref{g-action-y-n} and that $g^k\rightharpoonup y = y+kx$ for every $k\in\N$.
To prove \eqref{comultiplication-y-pl}, notice that $y^p$ is primitive because $\binom{p}{k} = 0$ for $1\leq k \leq p-1$ and \eqref{comultiplication-y-n}. By \eqref{g-action-y-n} $c(y^p\ot y^p) = y^p \ot y^p$, so \eqref{comultiplication-y-pl} follows.
\end{proof}

\begin{prop}
Let $k,\ell\in\N$ and $a\in\ku$. The algebras
\begin{align*}
\mathcal{K}(k,a) &\coloneqq \wtoba/(y^{p^k}-a x^{p^k}), \\
\mathcal{F}(\ell) &\coloneqq \wtoba/(x^{p^\ell}), \\
\mathcal{G}(k,\ell,a)&\coloneqq \wtoba/(y^{p^k}-a x^{p^k}, x^{p^\ell})
\end{align*}
are pre-Nichols algebras of $\toba(V)$ over $\ku\Gamma$ that factorize by $\wtoba$ and
have PBW-bases given by the following table.
\begin{align*}
\begin{tabular}{c|c}
Algebra & PBW-basis\\
\hline
$\mathcal{K}(k,a)$ & $\{x^i y^j \colon i\in\N_0, j\in\I_{0, p^k-1}\}$ \\
\hline
$\mathcal{F}(\ell)$ & $\{x^i y^j \colon i\in\I_{0, p^\ell-1}, j\in\N_0\}$ \\
\hline
$\mathcal{G}(k,\ell,a)$ & $\{x^i y^j \colon i\in\I_{0, p^\ell-1}, j\in\I_{0, p^k-1}\}$
\end{tabular}
\end{align*}
\end{prop}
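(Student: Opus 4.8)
The plan is to realize each of the three algebras as a quotient of the Jordan plane $\wtoba$---which is itself a pre-Nichols algebra of $\toba(V)$ over $\ku\Gamma$ because $V\in\ydG$ and the relation $yx-xy+\tfrac12x^2$ spans a Yetter--Drinfeld submodule of $T(V)^2$---by a \emph{homogeneous} Hopf ideal of $\ydG$ generated by primitive elements, and then to read off the PBW-bases from the centrality of $x^{p^k}$ and $y^{p^k}$ in $\wtoba$. Write $z_1=y^{p^k}-ax^{p^k}$ and $z_2=x^{p^\ell}$. Both are homogeneous (of degrees $p^k$ and $p^\ell$), so the ideals they generate are graded; hence the quotients stay connected graded. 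Since $k,\ell\geq 1$ the relations live in degrees $\geq p\geq 3$, so the degree-one component remains $V$ and each quotient is generated in degree one. Finally, as $x^p=y^p=0$ in $\toba(V)$ by \cite{clw}, we have $z_1=z_2=0$ in $\toba(V)$, so each quotient map $\wtoba\twoheadrightarrow\toba(V)$ factors through $\mathcal{K}(k,a)$, $\mathcal{F}(\ell)$ and $\mathcal{G}(k,\ell,a)$, which is the asserted factorization.

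First I would check that $z_1,z_2$ generate Hopf ideals in $\ydG$. By \eqref{comultiplication-x-n} and $\binom{p^\ell}{t}\equiv 0\pmod p$ for $0<t<p^\ell$, the element $x^{p^\ell}$ is primitive; taking $\ell=p^{k-1}$ in \eqref{comultiplication-y-pl} together with $\binom{p^{k-1}}{t}\equiv 0$ shows $y^{p^k}$ is primitive, hence so is $z_1$. Moreover $g\rightharpoonup x^{p^\ell}=(g\rightharpoonup x)^{p^\ell}=x^{p^\ell}$, while $g\rightharpoonup y^{p^k}=y^{p^k}$ since $g$ acts by algebra maps and fixes $y^{p}$ by \eqref{g-action-y-n}; and $\delta(x^{p^\ell})=g^{p^\ell}\ot x^{p^\ell}=1\ot x^{p^\ell}$, $\delta(y^{p^k})=g^{p^k}\ot y^{p^k}=1\ot y^{p^k}$ because $g^{p}=1$ in $\Gamma$. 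Thus $\ku z_1,\ku z_2$ are trivial Yetter--Drinfeld submodules of primitives, so the ideals they generate are Hopf ideals of $\ydG$ and the quotients are pre-Nichols algebras of $\toba(V)$ over $\ku\Gamma$.

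The key computation, which I expect to be the main point, is the centrality of $x^{p^k}$ and $y^{p^k}$ in $\wtoba$: everything afterwards is bookkeeping with the PBW-basis $(x^iy^j)_{i,j\in\N_0}$. From \eqref{jordan-relations-between-monomials} one gets $y\,x^{n}=x^{n}y-\tfrac{n}{2}x^{n+1}$, so for $n=p^k$ the correction term vanishes and $x^{p^k}$ commutes with $y$ (and trivially with $x$). Dually, \eqref{jordan-relations-between-monomials} gives $y^{p^k}x=xy^{p^k}+(-1)^{p^k}\tfrac{[1]^{[p^k]}}{2^{p^k}}x^{p^k+1}$, where all intermediate binomials $\binom{p^k}{m}$, $0<m<p^k$, vanish; since $[1]^{[p^k]}=(p^k)!\equiv 0\pmod p$ the surviving term dies as well, so $y^{p^k}$ is central. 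Getting these precise characteristic-$p$ cancellations is the crux.

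With centrality in hand, the bases follow from freeness over central polynomial subalgebras. Writing $i=p^\ell q+i'$ with $0\le i'<p^\ell$, the identity $x^iy^j=(x^{p^\ell})^{q}x^{i'}y^{j}$ exhibits $\wtoba$ as free over $\ku[x^{p^\ell}]$ on $\{x^{i'}y^{j}: 0\le i'<p^\ell,\ j\in\N_0\}$, and dividing by $x^{p^\ell}$ yields the stated basis of $\mathcal{F}(\ell)$. Next, writing $i=p^kq_1+i'$, $j=p^kq_2+j'$ and moving the central factors to the left shows $\wtoba$ is free over $C=\ku[x^{p^k},y^{p^k}]$ on $\{x^{i'}y^{j'}:0\le i',j'<p^k\}$; since $C/(y^{p^k}-ax^{p^k})\simeq\ku[x^{p^k}]$, base change gives that $\mathcal{K}(k,a)$ is free over $\ku[x^{p^k}]$ on the same set, i.e. has basis $\{x^iy^{j}: i\in\N_0,\ 0\le j<p^k\}$. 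Finally $x^{p^\ell}$ stays central in $\mathcal{K}(k,a)$, which is free over $\ku[x^{p^\ell}]$ on $\{x^{i'}y^{j}:0\le i'<p^\ell,\ 0\le j<p^k\}$, so $\mathcal{G}(k,\ell,a)=\mathcal{K}(k,a)/(x^{p^\ell})$ has the asserted finite basis. (Alternatively, $\mathcal{K}(k,a)$ may be handled via the exact sequence $0\to\wtoba\xrightarrow{\,\cdot z_1\,}\wtoba\to\mathcal{K}(k,a)\to 0$, where injectivity of multiplication by $z_1$ is read off from the top $y$-degree term and a Hilbert-series count then pins down the basis.)
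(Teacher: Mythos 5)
Your proof is correct, but it reaches the PBW-bases by a genuinely different route than the paper. The first half coincides in substance with the paper's (terser) argument: the paper simply notes that every $f=b_1x^{p^t}+b_2y^{p^t}$ is an invariant primitive with $c(f\ot f)=f\ot f$, whereas you spell out primitivity via \eqref{comultiplication-x-n} and \eqref{comultiplication-y-pl}, $g$-invariance via \eqref{g-action-y-n}, and triviality of the $\ku\Gamma$-coaction using $g^p=1$; both give that the defining ideals are braided Hopf ideals in $\ydG$ and that the quotients are pre-Nichols algebras factoring through $\wtoba$. For the bases, however, the paper runs the diamond lemma: it introduces a monomial order (by length, number of $y$'s, then inversions) and resolves the overlap ambiguities $y^{p^k-1}yy^{p^k-1}$, $y^{p^k-1}yx$, $x^{p^\ell-1}xx^{p^\ell-1}$, $yxx^{p^\ell-1}$ using \eqref{jordan-relations-between-monomials}. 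You instead prove centrality of $x^{p^k}$ and $y^{p^k}$ in $\wtoba$ and deduce the bases from freeness of $\wtoba$ over the central polynomial subalgebras $\ku[x^{p^\ell}]$ and $\ku[x^{p^k},y^{p^k}]$ together with base change; note that your centrality computation for $y^{p^k}$ (vanishing of $\binom{p^k}{m}$ for $0<m<p^k$ and of $[1]^{[p^k]}=(p^k)!$ modulo $p$) is exactly the computation the paper performs when resolving the ambiguity $y^{p^k-1}yx$, so the two arguments rest on the same characteristic-$p$ cancellations. What your packaging buys is that it avoids setting up the rewriting machinery and yields, as a by-product, structurally useful freeness statements over central subalgebras (in the spirit of Proposition \ref{prop-D-tilde-extension} \ref{Z-2}); what the paper's diamond-lemma argument buys is that it is mechanical and would still apply verbatim if the added relations failed to be central.
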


\begin{proof}
The elements of the form $f=b_1 x^{p^t} + b_2 y^{p^t}$ with $b_1,b_2\in\ku$ and $t\in\N$ are
primitive, $g\rightharpoonup f = f$ and $c(f\ot f)= f\ot f$. Thus the algebras $\mathcal{K}(k,a)$, $\mathcal{F}(\ell)$ and $\mathcal{G}(k,\ell,a)$ are indeed pre-Nichols algebras. We prove the existence
of the PBW-bases using the diamond lemma. We declare $x<y$ and we define a ordering in the monomials with letter $x$, $y$ in the following way:
\begin{itemize}[leftmargin=*]
\item $X<Y$ if the length of $X$ (the number of letters in the product $X$) is less than the length
of $Y$.
\item If $X$ and $Y$ have the same length, but the number of $y$'s in $Y$ is greater than the
number of $y$'s in $X$, then $X<Y$.
\item If $X$ is a permutation of the letters of $Y$, but has a lower number of inverses, then
$X<Y$.
\end{itemize}
Here, we say that the monomial $X=x_1\cdots x_s$ with $x_t\in\{x,y\}$, $t\in\I_{1, s}$ has inverse
$(i,j)$, $1\leq i<j\leq s$ if $x_i>x_j$. With this order, if all ambiguities can be solved, then the hypotheses 
of the diamond lemma are fulfilled. Hence the sets of irreducible monomials are basis of each algebra respectively,
 that is the proposed PBW-bases. Using \eqref{jordan-relations-between-monomials} the ambiguity resolutions are:
\begin{itemize}[leftmargin=*]
\item $y^{p^k-1} y y^{p^k-1}$ for $\mathcal{K}(k,a)$ and $\mathcal{G}(k,\ell,a)$:
\begin{align*}
(y^{p^k-1} y) y^{p^k-1} &= a x^{p^k} y^{p^k-1} = a y^{p^k-1} x^{p^k} = y^{p^k-1} (y y^{p^k-1}).
\end{align*}
\item $y^{p^k-1} y x$ for $\mathcal{K}(k,a)$ and $\mathcal{G}(k,\ell,a)$:
\begin{align*}
(y^{p^k-1} y) x &= y^{p^k} x = a x^{p^k+1}.\\
y^{p^k-1}(y x) &= y^{p^k-1}xy - \frac{1}{2} y^{p^k-1}x^2 =\\ &=\sum_{t=0}^{p^k-1}\binom{p^k-1}{t}\frac{(-1)^t}{2^t}x^{1+t} y^{p^k-t-1}([1]^{[t]}y-\frac{1}{2}[2]^{[t]} x)\\
&= x y^{p^k} - \frac{(p^k)!}{2^{p^k}} x^{p^k+1} + \sum_{t=1}^{p^k-1}\binom{p^k}{t}(-1)^t \frac{[1]^{[t]}}{2^t} x^{1+t} y^{p^k-t} \\
&= x y^{p^k} = a x^{p^k+1}.
\end{align*}
\item $x^{p^\ell-1} x x^{p^\ell-1}$ for $\mathcal{F}(\ell)$ and $\mathcal{G}(k,\ell,a)$:
\begin{align*}
(x^{p^\ell-1} x) x^{p^\ell-1} = 0 = x^{p^\ell-1} (x x^{p^\ell-1}).
\end{align*}

\item $y x x^{p^\ell-1}$ for $\mathcal{F}(\ell)$ and $\mathcal{G}(k,\ell,a)$:
\begin{align*}
(yx)x^{p^\ell -1} = xyx^{p^\ell-1} - \frac{1}{2} x^{p^\ell + 1} = x^{p^\ell} y + \frac{1}{2}x^{p^\ell} =0 = y (x x^{p^\ell-1}).
\end{align*}
\end{itemize}
\end{proof}

\subsubsection{Exhaustion} 
We show now that the  algebras $\mathcal{G}(k,\ell,a)$ are the only finite-dimensional
pre-Nichols algebras of $\toba(V)$ that factorize through $\wtoba$. The relations of a
pre-Nichols algebra with minimal degree are primitive, thus we start by computing the primitive elements of
the algebras $\wtoba$, $\mathcal{K}(k,a)$, $\mathcal{F}(\ell)$ or $\mathcal{G}(k,\ell,a)$.
Let us denote by $\cA$ one of these algebras and by $\Pc(\cA)^n$  
the space of primitives in degree $n$ which is a Yetter-Drinfeld submodule over $\Gamma$ of $\cA^n$. 
Since $\Gamma$ acts unipotently if $\Pc(\cA)^n \neq 0$,  there exists $0 \neq f\in\Pc(\cA)^n$ invariant. 
We then first compute the invariant elements of $\cA^n$.

\begin{lemma}\label{invariants-of-A}
Let $n\in\N$. 
\begin{enumerate}[leftmargin=*,label=\rm{(\roman*)}]
	\item\label{lemma:invariants-of-A-1} If $\cA$ is either $\wtoba$, $\mathcal{K}(k,a)$ or $\mathcal{G}(k,\ell,a)$ with $k\leq\ell$, then the
	submodule $\mathcal{I}^n\subseteq \cA^n$ of invariant elements of degree $n$ is generated by the set
	\begin{align*}
	\left\{x^{n-p\ell}y^{p\ell}\colon 0\leq \ell \leq \lfloor{\frac{n}{p}}\rfloor \right\}.
	\end{align*}
	\item\label{lemma:invariants-of-A-2} If $\cA$ is $\mathcal{F}(\ell)$ or $\mathcal{G}(k,\ell,a)$ with $k>\ell$, then the
	submodule $\mathcal{I}^n\subseteq \cA^n$ of invariant elements of degree $n$ is generated by the set
	\begin{align*}
	\left\{x^{p^{\ell}-1} y^{n+1-p^{\ell}}\right\}\cup\left\{x^{n-p\ell}y^{p\ell}\colon 0\leq \ell \leq \lfloor{\frac{n}{p}}\rfloor \right\}, & &\text{If }n\geq p^{\ell}-1,\\
	\left\{x^{n-p\ell}y^{p\ell}\colon 0\leq \ell \leq \lfloor{\frac{n}{p}}\rfloor \right\}, & & \text{If }n< p^{\ell}-1.
	\end{align*}
\end{enumerate}
\end{lemma}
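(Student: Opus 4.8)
The plan is to realize $\mathcal{I}^n$ as the kernel of the operator $\partial := (g\rightharpoonup -) - \id$ acting on $\cA^n$. Since $\car \ku = p$ and $g^p = 1$, the generator $g$ acts unipotently, so $f \in \cA^n$ is $\Gamma$-invariant if and only if $\partial(f) = 0$. Writing $e_j := x^{n-j}y^j$, the family $(e_j)$ — with $j$ ranging over the interval dictated by the PBW-basis of $\cA$ — is a basis of $\cA^n$, and since $g\rightharpoonup x = x$ while $g$ is an algebra map, \eqref{g-action-y-n} yields
\begin{align*}
\partial(e_j) = j\, e_{j-1} + \tfrac14 j(j-1)\, e_{j-2},
\end{align*}
with the convention $e_i = 0$ whenever $x^{n-i}$ vanishes in $\cA$ (that is, $n-i \ge p^\ell$ in the cases $\mathcal{F}(\ell)$, $\mathcal{G}(k,\ell,a)$). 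Thus $\partial$ is strictly lower triangular for the order of decreasing $j$, and this triangularity is what drives the entire computation.

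First I would exhibit the claimed generators as invariants. Since $p \mid pm$ kills both coefficients $pm$ and $\tfrac14 pm(pm-1)$ in $\ku$, each $e_{pm} = x^{n-pm}(y^p)^m$ satisfies $\partial(e_{pm}) = 0$; these are the elements of \ref{lemma:invariants-of-A-1}. When a power of $x$ is truncated there is one further invariant coming from the bottom of the range: if $n \ge p^\ell - 1$ then the lowest surviving basis vector is $x^{p^\ell-1}y^{\,n+1-p^\ell} = e_{\,n-p^\ell+1}$, and $\partial$ annihilates it because both $e_{\,n-p^\ell}$ and $e_{\,n-p^\ell-1}$ already carry the factor $x^{p^\ell} = 0$. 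This is the extra generator in \ref{lemma:invariants-of-A-2}. Hence every listed element lies in $\mathcal{I}^n$, and only the spanning remains.

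For spanning I would count dimensions. In the column $e_j$ the leading term of $\partial(e_j)$ is $j\, e_{j-1}$, nonzero exactly when $p \nmid j$ and $e_{j-1} \neq 0$; these columns have pairwise distinct leading terms, so $\rk \partial$ equals the number of such indices $j$, whence $\dim \mathcal{I}^n = \dim \cA^n - \rk\partial$ is precisely the number of degeneracies: the indices $j$ in range divisible by $p$, plus the single boundary index forced by $x^{p^\ell} = 0$. Comparing this number with the count of linearly independent nonzero monomials in the proposed generating set then forces equality and finishes the proof. The main obstacle is the bookkeeping in the quotients: one must track how $y^{p^k} = a x^{p^k}$ simultaneously cuts the range of $j$ at $p^k - 1$ and, using that $x^{p^k}$ is central, collapses each high invariant $e_{pm}$ with $pm \ge p^k$ onto a scalar multiple of a lower one, and how this meshes with the vanishing imposed by $x^{p^\ell}=0$. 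Disentangling these reductions is exactly what separates the regimes $k \le \ell$ and $k > \ell$ and determines when the boundary invariant $x^{p^\ell-1}y^{\,n+1-p^\ell}$ contributes a genuinely new basis vector of $\mathcal{I}^n$.
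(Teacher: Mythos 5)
Your reformulation is sound as far as it goes, and it is essentially the paper's own computation in cleaner clothing: the paper writes an invariant as $f=\sum_i b_i x^i y^{n-i}$, applies \eqref{g-action-y-n}, and solves the resulting two-term recursion among the coefficients, which is exactly your lower-triangular operator $\partial(e_j)=j\,e_{j-1}+\tfrac{1}{4}j(j-1)\,e_{j-2}$ analyzed column by column. Your Step 1 (the listed monomials lie in $\ker\partial$) and your rank formula $\rk\partial=\#\{j:\ p\nmid j,\ e_{j-1}\neq 0\}$ are both correct.

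The gap is that the decisive step --- ``comparing this number with the count \dots forces equality'' --- is never carried out, and in fact it cannot be carried out for part \ref{lemma:invariants-of-A-1}: the two numbers do not agree there. Your own Step 1 produces the boundary invariant $e_{n-p^\ell+1}=x^{p^\ell-1}y^{n+1-p^\ell}$ using only the relation $x^{p^\ell}=0$ and $n\ge p^\ell-1$; but this relation holds not only in $\mathcal{F}(\ell)$ and $\mathcal{G}(k,\ell,a)$ with $k>\ell$, it holds in every $\mathcal{G}(k,\ell,a)$ with $k\le\ell$, which is a part \ref{lemma:invariants-of-A-1} algebra. Concretely, take $\cA=\mathcal{G}(1,1,0)=\toba(V)$ and $n=p$: then $x^{p-1}y\neq 0$ is invariant, since $g\rightharpoonup x^{p-1}y=x^{p-1}(y+x)=x^{p-1}y$, whereas every element of the claimed generating set $\{x^{p-pm}y^{pm}:0\le m\le 1\}=\{x^p,y^p\}$ is zero; moreover $x^{p-1}y$ can never be reached from the listed monomials, because the only available reduction $y^{p^k}=a\,x^{p^k}$ preserves the $y$-degree modulo $p$, so the span of the listed set consists of combinations of basis monomials whose $y$-degree is divisible by $p$. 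More generally, for $k\le\ell$, $p^\ell\le n\le p^\ell+p^k-2$ and $p\nmid(n-p^\ell+1)$, your own count gives $\dim\mathcal{I}^n=\#\{j\ \text{in range}:\ p\mid j\}+1$, strictly larger than the dimension of the span of the set in \ref{lemma:invariants-of-A-1}. So your method, carried out honestly, disproves part \ref{lemma:invariants-of-A-1} as printed rather than proving it: the dichotomy governing the extra generator is ``$x$ is truncated'' versus ``$x$ is not truncated'', not $k\le\ell$ versus $k>\ell$. (This is in fact a defect of the statement itself; the paper's proof treats only $\cA=\wtoba$ in detail and declares the quotients ``similar'', which hides precisely this case, and the later use of the lemma for primitives survives because the boundary invariant is not primitive.) As written, your proposal both asserts the false equality and defers to ``bookkeeping'' the very case where it fails, so it cannot be completed into a proof of the statement as given.
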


\begin{proof}
\ref{lemma:invariants-of-A-1} We just consider the case $\cA = \wtoba$, as all other algebras have similar PBW-bases. 
By \eqref{g-action-y-n},  $x^{n-p\ell}y^{p\ell} \in \mathcal{I}^n$ for all $\ell$ as above.

\smallbreak
Let $f = \sum_{i=0}^{n} b_i x^i y^{n-i} \in \mathcal{I}^n$. By \eqref{g-action-y-n}  we get
\begin{align*}
0 &= g \rightharpoonup f - f = \\
b_0 n x y^{n-1} &+ \sum_{i=1}^{n-1}\left[b_i(n-i) + \frac{b_{i-1}}{4}((n-i+1)^2 - (n-i+1))\right] x^{i+1} y^{n-i-1}.
\end{align*}
And thus $b_0 = 0$ if $p\nmid n$ and
\begin{align}\label{eq:bi-invariant}
0 &= b_i(n-i) + \frac{b_{i-1}}{4} \left((n-i+1)^2 - (n-i+1) \right) & & \forall i\in\I_{n-1}.
\end{align}
Set $r_i := n-i$ in $\fp$ for $i\in\I_{n-1}$. If $r_i =0$, then \eqref{eq:bi-invariant} gives no restriction.
If $r_i \neq 0$, then \eqref{eq:bi-invariant} says that
$
b_i =- \frac{1}{4} b_{i-1} (r_i+1)$. 
Therefore
\begin{itemize}[leftmargin=*]
\item If $r_i = -1$, then $b_i = 0$.
\item If $r_i \neq -1,0$ and $b_{i-1} = 0$,  then $b_{i} = 0$.
\end{itemize}

Assume that $p\nmid n$. Then $b_0 =0$. Then $r_1 \neq -1$; thus either $r_1 \neq 0$  hence $b_1 =0$, or else 
$r_1 = 0$ in which case $r_2 = -1$ and $b_2 =0$. 
Arguing recursively $b_i =0$ for any $i$  until $r_i = 0$.
i.e. $n- i =p\ell$ for some $\ell$. For this $i$ we have   $r_{i+1} = -1$, hence $b_{i+1} = 0$ and so on.
That is, $f$ is sum of monomials as desired.
Finally, if $p \mid n$, $r_{1} = -1$, hence $b_{1} = 0$ and we continue analogously.

\medbreak
\ref{lemma:invariants-of-A-2} If $n< p^{\ell}-1$, the same recursive argument as in \ref{lemma:invariants-of-A-1} applies.
If $n\geq p^{\ell}-1$, by \eqref{g-action-y-n} the element $x^{p^{\ell}-1} y^{n+1-p^{\ell}}$ is invariant. Then taking 
$f\in \mathcal{I}^n$ as a linear combination of the remaining PBW-basis elements leads to a similar recursive argument as in
 \ref{lemma:invariants-of-A-1}.
\end{proof}

\begin{prop}\label{prop:prop:calculation-of-primitives-in-pre-nichols}
Let $\cA$ be any of the algebras $\wtoba$, $\mathcal{K}(k,a)$, $\mathcal{F}(\ell)$ or $\mathcal{G}(k,\ell,a)$.
Let $n\in\N$. Then
\begin{align*}
\Pc(\cA)^n = \begin{cases}
0 & \text{if } n \text{ is not a power of } p.\\
\ku x^{p^t} + \ku y^{p^t} & \text{if } n=p^t \text{ for some } t\in\N_0.
\end{cases}
\end{align*}
\end{prop}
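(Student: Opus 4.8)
The plan is to bound $\Pc(\cA)^n$ from below by exhibiting primitives and from above via the coproduct. For the lower bound, $x^{p^t}$ is primitive by \eqref{comultiplication-x-n} since $p\mid\binom{p^t}{k}$ for $0<k<p^t$, while $y^{p^t}$ is primitive by \eqref{comultiplication-y-pl} applied with $\ell=p^{t-1}$, since $p\mid\binom{p^{t-1}}{s}$ for $0<s<p^{t-1}$; these images survive in each quotient $\mathcal{K}(k,a)$, $\mathcal{F}(\ell)$, $\mathcal{G}(k,\ell,a)$. The case $n=1$ is immediate, as $\cA^1=V=\ku x+\ku y$ and every degree-one element is primitive.

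The heart is the upper bound. I would take a primitive $f=\sum_{i+j=n}c_{ij}x^iy^j$ with $n\ge 2$, so that every intermediate-bidegree component of $\Delta(f)$ vanishes, and extract information from the $(1,n-1)$-component alone. Using that $\Delta$ is multiplicative for the braided multiplication, together with \eqref{comultiplication-x-n}, \eqref{comultiplication-y-n} and $g^i\rightharpoonup y=y+ix$, a direct computation gives the degree-one-on-the-left part of $\Delta(x^iy^j)$ as
\[
i\,x\ot x^{i-1}y^j+j\,y\ot x^iy^{j-1}+\big(ij+\tfrac{j(j-1)}{2}\big)x\ot x^iy^{j-1}.
\]
Summing against $c_{ij}$ and using that the monomials $x^ay^b$ occurring remain part of the PBW basis of $\cA^{n-1}$ in each of the four algebras, vanishing of the $\ku y\ot\cA^{n-1}$-slot forces $c_{ij}\,j=0$, i.e. $c_{ij}=0$ unless $p\mid j$; note this already shows, via \eqref{g-action-y-n} (whose $x$- and $x^2$-corrections carry the factors $j$ and $j(j-1)$), that $f$ is $\Gamma$-invariant, reconciling with Lemma~\ref{invariants-of-A}. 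Feeding $p\mid j$ back into the $\ku x\ot\cA^{n-1}$-slot, the neighbouring coefficient $c_{i-1,j+1}$ drops out (as $p\nmid j+1$) and the recurrence collapses to $i\,c_{ij}=0$, so $c_{ij}=0$ unless $p\mid i$ as well.

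Hence any primitive $f$ of degree $n\ge2$ has $c_{ij}=0$ unless $p\mid i$ and $p\mid j$. If $p\nmid n$ this is impossible for $i+j=n$, so $\Pc(\cA)^n=0$. If $p\mid n$, then $f$ lies in the subalgebra $\cB=\langle x^p,y^p\rangle$, which by \eqref{jordan-relations-between-monomials} is commutative and, as $x^p,y^p$ are primitive, is a Hopf subalgebra that is a quotient of the polynomial Hopf algebra $\ku[X,Y]$ with $X,Y$ primitive. I would finish with the classical facts $\Pc(\ku[X])=\bigoplus_{s\ge0}\ku X^{p^s}$ and $\Pc(\ku[X]\ot\ku[Y])=\Pc(\ku[X])\oplus\Pc(\ku[Y])$: passing to the relevant quotient, the primitives of $\cB$ in degree $n$ are spanned by $x^{p^t}$ and $y^{p^t}$ when $n=p^t$ and vanish otherwise. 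Combining the two regimes gives the stated dichotomy.

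The step I expect to be most delicate is the explicit computation of the $(1,n-1)$-component and, crucially, verifying that the monomials appearing stay a linearly independent part of the PBW basis in each of $\wtoba$, $\mathcal{K}(k,a)$, $\mathcal{F}(\ell)$, $\mathcal{G}(k,\ell,a)$, where the truncations differ. A related subtlety is the final step: in the truncated algebras $x^{p^t}$ or $y^{p^t}$ may vanish or become proportional (e.g. $y^{p^k}=ax^{p^k}$ in $\mathcal{K}(k,a)$), so that $\cB$ is a genuine quotient of $\ku[X,Y]$ and the answer $\ku x^{p^t}+\ku y^{p^t}$ must be read as a linear span that may be one-dimensional; I would confirm that this matches the intended statement.
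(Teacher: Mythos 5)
Your proposal is correct, but it follows a genuinely different route from the paper's. The paper exploits the Yetter--Drinfeld structure: it first computes the $g$-invariants of $\cA^n$ (Lemma \ref{invariants-of-A}), uses unipotency of the $\Gamma$-action to produce an \emph{invariant} primitive whenever $\Pc(\cA)^n\neq 0$, then shows that primitivity kills all intermediate monomials $x^{n-p\ell}y^{p\ell}$, leaving $ax^n+by^n$, which is primitive only for $n=p^t$ by Lucas' theorem; finally it rules out non-invariant primitives by a Jordan-canonical-form argument. You instead work purely at the coalgebra level: your displayed formula for the $(1,n-1)$-component of $\Delta(x^iy^j)$ is correct (it is consistent with \eqref{comultiplication-x-n} and \eqref{comultiplication-y-n}, and with the braided multiplicativity of $\Delta$), the vanishing of the $y\ot(\cdot)$ and $x\ot(\cdot)$ slots does force $p\mid j$ and then $p\mid i$, and the needed monomials $x^{i-1}y^j$, $x^iy^{j-1}$ do stay in the PBW bases since those bases are downward closed in both exponents. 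This recovers $\Gamma$-invariance for free (no Jordan-form step), and places any primitive of degree $\geq 2$ in the subalgebra $\cB=\langle x^p,y^p\rangle$, on which the braiding is the flip (by \eqref{g-action-y-n} and \eqref{jordan-relations-between-monomials}, $g$ fixes $x^p$ and $y^p$, and they commute), so the endgame is classical commutative Hopf algebra theory in characteristic $p$. What your route buys is independence from Lemma \ref{invariants-of-A} and from the Jordan-form argument; what the paper's route buys is that it reuses the invariant computation it has already done and avoids your explicit bidegree formula.

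One point you should tighten is the phrase ``passing to the relevant quotient'': primitives do not automatically behave well under Hopf algebra quotients (a quotient can have strictly more primitives than the image of $\Pc$). Here the fix is concrete: in each case $\cB$ is, after the change of primitive variables $Y'=Y-bX$ with $b^{p^{k-1}}=a$ (available since $\ku$ is algebraically closed), a tensor product of algebras $\ku[T]$ or $\ku[T]/(T^{p^r})$ on primitive generators; then $\Pc\bigl(\ku[T]/(T^{p^r})\bigr)=\ku T+\ku T^{p}+\dots+\ku T^{p^{r-1}}$ together with $\Pc(A\ot B)=\Pc(A)\oplus\Pc(B)$ for connected graded Hopf algebras yields exactly the stated answer, with the span $\ku x^{p^t}+\ku y^{p^t}$ degenerating precisely in the situations you flagged. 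With that step spelled out, your argument is complete.
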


\begin{proof}
	
We proceed in steps. First we show that if $h\in\Pc(A)^n\cap \cI^n$, then $h$ is in the linear span of $\left\{x^{n-p\ell}y^{p\ell}\colon 0\leq \ell \leq \lfloor{\frac{n}{p}}\rfloor \right\}$ regardless of $\cA$. If $\cA$ is $\mathcal{F}(\ell)$ or $\mathcal{G}(k,\ell,a)$ with $k>\ell$, and
$n\geq p^{\ell}-1$, then $h$ is of the form
\begin{align*}
h = bx^{p^{\ell}-1} y^{n+1-p^{\ell}} + \sum_{\ell=0}^{\lfloor{\frac{n}{p}}\rfloor} b_\ell x^{n-p\ell} y^{p\ell},\qquad b,b_\ell\in\ku, \ell\in\I_{0, \lfloor{\frac{n}{p}}\rfloor}.
\end{align*}
Since $\Delta(h) - h\ot 1 - 1\ot h = 0$, then $b x^{p^{\ell}-1}\ot y^{n+1-p^{\ell}} + \Theta = 0$, where $\Theta$ is a linear combination
of tensors linearly independent to $x^{p^{\ell}-1}\ot y^{n+1-p^{\ell}}$. Then $b = 0$ and $h$ is as desired.

Now, we show that if we have an non zero element $r = a x^n + b y^n$, $a$, $b\in\ku$, then $r$ is primitive if and only if $n=p^t$ for some $t\in\N_0$. By Lucas theorem
$\binom{n}{k} = 0$ for all $k \in \I_{n-1}$ if and only if $n=p^t$ for some $t\in\N_0$. Thus the claim
for $x^n$ is valid and we can assume $b\neq 0$. In this case $r$ is never primitive if $p\nmid n$ and $n\neq 1$, because $\Delta(y^n)$ has as a summand $n y\ot y^{n-1}$. If $n=p\ell$, $r$ is primitive if and only if $\binom{\ell}{k} = 0$ for all $k \in \I_{\ell -1}$, and that happens if and only if $\ell$ is a power of $p$.

Now we classify all the primitives. As in the previous proof we just deal with $\cA = \wtoba$, the other algebras have similar arguments. 
Since the action of $g$ on $\Pc(\cA)^n$ is unipotent,
 if $\Pc(\cA)^n \neq 0$, then there exists $0\neq f\in\Pc(A)^n$ such that $g\rightharpoonup f = f$. 
By Lemma \ref{invariants-of-A} $f$ is of the form
\begin{align*}
f &= \sum_{\ell=0}^{\lfloor{\frac{n}{p}}\rfloor } c_\ell x^{n-p\ell} y^{p\ell} \qquad c_\ell \in\ku\, \forall \ell. 
\end{align*}

Since $\Delta(f) - f\ot 1 - 1\ot f = 0$, there exists  is a linear combination $\Xi_1$ of tensors linearly independent to $x^{n-p\ell}\ot y^{p\ell}$ and $y^{p\ell} \ot x^{n-p\ell}$ such that
\begin{align*}
0 = \Xi_1+\sum_{\substack{1\leq \ell \leq \lfloor{\frac{n}{p}\rfloor}\\ p\ell \neq n}} c_\ell (x^{n-p\ell}\ot y^{p\ell} + y^{p\ell} \ot x^{n-p\ell}).
\end{align*}
 Then $c_\ell = 0 $ for $1\leq \ell \leq \lfloor\frac{n}{p}\rfloor$ and $ p\ell \neq n$. If $p\nmid n$, $f = c_0 x^{n}$, but this is primitive only if $n$ is a power of $p$, so $\Pc(\cA)^n = 0$. If $n=pj$ for $j\in\N$, then $f = c_0 x^{pj} + c_{j} y^{pj}$. This is primitive if and only if $j$ is a power of $p$.
We then have $\Pc(\cA)^n = 0$ if $n$ is not a power of $p$. If $n=p^t$ with $t\in\N_0$, then $\ku x^{n} + \ku y^{n}$ are exactly the invariant primitives. 
We want to see that these are the only primitives in $\cA$. Fix $n =p^t$ for
$t\in\N_0$. If $\ku x^{p^t} + \ku y^{p^t} \subsetneq \Pc(\cA)^{p^t}$, the canonical Jordan form of $g$ in 
$\Pc(\cA)^{p^t}$ implies the existence of a non-invariant primitive $f$ such that $g\rightharpoonup f 
\overset{(\star)}{=} f + h$ with $h =a_0 x^{p^t} + a_1 y^{p^t} \neq 0$ an invariant primitive. We can assume then $f$ is a solution of the equation 
$(\star)$.  If $f = \sum_{i=0}^{p^t} \widetilde{b}_i x^i y^{n-i}$ , then we have by \eqref{g-action-y-n}
\begin{align*}
\sum_{i=1}^{p^t-1}\left[- i \widetilde{b}_i  + \frac{\widetilde{b}_{i-1}}{4}((-i+1)^2 - (-i+1))\right] x^{i+1} y^{p^t-i-1} & = a_0 x^{p^t} + a_1 y^{p^t}.
\end{align*}
 The same argument as at the end of the proof of Lemma \ref{invariants-of-A} shows that $\widetilde{b}_i=0$ for $p\nmid i$ and $i\neq p^t - 1$. 
 That is, $f$ has the form
\begin{align*}
f = b x^{p^t-1}y + \sum_{\ell=0}^{p^{t-1}} b_\ell x^{n-p\ell} y^{p\ell}, \qquad b,b_\ell\in\ku, \ell\in\I_{0, p^{t-1}}.
\end{align*}
Using now that $f$ is primitive,  we get
$b x^{p^t-1}\ot y + \Xi_2 = 0$, 
where $\Xi_2$ is a linear combination of tensors linearly independent of $x^{p^t-1}\ot y$. Then $b=0$ and 
$\Pc(\cA)^{p^t}= \ku x^{p^t} + \ku y^{p^t}$.

\end{proof}

\begin{lemma}\label{lema:G-lk-iso}
 $\mathcal{G}(k,\ell,a)\simeq \mathcal{G}(k,\ell,0)$ as braided Hopf algebras.
\end{lemma}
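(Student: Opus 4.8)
The plan is to realize the isomorphism by a single automorphism of the Jordan plane $\wtoba$ that absorbs the parameter $a$. For $c\in\ku$ let $\phi_c\colon\wtoba\to\wtoba$ be determined on generators by $\phi_c(x)=x$ and $\phi_c(y)=y+cx$. First I would check that $\phi_c$ is a braided Hopf algebra automorphism that is $\Gamma$-linear and $\Gamma$-colinear. It is a well-defined algebra map because $(y+cx)x-x(y+cx)+\tfrac12x^2=yx-xy+\tfrac12x^2$, and it is invertible with inverse $\phi_{-c}$; it is a coalgebra map since $x$ and $y+cx$ are primitive; and it commutes with the structure of $V$ because $g\rightharpoonup(y+cx)=(y+cx)+x=\phi_c(g\rightharpoonup y)$ and $\delta(y+cx)=g\ot(y+cx)$. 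In particular $\phi_c$ preserves the $\N_0$-grading, as it maps $\wtoba^1$ to itself.

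Since $\phi_c(x)=x$, it sends the defining ideal $I_0=(y^{p^k},x^{p^\ell})$ of $\mathcal G(k,\ell,0)$ to the ideal generated by $(y+cx)^{p^k}$ and $x^{p^\ell}$; as $I_0$ and $I_a=(y^{p^k}-ax^{p^k},x^{p^\ell})$ are braided Hopf ideals (the $\mathcal G$'s being braided Hopf quotients), it thus suffices to prove that $(y+cx)^{p^k}=y^{p^k}+\gamma(c)\,x^{p^k}$ for a non-constant polynomial $\gamma(c)$, and then to choose $c$ with $\gamma(c)=-a$, which forces $\phi_c(I_0)=I_a$ and yields the desired isomorphism. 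To determine the shape of $(y+cx)^{p^k}$ I would first record that $x^{p}$ and $y^{p}$ are central in $\wtoba$: this follows from \eqref{jordan-relations-between-monomials}, since the cross terms carry a factor $[1]^{[p]}=p!=0$ or $p=0$. Next, because $\phi_c$ is a morphism of braided Hopf algebras and $y^p$ is primitive, $(y+cx)^p=\phi_c(y^p)$ is a primitive element of degree $p$, so by Proposition \ref{prop:prop:calculation-of-primitives-in-pre-nichols} it lies in $\ku x^p+\ku y^p$; comparing the leading term in $y$ fixes the coefficient of $y^p$ to be $1$, whence $(y+cx)^p=y^p+\delta(c)\,x^p$ for a scalar $\delta(c)$.

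The coefficient $\delta(c)$ depends polynomially on $c$, and its only degree-$p$ contribution comes from $(cx)^p=c^p x^p$, so $\delta$ is non-constant; in fact the one-parameter group law $\phi_c\phi_{c'}=\phi_{c+c'}$ forces $\delta$ to be additive, hence $\delta(c)=\lambda c+c^p$ for some $\lambda\in\ku$. Iterating and using that $x^p,y^p$ are central, the identity $(A+B)^{p^{k-1}}=A^{p^{k-1}}+B^{p^{k-1}}$ in characteristic $p$ gives $(y+cx)^{p^k}=(y^p+\delta(c)x^p)^{p^{k-1}}=y^{p^k}+\delta(c)^{p^{k-1}}x^{p^k}$, so $\gamma(c)=\delta(c)^{p^{k-1}}$ is again a non-constant polynomial. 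As $\ku$ is algebraically closed, $\gamma$ is surjective and a suitable $c$ with $\gamma(c)=-a$ exists, completing the proof.

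The only delicate point is the computation of $(y+cx)^{p^k}$, and the main obstacle is to control it without an explicit Jacobson-type expansion of iterated commutators; I would emphasize that the form $y^{p^k}+\gamma(c)x^{p^k}$ is best obtained from primitivity (Proposition \ref{prop:prop:calculation-of-primitives-in-pre-nichols}) together with the centrality of $x^{p}$ and $y^{p}$. Finally, when $k\ge\ell$ one has $x^{p^k}\in(x^{p^\ell})$, so $I_a=I_0$ and the statement is immediate; the argument above handles uniformly the remaining case $k<\ell$.
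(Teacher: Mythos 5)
Your proof is correct, and it shares the paper's skeleton: both arguments use the braided Hopf algebra automorphism $\phi_c$ of $\wtoba$ fixing $x$ and sending $y\mapsto y+cx$, and both reduce the lemma to the identity $(y+cx)^{p^k}=y^{p^k}+\gamma(c)\,x^{p^k}$ together with the solvability of $\gamma(c)=-a$. The genuine difference lies in how that identity is obtained. The paper proves recursively the explicit expansion $(y+tx)^n=y^n+t^nx^n+\sum_{i=1}^{n-1}\sum_{j=i}^{n}\binom{n}{j}\stirling{j}{i}(-2)^{i-j}t^i x^j y^{n-j}$ and then uses the Stirling identities \eqref{eq:stirling} to pin down $(y+tx)^p=y^p+(t^p-t)x^p$ exactly, choosing $t$ as a root of $(t^p-t)^{p^{k-1}}+a=0$. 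You instead argue structurally: $\phi_c(y^p)$ is a degree-$p$ primitive (since $y^p$ is primitive and $\phi_c$ is a graded coalgebra map), hence lies in $\ku x^p+\ku y^p$ by Proposition \ref{prop:prop:calculation-of-primitives-in-pre-nichols}; centrality of $x^p$ and $y^p$ (which does follow from \eqref{jordan-relations-between-monomials} as you say) and the Frobenius identity for commuting elements then give $\gamma=\delta^{p^{k-1}}$, and algebraic closedness of $\ku$ yields a solution of $\gamma(c)=-a$ because $\delta$ has leading term $c^p$. There is no circularity here, as Proposition \ref{prop:prop:calculation-of-primitives-in-pre-nichols} precedes the lemma. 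What each approach buys: the paper's computation identifies $\delta(c)=c^p-c$ exactly, while yours avoids the Stirling manipulation entirely and needs only the non-constancy of $\delta$ --- in particular your additivity argument via $\phi_c\phi_{c'}=\phi_{c+c'}$, though correct, is dispensable. Two statements you left terse but which are easily completed: the coefficient of $y^p$ in $\phi_c(y^p)$ is $1$, seen for instance by applying the algebra map $\wtoba\to\wtoba/(x)\simeq\ku[y]$; and since $\phi_c$ is an algebra automorphism, $\phi_c(I_0)$ is the ideal generated by $\phi_c(y^{p^k})$ and $\phi_c(x^{p^\ell})$, so $\gamma(c)=-a$ indeed forces $\phi_c(I_0)=I_a$ and hence an isomorphism of the quotient braided Hopf algebras, exactly as claimed.
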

\begin{proof}
Let $t\in \ku$. The automorphism of Yetter-Drinfeld modules $\psi_t: V \to V$ given by
$x \mapsto x$, $y \mapsto y+tx$
induces an automorphism of braided Hopf algebras $\Psi_t: T(V) \to T(V)$ that descends to $\Psi_t:\wtoba \to \wtoba$ because 
$\Psi(yx-xy+\frac{1}{2} x^2) = yx-xy+\frac{1}{2} x^2$. Thus we have a  morphism of groups $\Gb_a \to \Aut \wtoba$, $t \mapsto \Psi_t$.

Let now $t\in\ku$ be a solution of the equation $(t^p-t)^{p^{k-1}} + a = 0$. Arguing recursively
we prove  that the following equalities hold in $\wtoba$:
\begin{align*}
(y+tx)^n &= y^n + t^n x^n + \sum_{i=1}^{n-1} \sum_{j=i}^{n}\binom{n}{j}\stirling{j}{i} (-2)^{i-j}
t^i  x^j y^{n-j},&  n&\in\N.
\end{align*}
Hence $(y+tx)^p = y^p + (t^p-t) x^p$ by \eqref{eq:stirling}. 
Since $x^p$ commutes with $y^p$, we get $(y+tx)^{p^{k}} = y^{p^{k}} + (t^p-t)^{p^{k-1}} x^{p^k} = y^{p^{k}} - a x^{p^k}$ . 
Then $\Psi$ induces a morphism of braided Hopf algebras $\mathcal{G}(k,\ell,0) \longrightarrow \mathcal{G}(k,\ell,a)$. 
Repeating the argument with $\Psi^{-1}$ we conclude that $\mathcal{G}(k,\ell,a) \simeq \mathcal{G}(k,\ell,0)$.  
\end{proof}

Because of the previous Lemma, we introduce $\mathcal{G}(k,\ell)  \coloneqq \mathcal{G}(k,\ell, 0)$. 
We now state the main result of this Section.

\begin{theorem}\label{thm;prenichols}
If $\toba$ is a finite-dimensional pre-Nichols algebra of $V$ that factorizes through $\wtoba$, 
then $\toba \simeq \mathcal{G}(k,\ell)$ for unique $k,\ell\in\N$.
\end{theorem}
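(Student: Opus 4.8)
The plan is to combine the computation of primitive elements in Proposition~\ref{prop:prop:calculation-of-primitives-in-pre-nichols} with the standard observation that, in a graded connected (braided) Hopf algebra, the lowest nonzero homogeneous component of a graded Hopf ideal consists of primitive elements. Concretely, writing $\toba = \wtoba/I$ for a graded Hopf ideal $I$, any relation of minimal degree is primitive in $\wtoba$ and hence lies in $\ku x^{p^t}+\ku y^{p^t}$ for some $t$. This ``lowest relations are primitive'' principle is the engine I would invoke three times: once to find the $x$-relation, once to find the $y$-relation, and once to show that nothing else survives. The overall shape of the argument is therefore to peel off the two defining relations of $\mathcal{G}(k,\ell,a)$ as successive lowest-degree primitive relations and then rule out any further relation.

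First I would locate the $x$-relation. The subalgebra $\ku[x]\subseteq\wtoba$ is an ordinary Hopf subalgebra, since $x$ is primitive, the braiding is trivial on it, and \eqref{comultiplication-x-n} keeps $\Delta(x^n)$ inside $\ku[x]\ot\ku[x]$. As $\toba$ is finite-dimensional, the image of $\ku[x]$ is a finite-dimensional Hopf quotient of $\ku[x]$, whose kernel is generated by its lowest primitive $x^{p^\ell}$ (by Lucas's theorem, as in the proof of the primitives proposition). Thus $I\cap\ku[x]=(x^{p^\ell})$ for a minimal $\ell\geq 1$, and $\toba$ factors through $\mathcal{F}(\ell)=\wtoba/(x^{p^\ell})$. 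I would then repeat the analysis inside $\mathcal{F}(\ell)$: finite-dimensionality forces $y$ to be nilpotent, so $\bar I=I/(x^{p^\ell})$ is nonzero and its lowest-degree element is primitive in $\mathcal{F}(\ell)$, of the form $\beta y^{p^k}+\alpha x^{p^k}$. Minimality of $\ell$ forbids $\beta=0$ (that would give $x^{p^k}\in I$ with $k<\ell$), so after scaling the relation reads $y^{p^k}-a x^{p^k}$ (with $x^{p^k}=0$ when $k\geq\ell$). Hence $\toba$ factors through $\mathcal{G}(k,\ell,a)=\wtoba/(x^{p^\ell},\,y^{p^k}-ax^{p^k})$.

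The heart of the proof, and the step I expect to be the main obstacle, is to show that $\mathcal{G}(k,\ell,a)\twoheadrightarrow\toba$ is an isomorphism, i.e. that $\bar{\bar I}:=I/(x^{p^\ell},\,y^{p^k}-ax^{p^k})$ vanishes. If not, its lowest-degree element $w$ is a nonzero primitive of $\mathcal{G}(k,\ell,a)$, so by Proposition~\ref{prop:prop:calculation-of-primitives-in-pre-nichols} it lies in $\ku x^{p^s}+\ku y^{p^s}$ for some $s\geq 1$. I would then run a case analysis according to the position of $s$ relative to $k$ and $\ell$, using that $x^{p^s}\neq 0$ precisely when $s<\ell$, and that $y^{p^s}$ is an independent PBW element for $s<k$ while $y^{p^s}=a^{p^{s-k}}x^{p^s}$ for $s\geq k$. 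In every range one checks that a nonzero $w$ forces either $x^{p^s}\in I$ with $s<\ell$, contradicting the minimality of $\ell$, or a $y$-involving relation of degree $p^s<p^k$ that survives in $\mathcal{F}(\ell)$, contradicting the minimality of $k$; and for $s\geq\max(k,\ell)$ the primitive space is already zero. The delicate bookkeeping is to track which monomials remain independent in $\mathcal{G}(k,\ell,a)$ and to lift $w$ back to $\wtoba$ (resp. push it into $\mathcal{F}(\ell)$) so that these minimality contradictions are genuine; this is exactly what the PBW-bases control. It follows that $\toba\simeq\mathcal{G}(k,\ell,a)$, and then $\simeq\mathcal{G}(k,\ell)$ by Lemma~\ref{lema:G-lk-iso}.

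Finally, for uniqueness I would extract intrinsic invariants. The line $\ku x\subseteq V$ is the unique $\Gamma$-invariant line, so $x$ is canonical up to scalar and its nilpotency index is an invariant of $\toba$ as a braided Hopf algebra; in $\mathcal{G}(k,\ell)$ this index equals $p^\ell$, pinning down $\ell$. Since $\dim\mathcal{G}(k,\ell)=p^{k+\ell}$, the value of $k$ is then determined as well, so the pair $(k,\ell)$ is unique.
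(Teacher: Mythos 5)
Your proposal is correct, and it runs on the same engine as the paper's proof: the principle that the lowest-degree homogeneous part of a graded Hopf ideal is primitive, the computation of primitives in Proposition~\ref{prop:prop:calculation-of-primitives-in-pre-nichols}, and the final identification $\mathcal{G}(k,\ell,a)\simeq\mathcal{G}(k,\ell)$ via Lemma~\ref{lema:G-lk-iso}. The organization, however, is different. The paper takes the lowest-degree element $b_1x^{p^k}+b_2y^{p^k}$ of $\ker(\wtoba\twoheadrightarrow\toba)$ and branches on whether $b_2\neq 0$, passing through $\mathcal{K}(k,a)$ in one case and $\mathcal{F}(k)$ in the other; in each branch minimality forces the next relation to be a pure power, and the bound that all primitives of $\mathcal{G}(k,\ell,a)$ sit in degree $<\max\{p^k,p^\ell\}$ kills any third relation in one line. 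You avoid the case split entirely by first extracting the minimal $x$-relation from the ordinary Hopf subalgebra $\ku[x]$ (a device the paper never uses, and which is sound: $I\cap\ku[x]$ is a graded Hopf ideal of $\ku[x]$, hence equals $(x^{p^\ell})$), then finding the $y$-relation inside $\mathcal{F}(\ell)$, and you never need $\mathcal{K}(k,a)$. The price is that your concluding step is a genuinely three-way case analysis; the one range that needs care is $k\le s<\ell$, where the contradiction with minimality of $\ell$ requires knowing that $y^{p^k}-ax^{p^k}$ lies in $I$ itself (not merely that its class lies in $\bar I$) so that one can subtract it from a lift of $w$ and conclude $x^{p^s}\in I$; this does hold because $x^{p^\ell}\in I$, so your bookkeeping goes through exactly as the PBW-bases suggest.

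One point deserves justification: in your uniqueness argument, the claim that "$x$ is canonical up to scalar" presupposes that a braided Hopf algebra isomorphism $\mathcal{G}(k,\ell)\to\mathcal{G}(k',\ell')$ carries $V$ into $V$, which is not automatic since such an isomorphism need not be graded. It does preserve primitives, and among the primitives $\sum_t\bigl(a_tx^{p^t}+b_ty^{p^t}\bigr)$ only those with $t=0$ have $\Gamma$-coaction of the form $g\ot(\cdot)$; so an isomorphism in $\ydG$ sends $x$ to a $g$-invariant element of $V$, i.e.\ into $\ku x$, and then your nilpotency-index-plus-dimension argument pins down $(k,\ell)$. With that supplement your uniqueness proof is complete---and it is worth noting the paper's own proof is silent on uniqueness, so here you go beyond it.
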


\begin{proof}
By assumption, there is a morphism $\pi: \wtoba\twoheadrightarrow \toba$ of pre-Nichols algebras. Since $\dim \wtoba = \infty$, $\ker \pi \neq 0$. 
Pick $0 \neq f\in \ker \pi$ homogeneous of minimal degree $m$;  then $f\in \Pc(\wtoba)$. 
By Proposition \ref{prop:prop:calculation-of-primitives-in-pre-nichols} , $m = p^k$,  $k\in\N$, and
there exists  $(b_1,b_2) \in\ku^2 - 0$ such that $f = b_1 x^{p^k} + b_2 y^{p^k}$. We have now two cases, $b_2 \neq 0$ and $b_2 = 0$.

\smallbreak
If $b_2\neq 0$, taking $a = -\frac{b_1}{b_2}$ we get a morphism of pre-Nichols algebras $\pi_1: \mathcal{K}(k,a)\twoheadrightarrow \toba$.
Since $\dim \mathcal{K}(k,a) = \infty$, $\ker \pi_1 \neq 0$. 
Pick $0 \neq f_1\in \ker \pi_1$ homogeneous of minimal degree $m_1$;  then $f_1\in \Pc(\mathcal{K}(k,a))$. 
By Proposition \ref{prop:prop:calculation-of-primitives-in-pre-nichols} , $m_1 = p^{\ell}$,  $\ell\in\N$, and
there exists  $(c_1,c_2) \in\ku^2 - 0$ such that $f_1 = c_1 x^{p^\ell} + c_2 y^{p^\ell} \in \ker \pi_1$. 
Now the preimage of $f_1$ in $\wtoba$ belongs to $\ker \pi$, hence  $\ell\geq k$ by the minimality of $m = p^k$. 
So $f_1 = c_1 x^{p^\ell} + c_2 y^{p^\ell} = (c_1 + c_2 a^{p^{\ell-k}})x^{p^\ell}$. Hence $0 \neq x^{p^\ell} \in \ker \pi_1$ and 
we get a morphism of pre-Nichols algebras
$\pi_2: \mathcal{G}(k,\ell,a)\twoheadrightarrow \toba$ which is actually an isomorphism. Otherwise
pick $0 \neq f_2\in \ker \pi_2$ homogeneous of minimal degree $m_2 = p^h$;  then $f_1\in \Pc(\mathcal{G}(k, \ell,a))$. 
Now the preimage of $f_2$ in $\mathcal{K}(k,a)$ belongs to $\ker \pi_1$, hence $h \geq \ell\geq k$, 
but all the primitives in $\mathcal{G}(k,\ell,a)$ have degree $< \max\{p^\ell,p^k\}$. So $\toba\simeq\mathcal{G}(k,\ell,a) \simeq \mathcal{G}(k,\ell)$
by Lemma \ref{lema:G-lk-iso}.

\smallbreak
If $b_2= 0$, then we get a morphism of pre-Nichols algebras $\pi_3: \mathcal{F}(k)\twoheadrightarrow \toba$. 
Since $\dim \mathcal{F}(k) = \infty$, $\ker \pi_3 \neq 0$. 
Pick $0 \neq f_3\in \ker \pi_3$ homogeneous of minimal degree $m_3$;  then $f_3\in \Pc(\mathcal{F}(k))$, 
$m_3 = p^{\ell}$,  $\ell\in\N$, and
there exists  $(c_1,c_2) \in\ku^2 - 0$ such that $f_3 = c_1 x^{p^\ell} + c_2 y^{p^\ell} \in \ker \pi_3$. 
Since the preimage of $f_3$ in $\wtoba$ belongs to $\ker \pi$,  $\ell\geq k$. 
So $f_3 =  c_2 y^{p^\ell} $. Hence $0 \neq y^{p^\ell} \in \ker \pi_3$ and 
we get an isomorphism 
$\pi_4: \mathcal{G}(\ell, k, 0)\twoheadrightarrow \toba$  arguing as above.
\end{proof}

\subsubsection{The poset} We have the following picture in the poset of pre-Nichols algebras of the restricted Jordan plane:

\begin{align*}
\xymatrix@C-5pt{ 
 && &\mathcal{K}(k,a) \ar @{->>}[1,1] & & &
\\
T(V) \ar @{->>}[0,2]   & & \wtoba \ar@{->>}[-1,1] \ar@{->>}[1,1] & &  \mathcal{G}(k,\ell,a) \ar@{->>}[0,2]  & & \toba(V)
\\ & & & \mathcal{F}(\ell) \ar @{->>}[-1,1] & &
}
\end{align*}

For two finite dimensional pre-Nichols algebras $\cR_1$ and $\cR_2$ from the previous families, we say $\cR_1 \geq \cR_2$ if and only if there exist a pre-Nichols algebra
epimorphism $\cR_1\twoheadrightarrow \cR_2$. This is a well defined poset since if $\cR_1 \geq \cR_2$ and $\cR_1 \leq \cR_2$,
then $\cR_1 = \cR_2$ by the respective PBW bases and because a pre-Nichols algebra morphism is the identity in $V$.

Although $\mathcal{G}(k,\ell,a) \simeq \mathcal{G}(k,\ell)$ as braided Hopf algebras, they are different as pre-Nichols algebras
if $a\neq 0$ and $k< \ell$. This is because we require pre-Nichols algebras morphisms to be the identity in $V$. We then reserve the 
notation $\mathcal{G}(k,\ell,a)$ only for the case $a\neq 0$ and $k < \ell$.

The aim of this subsection is the complete description of this poset.

\begin{lemma}
The following comparisons in the poset are valid:
\begin{enumerate}[leftmargin=*,label=\rm{(\roman*)}] 
	\item\label{lemma:pre-Nichols-comparison-1} $\mathcal{G}(k,\ell,a) \geq \mathcal{G}(k',\ell',b)$ iff one of the
	following conditions holds:
	\begin{enumerate}
		\item $\ell \geq \ell'$, $\ell' > k \geq k'$ and $a = b^{p^{k-k'}}$.
		\item $\ell \geq \ell'$ and $k\geq\ell'>k'$.
	\end{enumerate}
	\item\label{lemma:pre-Nichols-comparison-2} $\mathcal{G}(k,\ell,a) \geq \mathcal{G}(k',\ell')$ if and only if $\ell \geq \ell'$,
	$k\geq \ell'$ and $k \geq k'$.
	\item\label{lemma:pre-Nichols-comparison-3} $\mathcal{G}(k,\ell) \geq \mathcal{G}(k',\ell',a)$ if and only if $\ell \geq \ell'$
	and $k\geq \ell'>k'$.
	\item\label{lemma:pre-Nichols-comparison-4} $\mathcal{G}(k,\ell) \geq \mathcal{G}(k',\ell')$ if and only if $\ell \geq \ell'$ and
	$k \geq k'$.
\end{enumerate}
\end{lemma}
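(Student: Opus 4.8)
The plan is to reduce all four statements to a single combinatorial criterion about containment of defining ideals, and then read off (i)--(iv) as special cases. Since every algebra in these families is a quotient of $\wtoba$, and a morphism of pre-Nichols algebras is by definition the identity on $V$, any pre-Nichols morphism between two such quotients must send $x\mapsto x$ and $y\mapsto y$; as these generate the target, the morphism is forced to be the canonical projection and is automatically surjective. Hence, writing $I(k,\ell,a)=(y^{p^k}-ax^{p^k},\,x^{p^\ell})$ for the defining ideal of $\mathcal{G}(k,\ell,a)$ inside $\wtoba$, we have $\mathcal{G}(k,\ell,a)\geq\mathcal{G}(k',\ell',b)$ if and only if $I(k,\ell,a)\subseteq I(k',\ell',b)$, i.e. if and only if both generators $y^{p^k}-ax^{p^k}$ and $x^{p^\ell}$ vanish in $\mathcal{G}(k',\ell',b)$.

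Next I would test each generator against the PBW basis $\{x^iy^j: i\in\I_{0,p^{\ell'}-1},\ j\in\I_{0,p^{k'}-1}\}$ of $\mathcal{G}(k',\ell',b)$. For the relation $x^{p^\ell}$: since $x^{p^\ell}=x^{p^\ell-p^{\ell'}}x^{p^{\ell'}}$ and $x^{p^{\ell'}}=0$ in the target, $x^{p^\ell}=0$ whenever $\ell\geq\ell'$; conversely, if $\ell<\ell'$ then $p^\ell\leq p^{\ell'-1}<p^{\ell'}$, so $x^{p^\ell}$ is a nonzero PBW basis element. Thus $x^{p^\ell}=0$ in the target if and only if $\ell\geq\ell'$. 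For the relation $y^{p^k}-ax^{p^k}$ I would split on $k$ versus $k'$. If $k<k'$, then $y^{p^k}$ is itself a PBW basis element (as $p^k<p^{k'}$), distinct from the monomial $x^{p^k}$, so $y^{p^k}-ax^{p^k}\neq0$ and the relation fails. If $k\geq k'$, I use $y^{p^{k'}}=bx^{p^{k'}}$ in the target together with the characteristic-$p$ Frobenius identity to compute $y^{p^k}=(y^{p^{k'}})^{p^{k-k'}}=(b\,x^{p^{k'}})^{p^{k-k'}}=b^{p^{k-k'}}x^{p^k}$, so that $y^{p^k}-ax^{p^k}=(b^{p^{k-k'}}-a)x^{p^k}$; this vanishes exactly when $b^{p^{k-k'}}=a$ or when $x^{p^k}=0$, and by the previous computation $x^{p^k}=0$ if and only if $k\geq\ell'$. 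Assembling these facts yields the unified criterion: $\mathcal{G}(k,\ell,a)\geq\mathcal{G}(k',\ell',b)$ if and only if $\ell\geq\ell'$, $k\geq k'$, and either $k\geq\ell'$ or $a=b^{p^{k-k'}}$.

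Finally I would specialize, keeping in mind the standing convention that a nonzero scalar parameter forces the first index to be strictly smaller than the second (so $a\neq0$ in $\mathcal{G}(k,\ell,a)$ means $k<\ell$, and likewise $b\neq0$ means $k'<\ell'$), while $\mathcal{G}(k,\ell)=\mathcal{G}(k,\ell,0)$. The bookkeeping of the scalar condition is the only delicate point; the key observation is that $0^{p^{k-k'}}=0$ always, whereas $c^{p^{k-k'}}\neq0$ for $c\neq0$. Hence whenever exactly one of the two scalars is nonzero the equation $a=b^{p^{k-k'}}$ cannot hold, so the criterion collapses to requiring $k\geq\ell'$; this yields (ii) (source scalar nonzero, target scalar zero, leaving both $k\geq\ell'$ and $k\geq k'$) and (iii) (source scalar zero, target scalar nonzero, where the target convention supplies the strict inequality $\ell'>k'$). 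When both scalars vanish (case (iv)) the equation $0=0$ is automatic and only $\ell\geq\ell'$ and $k\geq k'$ survive. When both scalars are nonzero (case (i)) one has $k<\ell$ and $k'<\ell'$: the subcase $k<\ell'$ forces $a=b^{p^{k-k'}}$ and produces (a), while $k\geq\ell'$ produces (b). The main obstacle is therefore not any single computation but the disciplined tracking of these nonvanishing conventions through the specialization; everything else follows directly from the PBW bases and the Frobenius identity.
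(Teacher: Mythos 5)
Your proposal is correct and uses essentially the same ingredients as the paper's proof: the forced identity on $V$ makes any morphism the canonical projection, so everything reduces to checking when $x^{p^\ell}$ and $y^{p^k}-ax^{p^k}$ vanish in the target, which you settle exactly as the paper does via the PBW bases and the Frobenius identity $y^{p^k}=(y^{p^{k'}})^{p^{k-k'}}=b^{p^{k-k'}}x^{p^k}$. The only difference is organizational — you prove one unified criterion ($\ell\geq\ell'$, $k\geq k'$, and $k\geq\ell'$ or $a=b^{p^{k-k'}}$) and then specialize, whereas the paper runs the same computations separately in each of the four cases.
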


\begin{proof}
	For simplicity we denote by $\sx$, $\sy$ the corresponding generators in $\mathcal{G}(k',\ell',a)$ or $\mathcal{G}(k',\ell')$, and
	by $x$, $y$ the ones in $\mathcal{G}(k,\ell,a)$ or $\mathcal{G}(k,\ell)$.
	
	\ref{lemma:pre-Nichols-comparison-1} If any of the two conditions for the constants $\ell,\ell',k,k',a,b$ holds, then clearly the
	corresponding pre-Nichols algebra morphism exists. 
	Let $\pi\colon\mathcal{G}(k,\ell,a) \twoheadrightarrow \mathcal{G}(k',\ell',b)$ be a
	pre-Nichols algebras morphism. Then $\pi(x^{p^\ell}) = \sx^{p^{\ell}} = 0$, hence $\ell\geq\ell'$. If $k\geq \ell' > k'$ we are done,
	if $\ell' > k \geq k'$, then $\sy^{p^{k'}} - b \sx^{p^{k'}} = 0$ hence $\sy^{p^{k}} - b^{p^{k-k'}} \sx^{p^{k}} = 0$. So $0 = \pi(y^{p^{k}} - a x^{p^{k}}) = \sy^{p^{k}} - a \sx^{p^{k}} = (b^{p^{k-k'}}-a)\sx^{p^{k}}$, this implies $b^{p^{k-k'}}=a$. 
	Finally if $k<k'$, then $\pi(y^{p^{k}} - a x^{p^{k}}) = \sy^{p^{k}} - a \sx^{p^{k}} = 0$, and this implies $a=0$ a contradiction.
	
	\ref{lemma:pre-Nichols-comparison-2} If $\ell \geq \ell'$,$k\geq \ell'$ and $k \geq k'$, then clearly the
	corresponding morphism exists. Let 
	$\pi\colon\mathcal{G}(k,\ell,a) \twoheadrightarrow \mathcal{G}(k',\ell')$ be a pre-Nichols algebras morphism.  The same argument as
	in \ref{lemma:pre-Nichols-comparison-1} shows  $\ell\geq\ell'$. If $k<\ell'$ then $\pi(y^{p^k}-a x^{p^k}) = 0 = \sy^{p^k} - a \sx^{p^k}$, and this implies $a=0$ since $\sx^{p^k} \neq 0$, a contradiction. Then $k\geq \ell'$ and $\pi(y^{p^k}-a x^{p^k}) = \sy^{p^k} = 0$ hence
	$k\geq k'$.
	
	\ref{lemma:pre-Nichols-comparison-3} If $\ell \geq \ell'$ and $k\geq \ell'>k'$ then the corresponding morphism exist. Now
	let $\pi\colon\mathcal{G}(k,\ell) \twoheadrightarrow \mathcal{G}(k',\ell',a).$ By the same argument as in \ref{lemma:pre-Nichols-comparison-1}, $\ell \geq \ell'$. If $k\geq \ell'>k'$ we are done. If $\ell' > k \geq k'$, then
	$\pi(y^{p^k}) = 0 = \sy^{p^k}$. Since $0 = \sy^{p^{k'}} - a \sx^{p^{k'}}$ implies 
	$0 = \sy^{p^k} - a^{p^{k-k'}}\sx^{p^k} = a^{p^{k-k'}}\sx^{p^k}$.
	Then $a=0$, a contradiction. Only the case $k'> k$ remains. In this case $\pi(y^{p^k}) = 0 = \sy^{p^k}$, but $\sy^{p^k}\neq 0$, a
	a contradiction.
	
	\ref{lemma:pre-Nichols-comparison-4} If $\ell \geq \ell'$ and $k\geq k'$ then the corresponding morphism exist. Now
	let $\pi\colon\mathcal{G}(k,\ell) \twoheadrightarrow \mathcal{G}(k',\ell').$ By the same argument as in \ref{lemma:pre-Nichols-comparison-1}, $\ell \geq \ell'$. Now since $\pi(y^{p^k}) = 0 = \sy^{p^k}$ then $k\geq k'$.
\end{proof}

\subsection{Finite-dimensional Hopf algebras}

By bosonization we obtain new examples of Hopf algebras.

\begin{coro}\label{coro:pre-Nichols-bosonization}
\begin{enumerate}[leftmargin=*,label=\rm{(\roman*)}] 
\item The Hopf algebras $\mathcal{K}(k,a)\# \ku \Gamma$ and $\mathcal{F}(\ell)\# \ku \Gamma$   have \newline $\GK = 1$.  

\item The Hopf algebras $H_{k,\ell,a} = \mathcal{G}(k,\ell,a) \# \ku \Gamma$   have dimension $p^{\ell + k + 1}$. \qed 
\end{enumerate} 

\end{coro}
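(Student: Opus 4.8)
The plan is to handle the two items by different elementary means: (ii) is a pure dimension count, while (i) reduces to computing the Gelfand--Kirillov dimension of the two infinite-dimensional factors.

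For (ii) I would read off the PBW-basis of $\mathcal{G}(k,\ell,a)$ from the Proposition that established it, namely $\{x^iy^j : 0\le i\le p^\ell-1,\ 0\le j\le p^k-1\}$. This set has exactly $p^\ell\cdot p^k=p^{\ell+k}$ elements, so $\dim\mathcal{G}(k,\ell,a)=p^{\ell+k}$. As a vector space the bosonization is $\mathcal{G}(k,\ell,a)\ot\ku\Gamma$, and $\dim\ku\Gamma=|\Gamma|=p$; hence $\dim H_{k,\ell,a}=p^{\ell+k}\cdot p=p^{\ell+k+1}$.

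For (i) I would first remove the bosonization. The smash product $\cA\#\ku\Gamma$, for $\cA$ equal to $\mathcal{K}(k,a)$ or $\mathcal{F}(\ell)$, is free of finite rank $|\Gamma|=p$ as a left module over the subalgebra $\cA$; since an algebra that is finitely generated as a module over a subalgebra shares its $\GK$, it suffices to prove $\GK\mathcal{K}(k,a)=\GK\mathcal{F}(\ell)=1$. To do this I would exhibit in each case a central polynomial subalgebra isomorphic to $\ku[t]$ over which the algebra is a finite module, because $\GK\ku[t]=1$ and the same finite-module principle then forces $\GK\cA=1$.

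The centrality is the only point requiring a short computation. From the commutation formula \eqref{jordan-relations-between-monomials}, using $\binom{p}{k}=0$ for $0<k<p$ and $p!\equiv 0 \pmod p$, one gets $yx^p=x^py$ and $y^px=xy^p$ in $\wtoba$; thus $x^p$ and $y^p$ are central, and so is every $x^{p^m},\,y^{p^m}$, both in $\wtoba$ and in all its quotients. In $\mathcal{K}(k,a)$ the PBW-monomials have $j$ bounded by $p^k-1$ but $i$ unbounded; writing $x^i=(x^p)^q x^r$ with $0\le r\le p-1$ shows $\mathcal{K}(k,a)$ is generated over the central $\ku[x^p]\simeq\ku[t]$ by the finite set $\{x^ry^j:0\le r\le p-1,\ 0\le j\le p^k-1\}$, whence $\GK\mathcal{K}(k,a)=1$. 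Symmetrically, in $\mathcal{F}(\ell)$ the exponent $i$ is bounded by $p^\ell-1$ while $j$ is free; factoring $y^j=(y^p)^q y^s$ with $0\le s\le p-1$ makes $\mathcal{F}(\ell)$ a finite module over the central $\ku[y^p]\simeq\ku[t]$, so $\GK\mathcal{F}(\ell)=1$. As a cross-check one can avoid centrality entirely and count the PBW-monomials of total degree $\le n$: the count grows like a constant times $n$ (of order $p^k n$, resp. $p^\ell n$), which already yields $\GK=1$. I expect the centrality verification to be the only mild obstacle; everything else is bookkeeping with the PBW-bases together with standard properties of $\GK$ under finite module extensions.
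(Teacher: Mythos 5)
Your proof is correct, and it does more than the paper, which offers no written argument at all: the corollary is stated without proof, being considered immediate from the PBW-bases established earlier. Item (ii) is exactly your count: $\mathcal{G}(k,\ell,a)$ has the $p^{k+\ell}$ basis elements $x^iy^j$, and tensoring with $\ku\Gamma$ multiplies the dimension by $p$. For item (i) the intended one-line argument is the one you relegate to a cross-check: the PBW-monomials of $\mathcal{K}(k,a)$ (resp.\ $\mathcal{F}(\ell)$) of total degree at most $n$ number roughly $p^k n$ (resp.\ $p^\ell n$), so the growth is linear and $\GK = 1$, and bosonizing with the $p$-dimensional $\ku\Gamma$ does not change $\GK$ because the smash product is a free module of rank $p$ over the pre-Nichols algebra. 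Your primary route --- verifying from \eqref{jordan-relations-between-monomials}, with $\binom{p}{k}\equiv 0$ for $0<k<p$ and $p!\equiv 0$, that $x^p$ and $y^p$ are central in $\wtoba$ and hence in every quotient, and then exhibiting $\mathcal{K}(k,a)$ and $\mathcal{F}(\ell)$ as finitely generated modules over the central polynomial subalgebras $\ku[x^p]$ and $\ku[y^p]$ --- is heavier than necessary for the stated claim, but it is valid and buys genuinely more: module-finiteness over a central $\ku[t]$ shows these algebras are also noetherian and PI, the same mechanism the paper invokes for $\widetilde{D}$ in Proposition \ref{prop:ringtheoretical} via \cite{McRob}. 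Both routes rest on the same standard fact, that $\GK$ is invariant under passing to an algebra that is a finitely generated module over a subalgebra, and your use of it (for the smash product and for the central subalgebras) is correctly applied throughout.
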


\section{The graded dual of the Jordan plane}
In this Section $\car \ku \neq 2$.
We present the graded dual $\cE$ of the Jordan plane $\widetilde{\toba}$ by generators and relations. In this section the duals are
calculated in the category $\ydG$, hence the canonical identification $(U_1\ot U_2)^* \simeq U_2^* \ot U_1^*$ for $U_1$, $U_2\in\ydG$ is used.
Let $(\alpha_{i,j})_{i,j\in\N_0}$ be the dual basis of the basis $(x^i y^j)_{i,j\in\N_0}$ of $\widetilde{\toba}$. 
Clearly $\cE$ is linearly spanned by the $\alpha_{i,j}$'s.

\begin{prop}\label{prop:graded-dual}
The braided Hopf algebra $\cE$ is presented by generators $\x^{[n]}$, $\yt^{[n]}$, $n\in\N$, and relations
\begin{align}\label{eq:rels-graded-dual}
\begin{aligned}
\x^{[n]} \x^{[m]} &= \binom{n+m}{n} \x^{[n+m]}, \qquad \yt^{[n]} \yt^{[m]} = \binom{n+m}{n} \yt^{[n+m]},\\
\x^{[n]} \yt^{[m]} &= \sum_{k=0}^{m} \binom{n+k}{k} (-1)^k \frac{[-n]^{[k]}}{2^k} \yt^{[m-k]} \x^{[n+k]}, \qquad \x^{[0]} = \yt^{[0]} = 1,
\end{aligned}
\end{align}
for all $n,m\in \N_0$.
The family $(\yt^{[m]} \x^{[n]})_{n,m\in\N_0}$ is a basis of $\cE$. The coproduct and the braiding are given by
\begin{gather}\label{eq:coprod-graded-dual}
\begin{aligned}
\Delta(\x^{[n]}) &= \sum_{k=0}^{n} \x^{[k]} \ot \x^{[n-k]}, \\
\Delta(\yt^{[n]}) &= \sum_{k=0}^{n}\sum_{i=0}^{k}(-1)^i\frac{[n-k]^{[i]}}{2^i} \yt^{[n-k]}\ot \yt^{[k-i]} \x^{[i]} .
\end{aligned}
\\
\label{eq:braiding-graded-dual}
\begin{split}
c(\yt^{[j]} \x^{[i]}  \ot \yt^{[m]} \x^{[n]})& = \\
\yt^{[m]} \x^{[n]}  &\ot 
\sum_{k=0}^{j} \binom{k+i}{k}(-1)^k\frac{[2(n+m)]^{[k]}}{2^k}  \yt^{[j-k]} \x^{[k+i]}, 
\end{split} 
\end{gather}
for all $n,m,i,j\in\N_0$.
\end{prop}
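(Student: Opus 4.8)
The plan is to realize $\cE$ as the graded dual of the braided Hopf algebra $\wtoba$ and to transport all structure maps along the rigid duality of $\ydG$. Concretely, set $\x^{[n]} := \alpha_{n,0}$ and $\yt^{[n]} := \alpha_{0,n}$, the functionals dual to $x^n$ and $y^n$; since $\wtoba$ is connected and graded with finite-dimensional homogeneous components, $\cE$ is again a graded braided Hopf algebra in $\ydG$ whose product is the transpose of $\Delta_{\wtoba}$, whose coproduct is the transpose of the multiplication, and whose braiding is the dual braiding, all read through the identification $(U_1\ot U_2)^*\simeq U_2^*\ot U_1^*$. The first task is the basis statement: I would compute the products $\yt^{[m]}\x^{[n]}$ and show that $\yt^{[m]}\x^{[n]} = \alpha_{n,m}$, so that the proposed family is exactly the dual basis $(\alpha_{n,m})$ and hence a basis of $\cE$. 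The graded component $\cE^d$ then has dimension $d+1$, matching the number of pairs $(n,m)$ with $n+m=d$.

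Next I would derive the defining relations by dualizing the coproduct formulas of Lemma \ref{lemma-action-and-comultiplication-Jordan}. For the purely-$x$ relation, observe that $\ku[x]\subseteq\wtoba$ is a braided Hopf subalgebra ($x$ primitive, $c(x\ot x)=x\ot x$, and $\Delta(x^n)\in\ku[x]\ot\ku[x]$); dualizing the polynomial coalgebra structure yields the divided-power relation $\x^{[n]}\x^{[m]}=\binom{n+m}{n}\x^{[n+m]}$. For the purely-$\yt$ relation, pairing $\yt^{[n]}\yt^{[m]}$ against $x^a y^b$ and inspecting \eqref{comultiplication-y-n} shows that only the term with $a=0$, $b=n+m$ survives, with coefficient $\binom{n+m}{n}$. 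The mixed relation requires the full coproduct of $x^a y^b$: I would pair $\x^{[n]}\yt^{[m]}$ against the basis, re-expand in the normal form $\yt^{[m-k]}\x^{[n+k]}=\alpha_{n+k,m-k}$, and read off the coefficients $\binom{n+k}{k}(-1)^k[-n]^{[k]}/2^k$ from the raising-factorial weights appearing in \eqref{comultiplication-y-n}.

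The coproduct and braiding are obtained the same way. Transposing the multiplication of $\wtoba$ (equivalently, dualizing \eqref{comultiplication-x-n} and \eqref{comultiplication-y-n}) gives \eqref{eq:coprod-graded-dual}, while transposing the braiding $c_V$ --- using the commutation formulas between $x$- and $y$-powers in \eqref{jordan-relations-between-monomials} --- gives \eqref{eq:braiding-graded-dual}; the factor $[2(n+m)]^{[k]}$ is the dual avatar of the scalar by which $g^{n+m}$ twists $y$-powers in $\wtoba$. Finally, to see that \eqref{eq:rels-graded-dual} is a complete presentation, I would let $\cB$ be the abstract algebra on the generators $\x^{[n]},\yt^{[n]}$ modulo \eqref{eq:rels-graded-dual}; the three relation families let one rewrite any word into the normal form $\yt^{[m]}\x^{[n]}$, so $\cB$ is spanned in degree $d$ by at most $d+1$ elements, and the resulting surjection $\cB\twoheadrightarrow\cE$ is then an isomorphism by comparing graded dimensions.

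I expect the main obstacle to be the mixed relation and the braiding formula: both demand careful handling of the braided identification $(U_1\ot U_2)^*\simeq U_2^*\ot U_1^*$, which reverses orders and is the source of the particular placement of factors, together with the raising-factorial and Stirling-number identities needed to match coefficients. The purely-$x$ relation and the basis and dimension count are routine, but the bookkeeping that turns the complicated coproduct \eqref{comultiplication-y-n} into the compact relations \eqref{eq:rels-graded-dual} is where the real work lies.
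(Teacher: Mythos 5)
Your overall architecture coincides with the paper's proof (identify the generators with dual-basis functionals, verify \eqref{eq:rels-graded-dual} by pairing against the explicit coproducts of $\wtoba$, show the normal-ordered products recover the dual basis, finish with a degree-wise dimension count), but your identification of the generators is backwards, and this is a genuine error rather than a harmless relabeling. You set $\x^{[n]}=\alpha_{n,0}$ (dual to $x^n$) and $\yt^{[n]}=\alpha_{0,n}$ (dual to $y^n$); the paper takes exactly the opposite, $\x^{[n]}=\alpha_{0,n}$ and $\yt^{[n]}=\alpha_{n,0}$, and only that choice can satisfy the stated formulas. A convention-free way to see this: \eqref{eq:braiding-graded-dual} forces $c(\x^{[1]}\ot\x^{[1]})=\x^{[1]}\ot\x^{[1]}$, and since every element of $\cE^1\simeq V^*$ is $\Gamma$-homogeneous of the same degree, an element of $\cE^1$ has trivial self-braiding (for the Yetter--Drinfeld braiding or its inverse, whichever convention is used) if and only if it is $g$-invariant. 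Now $(g-\id)V=\ku x$ by \eqref{Eq:V_as_YD_module}, so the $g$-invariant functionals are exactly those vanishing on $\ku x$, i.e.\ the line $\ku\,\alpha_{0,1}$ spanned by the functional dual to $y$. Hence $\x^{[1]}$ must be $\alpha_{0,1}$, not $\alpha_{1,0}$: dualizing the unipotent action swaps the roles of $x$ and $y$, and that --- not merely the placement of tensor factors in $(U_1\ot U_2)^*\simeq U_2^*\ot U_1^*$ --- is the real content of the ``flip'' you flag as the main obstacle.

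The error is invisible in the two pure relations (both families $\{\alpha_{n,0}\}$ and $\{\alpha_{0,n}\}$ satisfy divided-power relations), but it breaks everything else. Granting the proposition with the correct identification (so $\yt^{[m]}\x^{[n]}=\alpha_{m,n}$, $\x^{[2]}=\alpha_{0,2}$, and $\x^{[1]}\yt^{[1]}=\yt^{[1]}\x^{[1]}+\x^{[2]}$), your claimed relation in degree two reads, through your labels, $\alpha_{1,0}\alpha_{0,1}=\alpha_{0,1}\alpha_{1,0}+\alpha_{2,0}$, i.e.\ $\alpha_{1,1}=(\alpha_{1,1}+\alpha_{0,2})+\alpha_{2,0}$, which is false; likewise your claim $\yt^{[m]}\x^{[n]}=\alpha_{n,m}$ fails, because $\alpha_{0,m}\alpha_{n,0}$ is an $\x\yt$-ordered product in the correct labeling and hence a sum of several dual-basis elements, not one. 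A second, related slip: you justify the divided-power relation for your $\x$'s by noting $\ku[x]\subseteq\wtoba$ is a braided Hopf subalgebra and ``dualizing''; but a Hopf subalgebra dualizes to a Hopf algebra \emph{quotient} of $\cE$, from which no relation inside $\cE$ can be read off. The divided-power \emph{subalgebra} of $\cE$ is dual to the Hopf algebra quotient $\wtoba\twoheadrightarrow\wtoba/(x)\simeq\ku[y]$, and it consists precisely of the $\alpha_{0,n}$ --- the paper's $\x^{[n]}$. Once the identification is corrected, the remaining steps of your outline (pairing against \eqref{comultiplication-y-n}, rewriting to the normal form $\yt^{[m]}\x^{[n]}$, and the graded dimension count) are exactly the paper's proof and go through.
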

\begin{proof}
Let $A$ be the algebra presented as above. It is graded with $\deg \x^{[n]} = \deg \yt^{[n]} = n$ for every $n$.
The elements 
\begin{align*}
\x^{[n]} &= \alpha_{0,n}& &\text{ and }& \yt^{[n]} &= \alpha_{n,0},& n&\in\N,
\end{align*}
of $\cE$ satisfy the relations \eqref{eq:rels-graded-dual}, hence we have a graded epimorphism $A\twoheadrightarrow \cE$. 
By dimension counting in each degree this is an isomorphism.
A direct calculation shows that $\yt^{[m]} \x^{[n]}  = \alpha_{m,n}$, hence $(\yt^{[m]} \x^{[n]})_{n,m\in\N_0}$ is a basis of $\cE$. 
The coproduct and braiding formulas follow  in a straightforward way.
\end{proof}

In the previous proof, the following coproduct formula in $\widetilde{\toba}$ is useful for computations.
Given  $n\in\N_0$, $\ell\in\I_{0, n}$, we have
\begin{align*}
&\Delta(x^{n-\ell} y^\ell) = \\
&\sum_{k=0}^{n-\ell}\sum_{t=0}^\ell\sum_{i=0}^t \binom{n-\ell}{k}
\binom{\ell}{t} \binom{t}{i}\frac{(-1)^i}{2^i} [t-\ell-2k]^{[i]} x^{n+i-\ell-k} y^{t-i}\ot x^k y^{\ell-t}.
\end{align*}

\bigbreak
Let $\mathfrak{G}(k,\ell) = \mathcal{G}(k,\ell)^*$; this is a post-Nichols algebra of $(W,c)$.

\begin{coro}\label{coro:divided-powers}
\begin{enumerate}[leftmargin=*,label=\rm{(\roman*)}] 
\item\label{item:divided-power1} If $\car \ku =0$, then $\cE\simeq \ttoba$ as braided Hopf algebras.

\item\label{item:divided-power2} If $\car \ku = p > 2$, then 	$\cE = \bigcup _{k, \ell \in \N}\mathfrak{G}(k,\ell)$.

\end{enumerate} 
\end{coro}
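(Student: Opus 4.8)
The plan is to handle the two characteristics separately, in both cases working directly from the presentation of $\cE$ in Proposition \ref{prop:graded-dual}. For \ref{item:divided-power1}, the first step is to notice that in characteristic $0$ the multiplicative relations $\x^{[n]}\x^{[m]} = \binom{n+m}{n}\x^{[n+m]}$ specialise, at $m=n-1$, to $\x^{[1]}\x^{[n-1]} = n\,\x^{[n]}$, and likewise for $\yt$. Solving recursively gives $\x^{[n]} = \tfrac1{n!}(\x^{[1]})^n$ and $\yt^{[n]} = \tfrac1{n!}(\yt^{[1]})^n$, so $\cE$ is generated as an algebra by $\x \coloneqq \x^{[1]}$ and $\yt \coloneqq \yt^{[1]}$. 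The only defining relation that survives this reduction is the one obtained from \eqref{eq:rels-graded-dual} at $n=m=1$, which I compute to be $\x\yt - \yt\x - \tfrac12\x^2 = 0$. I would then write the linear map $W \to \cE$, $u \mapsto \x$, $v \mapsto -\yt$, and check that it carries the defining relation $vu - uv - \tfrac12 u^2$ of $\ttoba$ precisely onto this relation, yielding an algebra isomorphism $\ttoba \xrightarrow{\sim} \cE$.

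It then remains to upgrade this to an isomorphism of braided Hopf algebras. Since both $\ttoba$ and $\cE$ are connected graded braided Hopf algebras generated in degree $1$, it is enough to verify that $u \mapsto \x$, $v \mapsto -\yt$ is an isomorphism of braided vector spaces $(W,c_W) \to (\cE^1,c)$; the degree-$1$ specialisation of \eqref{eq:braiding-graded-dual} matches $c_W$ after a short computation. Conceptually this is inevitable: in characteristic $0$ the Jordan plane $\wtoba$ is the Nichols algebra $\toba(V)$, so that $\cE = \toba(V)^* = \toba(V^*) = \toba(W) = \ttoba$, using the identification $W \simeq V^*$ coming from the description of $H^{*\operatorname{op}}$. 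I would present the explicit matching and cite this as the structural reason.

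For \ref{item:divided-power2} the plan is to realise each $\mathfrak{G}(k,\ell)$ as a graded braided Hopf subalgebra of $\cE$ and to show that these exhaust $\cE$. Dualising the morphism of graded braided Hopf algebras $\wtoba \twoheadrightarrow \mathcal{G}(k,\ell) = \wtoba/(y^{p^k}, x^{p^\ell})$ gives an injection $\mathfrak{G}(k,\ell) = \mathcal{G}(k,\ell)^* \hookrightarrow \wtoba^* = \cE$. Because the kernel of the surjection has PBW basis $\{x^i y^j : i \geq p^\ell \text{ or } j \geq p^k\}$, its annihilator, that is the image of $\mathfrak{G}(k,\ell)$ in $\cE$, is exactly $\operatorname{span}\{\alpha_{i,j} : 0\leq i < p^\ell,\ 0 \leq j < p^k\}$. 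For $k \leq k'$ and $\ell \leq \ell'$ these inclusions are compatible, being obtained by dualising the surjections $\mathcal{G}(k',\ell') \twoheadrightarrow \mathcal{G}(k,\ell)$ furnished by Lemma \ref{lemma:pre-Nichols-comparison-4}, so the $\mathfrak{G}(k,\ell)$ form a directed family of braided Hopf subalgebras of $\cE$. As $k,\ell$ range over $\N$, the spanning sets $\{\alpha_{i,j} : i < p^\ell,\ j < p^k\}$ cover the whole basis $(\alpha_{i,j})_{i,j\in\N_0}$ of $\cE$, whence $\cE = \bigcup_{k,\ell} \mathfrak{G}(k,\ell)$.

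The genuinely routine parts are the divided-power recursion and the braiding check in \ref{item:divided-power1}. The only point requiring care is the identification in \ref{item:divided-power2} of the image of $\mathfrak{G}(k,\ell)$ inside $\cE$, which rests on pinning down the PBW basis of the kernel of $\wtoba \twoheadrightarrow \mathcal{G}(k,\ell)$ exactly; once that span is determined, exhaustion is immediate. I therefore expect no serious obstacle here, the main subtlety being the bookkeeping of dual bases and the compatibility of the directed system.
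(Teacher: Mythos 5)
Your proof is correct and takes essentially the same route as the paper: for \ref{item:divided-power1} the paper also uses the divided-power recursion $\x^{[n]}=\tfrac1{n!}(\x^{[1]})^n$, $\yt^{[n]}=\tfrac1{n!}(\yt^{[1]})^n$ to reduce the presentation to the single Jordan relation and then checks that $\x^{[1]}\mapsto u$, $\yt^{[1]}\mapsto -v$ (your map read in the other direction) respects primitivity and the degree-one braiding. For \ref{item:divided-power2} the paper likewise dualizes $\wtoba\twoheadrightarrow\mathcal{G}(k,\ell)$ to embed $\mathfrak{G}(k,\ell)\hookrightarrow\cE$ and proves exhaustion by a PBW/dimension comparison in each degree $N$ with $\min\{p^k,p^\ell\}>N$, which is the same bookkeeping as your identification of the image with $\operatorname{span}\{\alpha_{i,j}: i<p^\ell,\ j<p^k\}$.
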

\begin{proof}
	\ref{item:divided-power1} In $\car \ku = 0$, $\x^{[n]} = \frac{1}{n!}(\x^{[1]})^n$ and $\yt^{[n]} = \frac{1}{n!} (\yt^{[1]})^n$ for every
	$n\in\N_0$. Hence $\cE$ is presented by the  elements $\x^{[1]}, \yt^{[1]}$ with relation 
	\begin{align*}
	\yt^{[1]}\x^{[1]} - \x^{[1]} \yt^{[1]} + \frac{1}{2} (\x^{[1]})^2 = 0.
	\end{align*}
	Then $\cE\simeq \ttoba$ via the isomorphism of algebras given by $\x^{[1]}\mapsto u$ and $\yt^{[1]}\mapsto -v$. This
	is actually an isomorphism of braided Hopf algebras since $\x^{[1]}$ and $\yt^{[1]}$ are primitive in $\cE$, and the
	braiding $c$ between them is given by
	\begin{align*}
	c(\x^{[1]}\ot \x^{[1]}) &= \x^{[1]}\ot \x^{[1]}, & c(\x^{[1]}\ot \yt^{[1]}) &= \yt^{[1]}\ot \x^{[1]}, \\
	c(\yt^{[1]}\ot \x^{[1]}) &= \x^{[1]} \ot (-\x^{[1]}+\yt^{[1]}), & c(\yt^{[1]}\ot \yt^{[1]}) &= \yt^{[1]}\ot (\yt^{[1]} - \x^{[1]}).
	\end{align*}
	Hence the claim follows.
	
	\ref{item:divided-power2} Since $\widetilde{\toba}\twoheadrightarrow  \mathcal{G}(k,\ell)$, then 
	$\mathfrak{G}(k,\ell) \hookrightarrow \cE$. Hence  $\bigcup \mathfrak{G}(k,\ell) \subseteq \cE$.
	Now since $\cE = \oplus_{n\in\N_0} \cE^n$, it is enough to show that for every $N\in\N_0$ there exist $k,\ell\in\N$ such that
	$\cE^N\subseteq \mathfrak{G}(k,\ell)$. Fix $N\in\N_0$ and take $k,\ell\in\N$ such that $\min\{p^k,p^\ell\} > N$. Then using the
	PBW basis of $\mathcal{G}(k,\ell)$ and $\widetilde{\toba}$,  $\mathcal{G}(k,\ell)^N$ is isomorphic as a vector space to $\widetilde{\toba}^N$ hence $\cE^N \simeq \mathfrak{G}(k,\ell)^N \subseteq \mathfrak{G}(k,\ell)$.
\end{proof}

\begin{coro}\label{coro:presentation-post-nichols}
Let $k, \ell \in \N$.
The braided Hopf algebra $\mathfrak{G}(k,\ell)$ is presented by generators $\x^{[n]}$, $\yt^{[m]}$, $n\in \I_{0,p^k-1}$, $m\in \I_{0,p^\ell-1}$, 
and relations \ref{eq:rels-graded-dual} for $n\in \I_{0,p^{k}-1}$, $m\in \I_{0,p^\ell-1}$; the comultiplication and braiding are given by \eqref{eq:coprod-graded-dual} and \eqref{eq:braiding-graded-dual};
the set $\{\yt^{[m]} \x^{[n]}\colon n\in \I_{0,p^k-1}, m\in \I_{0,p^\ell-1}\}$ is a basis of $\mathfrak{G}(k,\ell)$. 
\end{coro}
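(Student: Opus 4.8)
The plan is to realize $\mathfrak{G}(k,\ell)$ explicitly as a sub-braided-Hopf-algebra of $\cE$ and then read off everything from the presentation of $\cE$ in Proposition~\ref{prop:graded-dual}. Dualizing the surjection of braided Hopf algebras $q\colon \wtoba \twoheadrightarrow \mathcal{G}(k,\ell)$ produces the inclusion $\mathfrak{G}(k,\ell) = \mathcal{G}(k,\ell)^* \hookrightarrow \wtoba^* = \cE$ of Corollary~\ref{coro:divided-powers}\ref{item:divided-power2}, whose image is the annihilator $(\ker q)^\perp$ in the graded dual. First I would identify $\ker q = I$: since $x^{p^\ell}, y^{p^k}$ generate $I$, every PBW monomial $x^iy^j$ with $i\geq p^\ell$ (write $x^iy^j = x^{i-p^\ell}x^{p^\ell}y^j$) or with $j\geq p^k$ (write $x^iy^j = x^iy^{j-p^k}y^{p^k}$) lies in $I$; comparing with the PBW basis $\{x^iy^j : i\in\I_{0,p^\ell-1}, j\in\I_{0,p^k-1}\}$ of $\mathcal{G}(k,\ell)$ and counting dimensions in each degree forces $I = \operatorname{span}\{x^iy^j : i\geq p^\ell \text{ or } j\geq p^k\}$. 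Since $(\alpha_{i,j})$ is the dual basis of $(x^iy^j)$ and $\alpha_{m,n} = \yt^{[m]}\x^{[n]}$ by Proposition~\ref{prop:graded-dual}, this gives $\mathfrak{G}(k,\ell) = (\ker q)^\perp = \operatorname{span}\{\yt^{[m]}\x^{[n]} : m\in\I_{0,p^\ell-1}, n\in\I_{0,p^k-1}\}$, which is the claimed basis and yields $\dim\mathfrak{G}(k,\ell) = p^{k+\ell}$.

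In particular $\x^{[n]} = \alpha_{0,n}$ vanishes on $\mathcal{G}(k,\ell)$ for $n\geq p^k$ (as $y^n \in I$) and $\yt^{[m]} = \alpha_{m,0}$ vanishes for $m\geq p^\ell$; thus inside $\mathfrak{G}(k,\ell)$ we have $\x^{[n]} = 0$ for $n\geq p^k$ and $\yt^{[m]} = 0$ for $m\geq p^\ell$, and the finitely many surviving divided powers $\x^{[n]}$ ($n<p^k$), $\yt^{[m]}$ ($m<p^\ell$) generate, since each basis element $\yt^{[m]}\x^{[n]}$ is a product of two of them. Because $\mathfrak{G}(k,\ell)$ is a sub-braided-bialgebra of $\cE$, the relations \eqref{eq:rels-graded-dual}, the coproduct \eqref{eq:coprod-graded-dual} and the braiding \eqref{eq:braiding-graded-dual} simply restrict; every term on a right-hand side whose superscript exceeds the allowed range automatically drops out, its structure constant being forced to vanish (equivalently, $\binom{n+m}{n}\equiv 0 \bmod p$ whenever $n,m<p^k$ but $n+m\geq p^k$, by the carry characterization of Kummer/Lucas, while in the mixed relation $[-n]^{[j]} = 0$ for $j>n$ confines the sum to $j\leq n$). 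Hence the relations \eqref{eq:rels-graded-dual} with all indices in $\I_{0,p^k-1}$ and $\I_{0,p^\ell-1}$ hold in $\mathfrak{G}(k,\ell)$, so the abstract algebra $A$ presented by these generators and relations surjects onto $\mathfrak{G}(k,\ell)$.

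To finish I would bound $\dim A \leq p^{k+\ell}$ by rewriting. Using the mixed relation one moves every $\x$-generator to the right of every $\yt$-generator, and then the two monomial relations collapse the resulting consecutive like factors into a single $\x^{[n]}$, respectively $\yt^{[m]}$ (which becomes $0$ once the index reaches $p^k$, resp.\ $p^\ell$). Thus every word in the generators is a linear combination of the $p^{k+\ell}$ elements $\yt^{[m]}\x^{[n]}$ with $m<p^\ell$, $n<p^k$, so $\dim A\leq p^{k+\ell}$; combined with the surjection this gives $A\simeq \mathfrak{G}(k,\ell)$, and the coproduct and braiding descend from $\cE$. The \emph{main obstacle} is precisely this closure/spanning step: one must confirm that no relations beyond the truncated \eqref{eq:rels-graded-dual} are needed, i.e.\ that the divided-power structure constants of $\cE$ genuinely vanish modulo $p$ outside the index ranges so that the normal-form rewriting terminates inside $\{\yt^{[m]}\x^{[n]} : m<p^\ell,\, n<p^k\}$. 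Everything else is inherited formally from Proposition~\ref{prop:graded-dual} and Corollary~\ref{coro:divided-powers}.
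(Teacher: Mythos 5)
Your proof is correct and takes essentially the same approach as the paper: dualize the projection $\wtoba \twoheadrightarrow \mathcal{G}(k,\ell)$ to embed $\mathfrak{G}(k,\ell)$ into $\cE$, identify its image with the span of the $\yt^{[m]}\x^{[n]}$ in the stated ranges, and deduce the presentation by restricting the relations plus a dimension count --- the paper compresses all of this into ``the result follows''. One small repair to your parenthetical: in the mixed relation, the vanishing $[-n]^{[j]}=0$ for $j>n$ does not by itself eliminate the out-of-range terms with $j\le n$ but $n+j\ge p^k$ (e.g.\ $n=p^k-1$, $j=1$); those coefficients need Kummer/Lucas applied to $\binom{n+j}{j}$, or, more simply, your own ``forced to vanish'' argument, which already suffices since the image of $\pi^*$ is a subalgebra of $\cE$ spanned by the $\alpha_{m,n}$ with $m<p^\ell$, $n<p^k$.
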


\pf
The projection $\pi\colon \widetilde{\toba} \twoheadrightarrow \mathfrak{G}(k,\ell)$ induces 
$\pi^*\colon \mathfrak{G}(k,\ell) \hookrightarrow \cE$. Let $\beta_{m,n}$ the dual basis of 
$\{x^m y^n\colon n\in \I_{0,p^k-1}, m\in \I_{0,p^\ell-1}\}$. Then $\pi^*(\beta_{m,n}) = \yt^{[m]} \x^{[n]}$ and the result follows.
\epf


\begin{thebibliography}{ABFF}


\bibitem[AAH1]{aah-triang} N. Andruskiewitsch, I. Angiono and I. Heckenberger.
\emph{On finite GK-dimensional Nichols algebras over abelian groups}. Mem. Amer. Math. Soc., to appear.

\bibitem[AAH2]{AAH-jordan} \bysame.
\emph{Liftings of Jordan and super Jordan planes}. 
Proc. Edinb. Math. Soc., II. Ser., {\bf 61} (3)  661--672 (2018).

\bibitem[AAH3]{aah-findim} \bysame.
\emph{Examples of finite-dimensional Nichols algebras over abelian groups in positive characteristic}, Progress in Math., volume in honor of the 60-th 
birthday of N. Reshetikhin, to appear.

\bibitem[ABFF]{abff} N. Andruskiewitsch, D. Bagio, S. Della Flora and D. Flores. \emph{On the bosonization of the super Jordan plane}. Sao Paulo J. Math. Sci. \textbf{13} 1--26 (2019).


\bibitem[AD]{ad} N. Andruskiewitsch and J. Devoto. \emph{Extensions of Hopf algebras}.
 Algebra i Analiz, \textbf{7} (1)  22–61 (1995); St. Petersburg Math. J. \textbf{7} (1)  17--52 (1995).

\bibitem[B]{Beattie} M. Beattie. \emph{Duals of pointed Hopf algebras}. J.  Algebra \textbf{262}  54--76 (2003).


\bibitem[CLW]{clw} C. Cibils, A. Lauve and S. Witherspoon. \emph{Hopf quivers and Nichols algebras in positive characteristic}. 
Proc. Amer. Math. Soc. \textbf{137} (12)  4029--4041 (2009).

\bibitem[DT]{dt} Y. Doi and M. Takeuchi. \emph{Multiplication alteration by two-cocycles-The quantum version}. Comm. Algebra
\textbf{22} (14) 5715--5732 (1994).


\bibitem[H]{Hf} I. Hofstetter. \emph{Extensions of Hopf algebras and their cohomological description}.
J. Algebra \textbf{164} 264--298  (1994).

\bibitem[I]{iyudu} N. Iyudu. \emph{Representation spaces of the Jordan plane}. Comm. Algebra \textbf{42} (8), 3507--3540 (2014).



\bibitem[MR]{McRob}  J. C. McConnell and J. C. Robson. \emph{Noncommutative Noetherian Rings}. Graduate studies in
Mathematics, Volume 30.

\bibitem[Sch]{Sch} H.-J. Schneider. \emph{Some remarks on exact sequences of quantum groups}. Comm. Algebra   \textbf{21} (9) 3337--3358 (1993).

\bibitem[V]{Vay} C. Vay. \emph{On Hopf algebras with triangular decomposition}, Contemp. Math. \textbf{728} (2019), 181-199.

\bibitem[ZC]{ZC} Y. Zhang and H.-X. Chen.  \emph{Representations of Finite Dimensional Pointed Hopf Algebras Over $\mathbb Z_n$}, 
Comm. Algebra   \textbf{40} (10)  3801--3821 (2012).

\end{thebibliography}
\end{document}